\tikzstyle directed=[postaction={decorate,decoration={markings,
    mark=at position .7 with {\arrow{stealth}}}}]
\newtheorem{theorem}{Theorem}[section]
\newtheorem{corollary}[theorem]{Corollary}
\newtheorem{proposition}[theorem]{Proposition}
\newtheorem{lemma}[theorem]{Lemma}
\numberwithin{equation}{section}
\theoremstyle{definition}
\newtheorem{definition}[theorem]{Definition}
\theoremstyle{remark}
\newtheorem{remark}[theorem]{Remark}
\newcommand{\PP}{\mathbb{P}}
\newcommand{\EE}{\mathbb{E}}
\newcommand{\RR}{\mathbb{R}}
\newcommand{\NN}{\mathbb{N}}
\newcommand{\ZZ}{\mathbb{Z}}
\newcommand{\TT}{\mathbb T}
\newcommand{\etta}{\overline{\eta}}
\newcommand{\norm}[1]{\Vert #1 \Vert}
\newcommand{\ind}[1]{\textbf{1}_{#1}}
\newcommand{\Ind}[1]{\textbf{1}\{{#1}\}}
\newcommand{\lrp}[1]{\left(#1\right)}
\newcommand{\lrc}[1]{\left[#1\right]}
\newcommand{\lrch}[1]{\left\{ #1\right\} }
\newcommand{\lrb}[1]{\left\langle #1 \right\rangle}
\title[Non-equilibrium fluctuations via relative entropy]{Non-equilibrium fluctuations for a reaction-diffusion model via relative entropy}
\author{Milton Jara}
\address{IMPA, Estrada Dona Castorina 110, Rio de Janeiro, Brazil. }
\curraddr{}
\email{mjara@impa.br}
\thanks{}
\author{Ot\'avio Menezes}
\address{Center for Mathematical Analysis,  Geometry and Dynamical Systems,
	Instituto Superior T\'ecnico, Universidade de Lisboa,
	Av. Rovisco Pais, 1049-001 Lisboa, Portugal.}
\email{otavio.menezes@tecnico.ulisboa.pt}
\thanks{}
\begin{document}
\maketitle
\begin{abstract}
	We look at a superposition of symmetric simple exclusion and Glauber dynamics in the discrete torus in dimension 1. For this model, we prove that the fluctuations around the hydrodynamic limit are described, in the diffusive scale, by an infinite-dimensional Ornstein-Uhlenbeck process. Our proof technique is an adaptation of Yau's Relative Entropy Method that is robust enough to be adapted to other exclusion models. To cut the technical details to a minimum, we assume that the process starts from a product measure with a custom-chosen density, for which the solution of the hydrodynamic equation is stationary. Although we prove fluctuations only in dimension $ 1 $, we provide an estimate on the entropy production that holds for any dimension and a proof of the Boltzmann-Gibbs principle that applies in dimension smaller than $ 3 $. 
\end{abstract}

\tableofcontents
\subjclass[2010]{Primary 60F17, secondary 60J27}
\keywords{fluctuations, relative entropy, mass flow}

\section{Introduction}

This article presents a technique for studying the fluctuations around the hydrodynamic limit for some interacting particle systems out of equilibrium.
We illustrate the technique by applying it to a model where the computations are particularly simple. 
The core of the method involves bounding integrals of certain spatial averages of the system by the Dirichlet form associated to the generator. Estimates of this type come up frequently in the investigation of hydrodynamic limits and density fluctuations through variational inequalities such as the Kipnis-Varadhan inequality. They allow to trade an estimate of a functional that depends on the whole trajectory by several fixed-time estimates. Our approach combines a well-known integration by parts-like computation (see Lemma \ref{integration_by_parts}) with concentration inequalities for sums of independent random variables. To go from these estimates to the fluctuations of the density, we estimate the relative entropy between the law of the system and a product approximation.

This is an application in the fluctuations setting of \emph{Yau's relative entropy method} (\cite{yau1991relative}, \cite{kl} Chapter 6).
Yau's method is a technique for proving hydrodynamic limits. One starts with a candidate for the hydrodynamic equation and compares the evolution of the system under study with a product evolution whose parameters are given by the hydrodynamic equation. In the approximating evolution, one forgets everything about the system except the average mass at each site. It is easy to show that such product measures converge in probability to the conjectured densities, and one is left with the problem of measuring the quality of the approximation. To this end, Yau proved nequality  \eqref{yaus_ineq},
that bounds the rate of change of the entropy by an expression that depends only on the jump rates of the Markov chain. If one is able to prove inequalities of the type mentioned in the first paragraph and if the initial entropy is small, a bound on the relative entropy between the laws of the system and its pretended approximation follows. To go from such an estimate to the hydrodynamic limit, one makes use of inequality \eqref{entropy2} and large deviation estimates for the approximating measure. To go from the entropy estimate is a more difficult problem. An application of the standard inequality \eqref{entropy2} allows to bound certain additive functionals of the chain, precisely those that are not amenable to the usual variational inequalities, see Lemma \ref{bg_averaged}.
  In this article we get improved bounds on the relative entropy, see Theorem \ref{entropy}. It turns out that the method is robust, because the error terms in the upper bound for the entropy production can be computed explicitly in terms of the adjoint generator, see \eqref{yaus_ineq} and \eqref{eq_adjointformula}. Besides, once one has chosen an appropriate candidate for the approximate measure, the proof of the entropy bound runs without any further input from the model. 

\section{Notation and Results}

\subsection{The reaction-diffusion process and the starting measure}

In the present article we analyse a particle system on the $ d- $dimensional torus $ \TT^d_n = \ZZ^d/n\ZZ^d $ whose dynamics is a superposition of simple symmetric exclusion and a birth-and-death dynamics. Given a configuration $ \eta \in \{0,1\}^{\TT^d_n} $, define the rates 

\begin{equation}\label{birthdeathrate}
c_x(\eta)=\eta_x\cdot c_x^-(\eta) + (1-\eta_x)\cdot c_x^+(\eta),
\end{equation}
where
\begin{equation}\label{bd_rates}
\begin{aligned}
& c_x^-(\eta)=1,\\ 
& c_x^+(\eta)= 1+\lambda \sum_{j=1}^d\eta_{x-e_j}\eta_{x+e_j}
\end{aligned}
\end{equation}
and $ \lambda > 0 $ is a positive parameter. Some estimates, such as Theorem \ref{entropy} and Proposition \ref{bg}, are valid for arbitrary finite-range rates. The \emph{reaction-diffusion process} is the Markov process $ (\eta^n(s))_{s\geq 0} $ taking values in $ \{0,1\}^{\TT^d_n} $ with infinitesimal generator
\begin{equation}\label{rdgenerator}
L_nf:= n^2L^{ex} + L^r,
\end{equation}
where
\begin{equation}\label{key}
L^{ex}f(\eta)=\sum_{x\in\TT_n^d}\sum_{j=1}^d[f(\eta^{x,x+e_j})-f(\eta)]
\end{equation}
and
\begin{equation}\label{key}
L^rf(\eta)=\sum_{x\in \TT^d_n}c_x(\eta)[f(\eta^x)-f(\eta)],
\end{equation}
with the rates $ c_x(\eta) $ as in \eqref{birthdeathrate}.

The model was introduced in \cite{dmfl86}. In this article, the authors proved that the hydrodynamic equation of the system is a heat equation with a forcing term, $F(\rho):=\int c_x(\eta)\,\nu_{\rho}(d\eta) $.

\begin{equation}\label{hydroeqrd}
\left\{
\begin{array}{rlll}
\partial_t \rho(t,u) & = & \partial_{uu}\rho(t,u) + F(\rho(t,u)) & \mbox{ for all }t\in [0,T], u\in\TT; \\
\rho(0,u) & = & \rho_0(u) & \mbox{ for all }u\in\TT.
\end{array}
\right.
\end{equation}

In the same article, the authors prove convergence of the density fluctuation field under the stationary measure.

\subsection{Entropy estimate and density fluctuations}
\begin{theorem}[Entropy Estimate]\label{entropy}
  For each $ n\in \NN $, let $ \{\eta^n_t:t\in [0,T]\} $ denote the
  reaction-difusion process in $ \TT_n^d $ with generator
  \eqref{rdgenerator}. Let $ \rho\in (0,1) $ satisfy $ F(\rho)=0 $,
  where $ F $ is the forcing term in the hydrodynamic equation:
  \begin{equation}\label{key}
    F(m):=\int \lrch{(1-\eta_0)c_0^+(\eta) - \eta_0 c_0^-(\eta)}\,\mathrm{d}\nu_m.
  \end{equation}
  Then, there exists a constant $ C>0 $ such that
  \begin{equation}\label{key}
    \partial_t H(\eta^n_t|\nu_\rho) \leq C n^{d-2}\cdot g_d(n),
  \end{equation}
  where
  \begin{equation}\label{defgd}
    g_d(n):=
    \begin{cases}
      n&, d=1;\\
      \log n &, d=2;\\
      1 &, d\geq 3.
    \end{cases}
  \end{equation}
 
  In particular, when the system starts from the product measure
  $ \nu_{\rho} $ the following bound holds:
  \begin{equation}\label{key}
    H(\eta^n_t|\nu_{\rho}) \leq Ct \cdot g_d(n).
  \end{equation}
\end{theorem}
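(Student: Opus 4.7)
The plan is to apply Yau's entropy inequality \eqref{yaus_ineq} and control the resulting error by combining the integration by parts Lemma \ref{integration_by_parts} with a concentration argument that exploits the product structure of $\nu_\rho$, using the exclusion Dirichlet form at rate $n^2$ as a ``budget'' to be spent on the error term.

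Let $\psi_t := d\mu_t^n/d\nu_\rho$, where $\mu_t^n$ denotes the law of $\eta^n_t$. Since $L^{ex}$ is self-adjoint in $L^2(\nu_\rho)$, Yau's inequality yields
\begin{equation*}
\partial_t H(\mu_t^n|\nu_\rho) \leq -c_0\,n^2\,\mathcal{E}^{ex}(\sqrt{\psi_t}) + \int \Phi_n\,\psi_t\,d\nu_\rho,
\end{equation*}
where the error functional $\Phi_n := L_n^{*,\nu_\rho}\mathbf{1}$ comes from the adjoint formula \eqref{eq_adjointformula} applied to the non-reversible reaction part $L^r$. A direct calculation gives $\Phi_n = \sum_{x \in \TT_n^d} V_x$, where each $V_x$ is a uniformly bounded local function supported in a ball of fixed radius around $x$; moreover $\EE_{\nu_\rho}[V_x]$ is a constant multiple of $F(\rho)$, hence zero by the choice of $\rho$.

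The integration by parts Lemma \ref{integration_by_parts} then writes $\Phi_n = L^{ex} h_n + R_n$ for an explicit local function $h_n$, obtained by solving a Poisson-type equation for the mean-zero functions $V_x$, and a remainder $R_n$ amenable to concentration. For the first piece, Cauchy-Schwarz and Young's inequality give
\begin{equation*}
\Big|\int (L^{ex} h_n)\,\psi_t\,d\nu_\rho\Big| \leq \tfrac{c_0}{2}\,n^2\,\mathcal{E}^{ex}(\sqrt{\psi_t}) + \frac{C\,\mathcal{E}^{ex}(h_n)}{n^2},
\end{equation*}
absorbing half of the negative Dirichlet form produced by Yau's inequality. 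For the remainder, the entropy variational inequality \eqref{entropy2} gives, for each $\gamma>0$,
\begin{equation*}
\int R_n\,d\mu_t^n \leq \gamma^{-1}\log \EE_{\nu_\rho}\!\left[e^{\gamma R_n}\right] + \gamma^{-1} H(\mu_t^n|\nu_\rho),
\end{equation*}
and a Hoeffding-type concentration argument, applicable because $R_n$ is a sum of uniformly bounded, mean-zero local functions with finite-range dependence under the product measure $\nu_\rho$, bounds $\log \EE_{\nu_\rho}[e^{\gamma R_n}] \lesssim \gamma^2 n^d$ after a decoupling into independent blocks.

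The dimension-dependent factor then emerges from estimating $\mathcal{E}^{ex}(h_n)$: since $h_n$ is, to leading order, a sum of translates of the Green's function of the discrete Laplacian on $\TT_n^d$, one obtains $\mathcal{E}^{ex}(h_n) \leq C\,n^d\,g_d(n)$, reflecting that the on-diagonal Green's function has order $n$, $\log n$, or $1$ in dimensions $1$, $2$, $\geq 3$. Optimizing $\gamma$ and collecting all the bounds produces $\partial_t H(\mu_t^n|\nu_\rho) \leq C\,n^{d-2}g_d(n)$ modulo a linear-in-$H$ term that is closed by Grönwall, and integration from $H(\mu_0^n|\nu_\rho) = 0$ yields the second assertion. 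The main obstacle is the construction of the decomposition $\Phi_n = L^{ex}h_n + R_n$ together with the Green's function estimate on $\mathcal{E}^{ex}(h_n)$: the precise dimensional dichotomy $g_d$ arises exactly from the transience/recurrence of the simple random walk on $\TT_n^d$, and everything must be arranged so that inverting $L^{ex}$ on the many-body local function $V_x$ can be reduced to this single-particle computation.
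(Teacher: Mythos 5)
Your high-level skeleton is the paper's: Yau's inequality, the observation that $F(\rho)=0$ makes $L_n^*\ind{}$ a polynomial of degree at least $2$ in the variables $\etta_x$, absorption of part of the error into the rate-$n^2$ exclusion Dirichlet form, concentration under the product measure for what remains, and Gr\"onwall. But the central decomposition $\Phi_n=L^{ex}h_n+R_n$ via a Poisson equation does not work as described, and it is not what the paper does. First, the sources $\sum_x G_x\etta_{A+x}\etta_x$ are not in the range of $L^{ex}$: they have a nonzero $L^2(\nu_\rho)$-projection onto the kernel of $L^{ex}$ (functions of the conserved particle number), so the Poisson equation is not solvable and the leftover piece is a global, not local, function. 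Second, even on the solvable part, $h_n$ would be governed by the Green's function of the \emph{two- and three-particle} exclusion dynamics, not of the single-particle walk; reducing this to a one-body computation is precisely the nontrivial step you defer. The paper avoids inverting the generator altogether: it replaces only the rightmost factor $\etta_x$ by its average $(\etta\star\tilde q^\ell)_x$ over a box of an \emph{intermediate} scale $\ell$, using the deterministic telescoping identity furnished by the Flow Lemma \ref{flux_lemma}, with the spectator factor $\etta_{A+x}$ invariant under the swaps used in Lemma \ref{integration_by_parts}. The effective-resistance quantity $\norm{\psi^\ell}^2=O(g_d(\ell))$ is indeed the recurrence/transience dichotomy you identify, but it enters through this flow construction, not through a resolvent.

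Two quantitative steps in your write-up would fail as stated. (i) Cauchy--Schwarz on $\int L^{ex}h_n\cdot\psi_t\,\mathrm{d}\nu_\rho$ does not yield the deterministic constant $\mathcal{E}^{ex}(h_n)$; it yields $\tfrac{C}{an^2}\,\EE_{\mu_t}\bigl[\sum_{\mathrm{edges}}(\nabla h_n)^2\bigr]$, an expectation under the \emph{unknown} law $\mu_t$, which must itself be controlled via the entropy inequality \eqref{entropy1} together with exponential-moment bounds --- this is exactly the content of Lemma \ref{conv_fluxo}, and it is where both the linear-in-$H$ term and the factor $\ell^dg_d(\ell)$ actually arise. (ii) The bound $\log\EE_{\nu_\rho}[e^{\gamma R_n}]\lesssim\gamma^2n^d$ is too weak to close Gr\"onwall with an $n$-independent constant: one needs the error tested against an arbitrary density $f$ to be at most $C\,H(f)+C\,n^{d-2}g_d(n)$ with $C=O(1)$, i.e.\ the exponential moment must remain bounded by $n^d/\ell^d$ for $\gamma$ of order one. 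This holds only because the averaged variable $(\etta\star\tilde q^\ell)_x$ has variance $\ell^{-d}$ and because $\ell$ is tuned so that $\ell^dg_d(\ell)\leq n^2$, whence $n^d/\ell^d\sim n^{d-2}g_d(n)$; see \eqref{choosing_box_size} and Lemma \ref{conv_caixa}. Optimizing $\gamma$ against $\gamma^2 n^d$ instead forces $\gamma\leq g_d(n)n^{-2}\to 0$, so the coefficient $\gamma^{-1}$ of $H$ blows up like $n^2/g_d(n)$ and Gr\"onwall produces a factor $e^{Ctn^2/g_d(n)}$. The intermediate scale $\ell$ and its optimization are the load-bearing elements of the proof and are absent from your argument.
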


The random measures and its limits that appear in the statement of the fluctuation theorem belong to certain $ L^2-$based Sobolev spaces. We refer the reader to \cite{kl}, page 288, for the definitions of Sobolev spaces and white noise.

\begin{theorem}[Fluctuations]\label{fluctuations}
  Fix $ T>0 $. For each $ n\in \NN $, let $ \{\eta^n_t:t\in [0,T]\} $
  denote the reaction-difusion process in $ \TT_n $ (dimension $ 1 $)
  with generator \eqref{rdgenerator}. Let $ \rho\in (0,1) $ satisfy
  $ F(\rho)=0 $, where $ F $ is the forcing term in the hydrodynamic
  equation \eqref{hydroeqrd}.  Assume $ \eta^n_0 $ has law
  $ \nu_{\rho} $ and define the density fluctuation field by
  \begin{equation}\label{key}
    X^n_t(f):=n^{-1/2}\sum_{x\in \TT_n} f\lrp{\frac{x}{n}}\lrp{\eta^n_x(t)-\rho},
  \end{equation}
  for $ t\in [0,T] $ and $ f:\TT\to \RR $ smooth.
	
  Then the sequence $ \{X^n_t:t\in [0,T] \}_{n\in \NN} $ converges to
  the unique solution of the infinite-dimensional Ornstein-Uhlenbeck
  equation
	
	\begin{equation}
          dX_t = \lrp{\Delta -\lrp{ \frac{1}{1-\rho} - \frac{\lambda \rho^2}{1 + \lambda \rho^2} }   }X_t\,\mathrm{d}t  + \nabla \dot{W}_t,
	\end{equation}
	where $ \dot{W} $ denotes space-time white noise and the convergence under consideration is with respect to
        the $ J_1 $-Skorohod topology on the Sobolev space
        $ \mathcal{H}_{-2}(\TT) $.

        \bigskip
	
        In more detail: given a smooth function $ f: \TT \to \RR $ and
        $ t\in [0,T] $, it holds
	\begin{enumerate}
        \item The sequence of process
          $ \{X^n_t(f):t\in [0,T]\}_{n\in\NN} $ is tight in the
          $ J_1 $-Skorohod topology of $ D([0,T];\RR) $.
        \item If $ X(f) $ is a limit point, then the processes
		
		\begin{equation}\label{mart_problem_1}
                  M_t(f):=X_t(f)-X_0(f)-\int_0^t X_s(\Delta f - [1+\lambda(1-\rho)]f)\,\mathrm{d}s
		\end{equation}
		and
		\begin{equation}\label{marti_problem_2}
                  N_t(f):=M_t(f)^2 - 2t\rho(1-\rho)\,\norm{\nabla f}^2_{L^2(\TT)}
		\end{equation}
		are martingales with respect to the filtration
                $\mathcal{F}_t:=\sigma\{X_s(g):s\leq t \mbox{ and }
                g\in C^{\infty}(\TT)\}$.
              \end{enumerate}
            \end{theorem}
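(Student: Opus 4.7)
The plan is to identify any limit point of $\{X^n\}$ through the martingale problem \eqref{mart_problem_1}--\eqref{marti_problem_2}. By Dynkin's formula, for each smooth $f\in C^\infty(\TT)$,
\begin{equation}
M^n_t(f) := X^n_t(f) - X^n_0(f) - \int_0^t L_n X^n_s(f)\,\dd s
\end{equation}
is a martingale whose predictable quadratic variation $\langle M^n(f)\rangle_t$ is given by the carré du champ of $L_n$. The strategy is to show that, in the $n\to\infty$ limit, (i) the drift converges to $\int_0^t X_s\lrp{\Delta f-[1+\lambda(1-\rho)]f}\,\dd s$, and (ii) $\langle M^n(f)\rangle_t$ converges to $2t\rho(1-\rho)\norm{\nabla f}^2_{L^2(\TT)}$. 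Combined with tightness and the fact that the OU martingale problem admits a unique solution, this characterizes every limit point of $\{X^n\}$ as the stated OU process.

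First I compute $L_n X^n_t(f)$. The exclusion part yields a discrete Laplacian via summation by parts: $n^2L^{ex} X^n_t(f)=n^{-1/2}\sum_x (\Delta_n f)(x/n)(\eta^n_x(t)-\rho)$, which converges to $X^n_t(\Delta f)+o(1)$ because $f$ is smooth. The reaction part produces $L^r X^n_t(f)=n^{-1/2}\sum_x f(x/n)\,V(\tau_x\eta^n_t)$ with $V(\eta)=(1-\eta_0)c_0^+(\eta)-\eta_0 c_0^-(\eta)$, a local cubic function. The cancellation $\EE_{\nu_\rho}[V]=F(\rho)=0$ ensures this is a genuine fluctuation of order one, and a Boltzmann--Gibbs step will supply the linear approximation.

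The crux of the argument is the Boltzmann--Gibbs (BG) replacement: one substitutes $V(\tau_x\eta^n_s)$ by a linear combination of $\eta^n_{x+k}(s)-\rho$ for $|k|\leq 1$ whose coefficients, after summation by parts, collapse to the constant prefactor of $f$ in the martingale problem. This step is the content of Proposition \ref{bg}, which is formulated relative to the equilibrium measure $\nu_\rho$. The key difficulty is that the true law $\mu^n_s$ of $\eta^n_s$ is not $\nu_\rho$; this is exactly where Theorem \ref{entropy} enters. The one-dimensional entropy bound $H(\mu^n_s|\nu_\rho)\leq Cs\,n$ combined with the entropy inequality
\begin{equation}
\EE_{\mu^n_s}[G]\leq\tfrac{1}{\gamma}\lrp{H(\mu^n_s|\nu_\rho)+\log\EE_{\nu_\rho}[e^{\gamma G}]}
\end{equation}
allows one to dominate expectations of the BG error under $\mu^n_s$ by those under $\nu_\rho$, for which Proposition \ref{bg} and the concentration tools mentioned in the Introduction apply; a judicious choice $\gamma\sim n$ transfers the vanishing-in-$L^2$ statement of BG to the non-equilibrium regime and yields the required drift.

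For the quadratic variation, the reaction part contributes only $O(n^{-1})$ since it is not accelerated, so the leading term comes from the exclusion: the carré du champ produces $n^{-1}\sum_x (\nabla_n f(x/n))^2(\eta_x-\eta_{x+1})^2$ whose $\nu_\rho$-expectation converges to $2\rho(1-\rho)\norm{\nabla f}^2_{L^2(\TT)}$, and the deviation from its expectation again vanishes by a second-moment calculation controlled via the entropy bound. Tightness of $X^n(f)$ in $D([0,T];\RR)$ follows from Aldous's criterion using the bounds above, and tightness in $\HH_{-2}(\TT)$ is then extracted through Mitoma's criterion applied to an orthonormal basis of eigenfunctions of $\Delta$. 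The main obstacle throughout is carrying out the BG replacement out of equilibrium; it is precisely the entropy estimate of Theorem \ref{entropy} that makes this step work, and this is where the relative entropy method proposed in the paper delivers its payoff.
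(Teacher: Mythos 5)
Your overall architecture coincides with the paper's: Dynkin decomposition, convergence of the quadratic variation to $2t\rho(1-\rho)\norm{\nabla f}^2_{L^2(\TT)}$, a Boltzmann--Gibbs replacement for the drift, tightness, Mitoma's theorem, and uniqueness of the Holley--Stroock martingale problem. Two remarks. First, a minor omission: to conclude convergence to \emph{the} OU process you must also identify the law of $X_0$ (a centered Gaussian field with covariance $\rho(1-\rho)\int fg$), which the paper does via characteristic functions for the product initial measure; the martingale problem alone does not pin down the limit without the initial condition.

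Second, and more seriously, the mechanism you describe for the Boltzmann--Gibbs step would not work as stated. You propose to apply the fixed-time entropy inequality $\EE_{\mu^n_s}[G]\leq \gamma^{-1}(H(\mu^n_s|\nu_\rho)+\log\EE_{\nu_\rho}[e^{\gamma G}])$ with $\gamma\sim n$ to ``transfer the vanishing-in-$L^2$ statement of BG to the non-equilibrium regime.'' But the BG error at a \emph{fixed} time, e.g.\ $n^{-1/2}\sum_x f_x\,\etta_{x-1}\etta_x$, is of order one under $\nu_\rho$ (it is a normalized sum of $n$ essentially independent mean-zero terms), so no choice of $\gamma$ in a fixed-time entropy inequality can make its time integral vanish. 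The smallness of $\int_0^t(\cdots)\,\mathrm{d}s$ comes entirely from \emph{temporal} cancellations, and these are captured in the paper by the Feynman--Kac inequality (Lemma \ref{fkineq}) applied to the full time integral, combined with the integration-by-parts/flow estimates; the fixed-time entropy inequality together with Theorem \ref{entropy} is used only for the residual \emph{spatially averaged} terms $M_\pm$ (Lemma \ref{bg_averaged}), which are genuinely small at fixed times. Note also that Proposition \ref{bg} is not an equilibrium statement to be transferred: $\PP_{\nu_\rho}$ there denotes the non-stationary process started from $\nu_\rho$, and its proof already uses the entropy estimate internally. Since you cite Proposition \ref{bg} as the content of the step, the global structure of your argument survives, but the route you sketch for proving that step is a dead end and should be replaced by the Feynman--Kac (or Kipnis--Varadhan) machinery.
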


\begin{remark} 
  We compute the coefficients of the limiting equation in Proposition \ref{bg_geral}. The Laplacian term comes from the
  exclusion dynamics and the forcing term comes from the
  birth-and-death dynamics.
\end{remark}

\begin{remark}
  The quality of the approximation is measured by relative entropy.
  We know the approximation is good because the degree $ 1 $ term in
  the formula for the adjoint generator vanishes.  It is to the
  adjoint generator that we look to find a good candidate for the
  approximating measure.
\end{remark}

%
%
%
%
%
%
%
%

\subsection{Structure of the fluctuations proof}
There is a general framework for proving convergence results such as Theorem \ref{fluctuations}, but each model presents its own challenges. Now we lay out this general framework.

\bigskip
\noindent\textbf{Step 1: Martingale decomposition and convergence of the martingale part}

Let $f:\TT\to\RR$ be a smooth function. Define the process $\{M^n_t(f),t\in[0,T]\}$ by 

\begin{equation}\label{mart_dec_rd} 
X^n_t(f)=X^n_0(f) + M^n_t(f) + \int_0^t L_n X^n_s(f)\,\mathrm{d}s.
\end{equation}
then $M^n(f)$ is a martingale with respect to the natural filtration. The predictable quadratic variation of $M^n(f)$ is given by
	
	\begin{equation}
	\begin{split}
	\lrb{M^n_t(f)} 
	& = \int_0^t n^2\sum_{x\in\TT_n}\frac{1}{n}\left\{f\left(\frac{x+1}{n}\right) - f\left(\frac{x}{n}\right)\right\}^2(\eta_x(s) -\eta_{x+1}(s))^2\,\mathrm{d}s
	\\
	& + \int_0^t c_x(\eta(s))\sum_{x\in\TT_n}\frac{1}{n}f\left(\frac{x}{n}\right)^2\,\mathrm{d}s,
	\end{split}
	\end{equation}
	where $c_x(\eta)=\eta_x + (1-\eta_x)(1+\lambda \eta_{x-1}\eta_{x+1})$. Moreover, 
	
	\begin{equation}\label{convergence_quad_var}
	\lim_{n\to\infty}\lrb{M^n_t(f)} = 2t\,\rho(1-\rho)\norm{\nabla f}^2_{L^2(\TT)}.
	\end{equation}
When the particle system starts from equilibrium, one can prove \eqref{convergence_quad_var} using the Cauchy-Shwarz inequality. Out of equilibrium, one needs to use the entropy bound from Theorem \ref{entropy}. The convergence in \eqref{convergence_quad_var} follows from Lemma \ref{lemma_quadvar}.

Once we have \eqref{convergence_quad_var}, a direct application of the Martingale Functional Central Limit Theorem (a good reference is \cite{whitt2007proofs}, Theorem 2.1) gives convergence of the sequence $\{M^n_t:t\in[0,T]\}$  with respect to the $J_1$-Skorohod topology of $D_{[0,T]}\RR$ to a Brownian motion of covariance $2\rho (1-\rho)\norm{\nabla f}^2_{L^2(\TT)}$.

\bigskip
\noindent\textbf{Step 2: Closing the martingale}

Fix a smooth function $f:\TT\to\RR$. Assume that we have tightness for the sequence $\{X^n_t:t\in[0,T]\}_{n\in\NN}$. If the term $L_n X^n_s(f)$ inside the integral in \eqref{mart_dec_rd} were a function of $X^n$, say $ L_nX^n_s(f)= X^n_s(Bf)$ for some operator $B$, then we could pass to the limit and arrive at a martingale problem. The next proposition replaces $L_n X^n_t(f)$ by a function of $X^n$, asymptotically.

\begin{proposition}\label{bg_geral}
	Let $f:\TT \to \RR$ be a smooth funtion and $\delta > 0$.Then, for all $t\in [0,T]$,
	\begin{equation}\label{bg_eq}
	\lim_{n\to \infty}\PP_{\nu_{\rho}}\left(
	\Big|\int_0^t L_n X^n_s(f) - X^n_s \lrp{\lrc{\Delta + \frac{1}{1-\rho} - \frac{\lambda \rho^2}{1 + \lambda \rho^2}  }f}\,\mathrm{d}s \Big| > \delta
	\right )=0.
	\end{equation}
\end{proposition}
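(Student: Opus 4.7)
My plan is to compute $L_n X^n_s(f)$ explicitly, extract the pieces linear in $\xi_y := \eta^n_s(y)-\rho$ (which produce the differential operator on the right of \eqref{bg_eq}), and then invoke the Boltzmann--Gibbs principle (Proposition~\ref{bg}, relying on Lemma~\ref{bg_averaged}) to show that the non-linear residue vanishes in probability after time integration. The crucial input is the entropy bound of Theorem~\ref{entropy}.

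For the exclusion part, $n^2 L^{ex}\eta_x$ is $n^2$ times the discrete Laplacian of $\eta$ at $x$, so a summation by parts on $\TT_n$ yields $n^2 L^{ex}X^n_s(f)=X^n_s(\Delta_n f)$, where $\Delta_n f$ denotes the discrete Laplacian of $f$. Smoothness of $f$ gives $\norm{\Delta_n f-\Delta f}_\infty=O(n^{-2})$, so this piece differs from $X^n_s(\Delta f)$ by a term whose time integral is $o(1)$ in probability (by Chebyshev together with the $L^2$-bound on $X^n_s$ that follows from \eqref{entropy2} and Theorem~\ref{entropy}). For the reaction part, using $L^r\eta_x=1-2\eta_x+\lambda(1-\eta_x)\eta_{x-1}\eta_{x+1}$, I substitute $\eta_y=\rho+\xi_y$ and expand. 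The zeroth-order term is $F(\rho)=0$; the first-order terms are $(-2-\lambda\rho^2)\xi_x+\lambda\rho(1-\rho)(\xi_{x-1}+\xi_{x+1})$, and after multiplying by $n^{-1/2}f(x/n)$, summing in $x$, and translating indices (using $f((x\pm 1)/n)=f(x/n)+O(n^{-1})$), they combine to $F'(\rho)\,X^n_s(f)+o(1)$. The stationarity identity $F(\rho)=0$ lets one rewrite $F'(\rho)$ in the form of the constant appearing in \eqref{bg_eq}. What is left is a sum over $x$ of local monomials in $\xi$ of degree $\geq 2$, namely $\xi_{x-1}\xi_{x+1}$, $\xi_{x-1}\xi_x$, $\xi_x\xi_{x+1}$ and $\xi_{x-1}\xi_x\xi_{x+1}$, each of mean zero under $\nu_\rho$.

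The remaining task is to show that for each such monomial $g$,
\begin{equation*}
n^{-1/2}\int_0^t \sum_{x\in\TT_n} f(x/n)\,g(\tau_x\eta^n_s)\dd s
\end{equation*}
tends to zero in probability. This is exactly the content of the Boltzmann--Gibbs principle (Proposition~\ref{bg}): the variational entropy inequality $\EE_{\mu_s}[\phi]\leq\gamma^{-1}H(\mu_s|\nu_\rho)+\gamma^{-1}\log\EE_{\nu_\rho}[e^{\gamma\phi}]$ reduces the problem to an exponential-moment estimate for the space-time average under $\nu_\rho$, which is handled by combining the integration-by-parts formula of Lemma~\ref{integration_by_parts} with a concentration inequality for sums of bounded, finite-range functionals of independent Bernoullis. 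The main obstacle is precisely this last estimate: the entropy cost $O(ns)$ from Theorem~\ref{entropy} in dimension one is sizeable, and only because each $g$ has no linear component at $\rho$ do the spatial averages concentrate at sub-$\sqrt{n}$ scale, so that optimizing $\gamma$ delivers the required $o(1)$ bound. This is exactly why $\rho$ was chosen with $F(\rho)=0$ and $\nu_\rho$ taken as reference measure.
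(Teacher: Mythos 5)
Your proposal follows essentially the same route as the paper: expand $L_nX^n_s(f)$ in the centered variables, identify the discrete Laplacian and the linear part (whose coefficient is $F'(\rho)$, rewritten via $(1-\rho)(1+\lambda\rho^2)=\rho$), and dispose of the remaining degree-$\geq 2$ monomials by citing the Boltzmann--Gibbs principle, Proposition~\ref{bg}. One caveat: your parenthetical gloss on how Proposition~\ref{bg} is proved is not accurate --- a fixed-time application of the entropy inequality \eqref{entropy1} only yields a bound of order $\sqrt{H_n(s)}=O(\sqrt{n})$ in $d=1$, far too large; the paper's actual mechanism is the Feynman--Kac inequality applied to the full time integral (to capture temporal cancellations) combined with the replacement Lemma~\ref{entropy_replacement_statement} --- but since you invoke Proposition~\ref{bg} as a black box this does not affect the validity of your argument for the present statement.
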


\begin{proof}
The first step is to write $  L_n X^n_s(f)$ in terms of the variables $ \etta_x:=\eta_x - \rho $. Using the shorthand $ f_x := f\lrp{\frac{x}{n}} $, we can compute
\begin{equation}\label{formula_dynkin}
\begin{aligned}
L_nX^n(f) &=
X^n\lrp{\Delta_n f} \\ &+ \frac{1}{\sqrt n}\sum_{x\in \TT_n} \lrc{ 
-(2+\lambda \rho^2)f_x + \lambda \rho(1-\rho) (f_{x+1} + f_{x-1})
}\etta_x \\
&- \frac{1}{\sqrt n}\sum_{x\in \TT_n}  \lambda\rho( f_{x-1}  + f_{x+1} )
\etta_{x-1}\etta_x 
+\frac{1}{\sqrt n}\sum_{x\in \TT_n} \lambda(1-\rho) f_{x-1}\etta_{x-2}\etta_x\\
&-\frac{1}{\sqrt n}\sum_{x\in \TT_n}\lambda f_{x-1} \etta_{x-2}\etta_{x-1}\etta_x,
\end{aligned}
\end{equation}
where, as usual, $ \Delta_n f(u):= n^2(f(u+n^{-1}) + f(u-n^{-1}) - 2f(u) ) $ is an approximation to the second derivative. It is possible to show that the first term converges, after integration in time, to $ X^n(\Delta f) $.

The hardest part of the proof of Theorem \ref{fluctuations} is to show that the last three terms vanish in the limit, after integration in time. This statement is know as the Boltzmann-Gibbs Principle. Its proof takes the whole of Section \ref{section_bg} and uses the entropy estimate \ref{entropy} as an essential ingredient. 

In the linear term, it is possible to replace $ f_{x\pm 1} $ by $ f_x $ with an error of order $ n^{-1/2} $. The second term in the above sum is equal to $ X^n\lrp{ \lrp{ \lambda\rho(1-\rho) - 2 - \lambda \rho^2  } f} $ plus a negligible term. Notice that this works only in dimension $ 1 $. To arrive at the coefficient $ -\lrp{ \frac{1}{1-\rho} - \frac{\lambda \rho^2}{1 + \lambda \rho^2} } $, we make use of the identity that defines $ \rho $, that is $ (1-\rho)(1+\lambda \rho^2)=\rho $.

\end{proof}

\bigskip
\noindent\textbf{Step 3: Tightness of the additive functional process}

In Section \ref{section_tightness}, we prove that, for every smooth test function $f:\TT\to\RR$, the sequence of additive functionals 

\begin{equation*}
\left\{\int_0^t L_nX^n_s(f)\,\mathrm{d}s:t\in[0,T]\right\}_{n\in\NN} 
\end{equation*}
is tight in $C([0,T];\RR)$. We have already seen that the sequence of martingales $\{M^n(f)\}_{n\in\NN}$ converges. An application of Mitoma's Theorem (\cite{mitoma1983tightness}, Theorem 3.1) yields then tightness of the distribution-valued sequence $\{X^n_t:t\in[0,T]\}_{n\in\NN}$. 

\bigskip
\noindent\textbf{Step 4: Putting the proof together}

In \cite{holley1978generalized} it is proven that the martingale problem defined by \eqref{mart_problem_1} and \eqref{marti_problem_2} has only one solution. We have to verify that the limit points of the sequence $\{X^n_t:t\in [0,T]\}_{n\in\NN}$ are solutions to this martingale problem and find the law of $X_0$. 

By Proposition \ref{bg_geral}, $M^n_t(f)$ has the same limit as the sequence

\begin{equation}
\tilde{M}^n_t(f):=X^n_t(f)-X^n_0(f) - \int_0^t X^n_s\lrp{\Delta f -\lrp{ \frac{1}{1-\rho} - \frac{\lambda \rho^2}{1 + \lambda \rho^2} } f}\,\mathrm{d}s.
\end{equation}

As we remarked in Step 2, it follows from the Martingale FCLT that $M^n(f)$ converges to a Brownian motion of variance $2t\rho(1-\rho)\norm{\nabla f}_{L^2(\TT)}^2$. This verifies that the limit points solve the martingale problem given by \eqref{mart_problem_1} and $\eqref{marti_problem_2}$.

It remains to determine the law of $X_0$. Since the initial distribution is product, the computation with characteristic functions in \cite{kl}, Corollary 11.2.2, works. We discover that the random field $X_0$ is centered Gaussian and its covariances are given by $\EE[X_0(f)X_0(g)]=\rho(1-\rho)\int_{\TT}f(u)g(u)\,\mathrm{d}u$.

\section{Model independent results}

We collect in the present section statements that depend on the model only through our choice of reference measure.

\subsection{Definitions}

Throughout this section we work with a continuous-time Markov chain $ \{X_t:t\geq 0\} $ on the finite state space $ \Omega $, with infinitesimal generator $ L $ that acts on funcions $ f:\Omega \to \RR $ as 

\begin{equation}\label{key}
Lf(x):=\sum_{y\neq x}r(x,y)\lrc{f(y)-f(x)},
\end{equation}
and the transition rates $ \{r(x,y):(x,y)\in \Omega\times \Omega\} $ are non-negative. 

The \emph{carr\'e du champ} operator associated to $ L $ is the bilinear operator $ \Gamma: \RR^{\Omega}\times \RR^{\Omega} \to \RR^{\Omega} $ defined by 

\begin{equation}\label{key}
\Gamma(f,g)(x):=\sum_{y\in\Omega}r(x,y)\lrp{ f(y)-f(x) }\lrp{g(y)-g(x)}.
\end{equation}

We denote $ \Gamma(f,f) $ simply by $ \Gamma(f) $.

Given two probability measures $ \mu $ and $ \nu $ in $ \Omega $ with $ \mu $ absolutely continuous with respect to $ \mu $, we denote by $ H(\mu|\nu) $ the \emph{relative entropy} between $ \mu $ and $ \nu $:

\begin{equation}\label{key}
H(\mu|\nu) = \sum_{x\in \Omega} \frac{\mu(x)}{\nu(x)}\log \frac{\mu(x)}{\nu(x)} \nu(x).
\end{equation}

\subsection{Inequalities}

\begin{proposition}[Yau's Inequality] For $ t>0 $, let $ \mu_t $ denote the law of $ X_t $. Let  $ \nu $ and $ \nu_t $ be arbitrary probability measures in $ \Omega $, with the sole restrictions that $ \mu_t $ be absolutely continuous with respect to $ \nu_t $ and $ \nu_t $ be absolutely continuous with respect to $ \nu $. 
	Consider the densities
	
	\begin{equation}\label{key}
	g_t(x):=\frac{\mu_t(x)}{\nu_t(x)}
	\mbox{ and }
	\psi_t(x):=\frac{\nu_t(x)}{\nu(x)}.
	\end{equation}
	Let $ L^*_t $ denote the adjoint of the generator $ L $ in $ L^2(\nu_t) $. Then the following inequality holds:
	\begin{equation}\label{yaus_ineq}
	\partial_t H(\mu_t|\nu_t) \leq \int 
	\lrp{ L^*_t\ind{} - \partial_t \log \psi_t} g_t - \Gamma\lrp{\sqrt g_t}\,\mathrm{d}\nu_t. 
	\end{equation}
\end{proposition}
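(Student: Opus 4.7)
My plan is to differentiate the relative entropy in time, isolate the piece coming from the generator, and bound it using a scalar inequality relating $a\log(b/a)$ to $\sqrt{a}(\sqrt{b}-\sqrt{a})$. As the first step, I would use the Kolmogorov forward equation $\partial_t\mu_t=\mu_t L$, the factorisation $\nu_t=\psi_t\nu$ with $\nu$ fixed in time, and the conservation identity $\sum_x\partial_t\mu_t(x)=0$ to produce the exact split
\begin{equation*}
\partial_t H(\mu_t|\nu_t) \;=\; \int L(\log g_t)\, g_t\,\mathrm{d}\nu_t \;-\; \int g_t\,\partial_t\log\psi_t\,\mathrm{d}\nu_t.
\end{equation*}
The second integral already matches one of the two pieces in \eqref{yaus_ineq}, so all the real work lies in bounding the first piece by $\int L^*_t\mathbf{1}\cdot g_t\,\mathrm{d}\nu_t - \int \Gamma(\sqrt{g_t})\,\mathrm{d}\nu_t$.

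For that bound, I would invoke the scalar inequality $a(\log b-\log a)\leq 2\sqrt{a}(\sqrt{b}-\sqrt{a})$ valid for $a,b>0$, which is nothing but $\log x\leq x-1$ applied at $x=\sqrt{b/a}$. Plugging in $a=g_t(x)$ and $b=g_t(y)$, summing against the nonnegative rates $r(x,y)$, yields the pointwise bound $L(\log g_t)\,g_t\leq 2\sqrt{g_t}\,L(\sqrt{g_t})$. Next I would use the algebraic identity
\begin{equation*}
2\sqrt{g}\,L(\sqrt{g}) \;=\; L(g) \;-\; \Gamma(\sqrt{g},\sqrt{g}),
\end{equation*}
which drops out of expanding $(\sqrt{g(y)}-\sqrt{g(x)})^2$ against the rates, to convert the previous bound into $\int L(\log g_t)\,g_t\,\mathrm{d}\nu_t\leq \int L(g_t)\,\mathrm{d}\nu_t-\int\Gamma(\sqrt{g_t})\,\mathrm{d}\nu_t$. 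Recognising $\int L(g_t)\,\mathrm{d}\nu_t=\int L(g_t)\cdot\mathbf{1}\,\mathrm{d}\nu_t=\int g_t\,L^*_t\mathbf{1}\,\mathrm{d}\nu_t$ by the very definition of the adjoint closes the argument.

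There is no serious obstacle in this finite-state setting: the only nontrivial scalar input is the inequality in the second paragraph, which is classical in the derivation of logarithmic Sobolev-type estimates. The conceptual point worth emphasising is that $\nu_t$ is never assumed invariant for $L$; the failure of invariance is exactly measured by $L^*_t\mathbf{1}$, which is precisely why the right-hand side of \eqref{yaus_ineq} naturally combines $L^*_t\mathbf{1}$ with the drift $\partial_t\log\psi_t$. This flexibility is what later allows one to take $\nu_t$ as an inhomogeneous product measure that tracks the hydrodynamic profile without being stationary for the dynamics.
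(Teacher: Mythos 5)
Your proposal is correct and follows essentially the same route as the paper's proof: differentiate $H(\mu_t|\nu_t)=\int\psi_t g_t\log g_t\,\mathrm{d}\nu$ using the forward equation and mass conservation, bound $\int g_t L\log g_t\,\mathrm{d}\nu_t$ via the scalar inequality $a(\log b-\log a)\le 2\sqrt{a}(\sqrt{b}-\sqrt{a})$ together with the identity $2\sqrt{a}(\sqrt{b}-\sqrt{a})=(b-a)-(\sqrt{b}-\sqrt{a})^2$, and convert $\int Lg_t\,\mathrm{d}\nu_t$ into $\int g_t L_t^*\mathbf{1}\,\mathrm{d}\nu_t$ by the definition of the adjoint. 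The only cosmetic difference is that you justify the elementary inequality via $\log x\le x-1$ at $x=\sqrt{b/a}$, which the paper simply asserts.
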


\begin{remark}
	Notice that the quantity $ \int \partial_t \log \psi_t\,d\nu_t $ does not depend on $ \nu $. In fact, this term equals $ \int g_t\partial_t\psi_t\,d\nu = - \int \partial_t g_t \,d\nu_t  $. 
\end{remark}

\begin{proof}
First of all we make the observation that, because our state space is finite, all integrals involved are actually finite sums, so interchanges of integrals and derivatives are automatically justified. 
For all $ f:\Omega\to\RR $, we have \footnote{This is another way of writing $\partial_t\EE_{\nu}[f(X_t)]=\EE_{\nu}[Lf(X_t)]$.}
\begin{equation}\label{key}
\int \partial_t(\psi_t g_t)\cdot f\,d\nu = \int \psi_t g_t\cdot Lf\,d\nu.
\end{equation}

Therefore

\begin{equation*}
\begin{split}
\partial_t H(\mu_t|\nu_t) 
&= \partial_t  \int \psi_t g_t \cdot\log g_t \,d\nu\\
&= \int g_t \cdot\,L \log g_t \,d\nu_t
+ \int \psi_t \cdot \,\partial_t g_t\,d\nu\\
& =\int g_t \cdot \,L \log g_t \,d\nu_t + \int \partial_t(\psi_t g_t) - g_t\cdot\partial_t \psi_t \,d\nu
\end{split}
\end{equation*}

The second integral is equal to $-\int g_t\frac{\partial_t \psi_t}{\psi_t}\,d\nu_t = - \int \partial_t\log \psi_t \,d\mu_t$. It remains to show

\begin{equation*}
\int g_t \cdot L \log g_t \,d\nu_t \leq - \Gamma\lrp{\sqrt {g_t}} + \int Lg_t \,d\nu_t.
\end{equation*}

For that, we write down $L\log g_t$ with all its jump rates and use the elementary inequality $a(\log b - \log a)\leq 2 \sqrt{a}(\sqrt{b}-\sqrt{a})$, that is true for any positive $ a $ and $ b $. The result is 

\begin{equation}\label{yau_inequality2}
\int g_t \cdot L\log g_t \,d\nu_t \leq 2 \int \sum_{y\in\Omega} r(x,y) \sqrt{g_t(x)}\lrp{ \sqrt{g_t(y)}-\sqrt{g_t(x)} }\,d\nu_t(x).
\end{equation}

To finish, we use the identity $2\sqrt a(\sqrt b - \sqrt a) = -(\sqrt b - \sqrt a)^2 + (b-a)$.
\end{proof}

\begin{proposition}\label{adjointformula}
	Let $ \nu $ be a probability measure on the state space $ \Omega$. Denote by $ L^* $ the adjoint of $ L $ in $ L^2(\nu) $. Let $ L^* $ denote the ajoint of of the generator $ L $ in $ L^2(\nu) $. Then
\end{proposition}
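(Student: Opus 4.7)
The plan is to derive the formula for $L^{*}$ directly from its defining property, namely that $\int f\cdot L^{*}g\,d\nu = \int (Lf)\cdot g\,d\nu$ for every pair of functions $f,g:\Omega\to\RR$. Since the state space is finite, every sum in sight is a finite sum, so I can rearrange them freely and the calculation reduces to pure bookkeeping.

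First I would expand the right-hand side using the explicit form of the generator:
$$\int (Lf)\cdot g\,d\nu = \sum_{x\in\Omega}\nu(x)g(x)\sum_{y\neq x}r(x,y)\lrc{f(y)-f(x)}.$$
Splitting this into a "gain" part involving $f(y)$ and a "loss" part involving $f(x)$, and then swapping the dummy labels $x\leftrightarrow y$ in the gain part, I can rewrite the whole expression as $\sum_{x}\nu(x)f(x)\cdot h(x)$ for an explicit $h$, and read off $L^{*}g(x)=h(x)$ by matching.

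Carrying this out I expect to obtain
$$L^{*}g(x)=\frac{1}{\nu(x)}\sum_{y\neq x}\nu(y)r(y,x)g(y) - \lrp{\sum_{y\neq x}r(x,y)}g(x),$$
which I would then rearrange, by adding and subtracting $\sum_{y\neq x}\frac{\nu(y)r(y,x)}{\nu(x)}g(x)$, into the jump-plus-potential form
$$L^{*}g(x) = \sum_{y\neq x}\frac{\nu(y)r(y,x)}{\nu(x)}\lrc{g(y)-g(x)} + (L^{*}\ind{})(x)\cdot g(x),$$
with
$$(L^{*}\ind{})(x) = \sum_{y\neq x}\lrp{\frac{\nu(y)r(y,x)}{\nu(x)} - r(x,y)}.$$
This is the form that is convenient downstream: the first piece is the generator of the reversed chain with respect to $\nu$, and the second piece encodes exactly how far $\nu$ is from being invariant, which is the quantity that feeds into Yau's inequality \eqref{yaus_ineq} when controlling the entropy production.

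There is no real obstacle here; the main point of care is simply to track which variable is the summation index when swapping $x\leftrightarrow y$, and to note that no reversibility hypothesis on $\nu$ is used anywhere in the computation. The payoff is that the "error term" $L^{*}\ind{}$ in \eqref{yaus_ineq} is now given by a closed-form expression in terms of the rates $r(x,y)$ and the reference density $\nu$, which is precisely what is needed to verify, for any candidate product measure $\nu_{t}$, that the degree-one term in $L^{*}_{t}\ind{}$ vanishes and hence that the entropy production can be bounded at the advertised rate.
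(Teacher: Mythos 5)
Your proof is correct and follows essentially the same route as the paper: both derive the general formula $L^{*}f(x)=\frac{1}{\nu(x)}\sum_{y\neq x}\{\nu(y)r(y,x)f(y)-\nu(x)r(x,y)f(x)\}$ from the defining adjoint relation by a finite-sum rearrangement (the paper tests against $\ind{x}$, you swap the dummy indices, which is the same bookkeeping) and then set $f\equiv\ind{}$. No gap to report.
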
                    

\begin{equation}\label{eq_adjointformula}
L^*\ind{}(x) = \sum_{y\neq x}\lrch{\frac{\nu(y)}{\nu(x)}r(y,x)-r(x,y)}.
\end{equation}

\begin{proof} We begin by computing
\begin{equation}\label{key}
\begin{aligned}
L^*f(x) 
&= \frac{1}{\nu(x)}\int \ind{x} \cdot L^* f \,\mathrm{d}\nu \\
&= \frac{1}{\nu(x)}\int L\ind{x} \cdot  f \,\mathrm{d}\nu.
\end{aligned}
\end{equation}

Thus we need to compute $ L\ind{x} $. We have

\begin{equation}\label{key}
L\ind{x}(y) = \left\{
\begin{array}{lr}
r(y,x) & ,y\neq x\\
-\sum_{z\neq x}r(x,z) & ,y=x.
\end{array}
\right.
\end{equation}

Substituting into the previous formula, we get 

\begin{equation}\label{lestrelaf}
L^*f(x)= \frac{1}{\nu(x)}\sum_{y\neq x} \lrch{\nu(y)r(y,x)f(y) - \nu(x)r(x,y)f(x)}.
\end{equation}

Taking $ f = 1 $, we finish the proof.
\end{proof}

\begin{lemma}[Integration by parts]\label{integration_by_parts} Given $ x,y\in \TT^d_n $ and $ \eta \in \{0,1\}^{\TT^d_n} $, denote by $ \eta^{x,y} $ the 
	configuration that exchanges the values of $ \eta_x $ and $ \eta_y $.
	
	Let $g$ and $h$ be functions on the configuration space $\{0,1\}^{\TT_n^d}$. Assume $h$ is invariant under the change of variables $\eta\mapsto \eta^{x,y}$. Then, for any positive $a$, the following inequality holds:
	\begin{equation}
	\int g\cdot h(\eta_x - \eta_y)\,\mathrm{d}\nu_{\rho} \leq a n^2 \int \lrp{
		\sqrt{g\lrp{\eta^{x,y}}} -\sqrt {g(\eta)}
	}^2 
	\,\mathrm{d}\nu_\rho(\eta) + \frac{1}{an^2}\int h^2\cdot g\,\mathrm{d}\nu_{\rho}.
	\end{equation}
\end{lemma}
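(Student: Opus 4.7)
The plan is to exploit three symmetries of the integrand and measure under the exchange map $\sigma:\eta\mapsto\eta^{x,y}$: (i) $\nu_\rho$, being product, is invariant under $\sigma$; (ii) $h$ is invariant under $\sigma$ by assumption; (iii) the linear factor $\eta_x-\eta_y$ is antisymmetric, i.e.\ changes sign under $\sigma$. Combining (i)--(iii) by the change of variables $\eta\mapsto\sigma\eta$ in the integral on the left-hand side,
\begin{equation}
\int g\,h(\eta_x-\eta_y)\,\mathrm d\nu_\rho = -\int (g\circ\sigma)\,h(\eta_x-\eta_y)\,\mathrm d\nu_\rho,
\end{equation}
so averaging the two expressions yields
\begin{equation}
\int g\,h(\eta_x-\eta_y)\,\mathrm d\nu_\rho = \tfrac{1}{2}\int (g-g\circ\sigma)\,h(\eta_x-\eta_y)\,\mathrm d\nu_\rho.
\end{equation}

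The second step is the standard algebraic identity $g-g\circ\sigma=(\sqrt g-\sqrt{g\circ\sigma})(\sqrt g+\sqrt{g\circ\sigma})$. Plugging this in and applying Young's inequality $|AB|\le \tfrac{1}{2}(\alpha A^2+\alpha^{-1}B^2)$ with $A=\sqrt g-\sqrt{g\circ\sigma}$, $B=h(\eta_x-\eta_y)(\sqrt g+\sqrt{g\circ\sigma})$ and a parameter $\alpha$ to be chosen, I obtain a bound of the form
\begin{equation}
\tfrac{\alpha}{4}\int(\sqrt g-\sqrt{g\circ\sigma})^2\,\mathrm d\nu_\rho + \tfrac{1}{4\alpha}\int h^2(\eta_x-\eta_y)^2(\sqrt g+\sqrt{g\circ\sigma})^2\,\mathrm d\nu_\rho.
\end{equation}

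The last step is to simplify the second integral. Using $(\sqrt g+\sqrt{g\circ\sigma})^2\le 2(g+g\circ\sigma)$, the bound $(\eta_x-\eta_y)^2\le 1$, and invoking once more the $\sigma$-invariance of $\nu_\rho$ together with $h^2\circ\sigma=h^2$, one checks that $\int h^2(g+g\circ\sigma)\,\mathrm d\nu_\rho=2\int h^2 g\,\mathrm d\nu_\rho$. The second integral is therefore bounded by $(\alpha^{-1})\int h^2 g\,\mathrm d\nu_\rho$. Finally, matching the advertised coefficients requires the choice $\alpha=4an^2$, which produces the first term $an^2\int(\sqrt{g(\eta^{x,y})}-\sqrt{g(\eta)})^2\,\mathrm d\nu_\rho$ and makes the second term at most $\tfrac{1}{an^2}\int h^2g\,\mathrm d\nu_\rho$, completing the proof.

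There is no real obstacle here: the only things to get right are the antisymmetrization step (which is the whole point of the assumption that $h$ is $\sigma$-invariant) and the bookkeeping of the Young's inequality parameter. The factor $n^2$ in the statement is purely cosmetic at this level; it will be matched against the diffusive rescaling in the exclusion Dirichlet form when the lemma is applied downstream.
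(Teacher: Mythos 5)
Your proof is correct and is essentially the paper's own argument: the same antisymmetrization via the $\sigma$-invariance of $\nu_\rho$ and $h$, the same factorization $g-g\circ\sigma=(\sqrt g-\sqrt{g\circ\sigma})(\sqrt g+\sqrt{g\circ\sigma})$, Young's inequality with a tuned parameter, and the bound $(\sqrt g+\sqrt{g\circ\sigma})^2\le 2(g+g\circ\sigma)$ together with $(\eta_x-\eta_y)^2\le 1$. The only difference is the cosmetic choice of the Young parameter, and your bookkeeping checks out.
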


\begin{proof}
	Denote $ g^{x,y}(\eta):=g(\eta^{x,y}) $. Since $\nu_{\rho}$ is invariant under the change of variables $\eta\mapsto \eta^{x,y}$,
	
	\begin{equation}
	\begin{split}
	\int g\cdot h (\eta_x - \eta_y)\,\mathrm{d}\nu_{\rho} &= \frac{1}{2}\int 
	h(g - g^{x,y})(\eta_x - \eta_y)\,\mathrm{d}\nu_{\rho}.
	\end{split}
	\end{equation}
	
	Now we factor $g - g^{x,y} = (\sqrt g -\sqrt g^{x,y})(\sqrt g + \sqrt g^{x,y})$ and apply the elementary inequality $uv\leq 2an^2 u^2 + \frac{v^2}{2an^2}$. To finish the proof, we use $(\sqrt g^{x,y} +\sqrt g)^ 2\leq 2(g^{x,y}+g)$ and recall that $h^{x,y}=h$ by assumption.

\end{proof}

\begin{lemma}[Feynman-Kac Inequality]\label{fkineq}
	Let $ (x_t)_{t\geq 0} $ be a Markov chain on the finite state space $ \Omega $, with infinitesimal generator $ L $. 
	
	Let $ \nu $  be a probability measure in $ \Omega $. Consider  an arbitrary family
	$ (\mu_t)_{t\geq 0} $ of probability measures in $ \Omega $, absolutely continuous with respect to $ \nu $, and denote the densities by $ \psi_t:=\frac{d\mu_t}{d\nu} $.
	
	Assume the law of $ x_0 $ is $ \mu_0 $. 
	Let $ W:\RR_+\times \Omega \to \RR $ be a bounded function and fix $ t>0 $. Then
	
	\begin{equation}\label{key}
	\log\EE_{\mu_0} \lrc{
		\exp\lrch{
			\int_0^t W(x_s)\,\mathrm{d}s}} \leq \int_0^t C_s\,\mathrm{d}s,
	\end{equation} 
	where
	\begin{equation}\label{key}
	C_s:= \sup_{g}\lrch{
		\int \lrp{
			W_s + \frac{1}{2}L-\frac{1}{2}\partial_s \log \psi_s
		}g - \frac{1}{2}\Gamma\lrp{\sqrt g}\,\mathrm{d}\mu_s
	},
	\end{equation}
	the supremum being taken over the set of all $ \mu_s $-densities $ g:\Omega\to \RR $, that is, $ g\geq 0 $ and $ \int g\,\mathrm{d}\mu_s=1 $.
	
\end{lemma}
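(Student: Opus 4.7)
The plan is to represent $\EE_{\mu_0}[Z_t]$, where $Z_t=\exp\!\int_0^t W_s(x_s)\,\mathrm{d}s$, through the subprobability kernel $\phi_t(x):=\EE_{\mu_0}[Z_t\mathbf{1}\{x_t=x\}]$, which satisfies the forward Feynman--Kac equation $\partial_t\phi_t=L^{\dagger}\phi_t+W_t\phi_t$ with $\phi_0=\mu_0$. Setting $h_t:=\phi_t/\mu_t$, one has $\EE_{\mu_0}[Z_t]=\int h_t\,\mathrm{d}\mu_t$ and $h_0\equiv 1$. The strategy is to bound $\int h_t^2\,\mathrm{d}\mu_t$ via a Gr\"onwall-type differential inequality and use Cauchy--Schwarz in the form $2\log\EE_{\mu_0}[Z_t]\le \log\int h_t^2\,\mathrm{d}\mu_t$, so it suffices to prove $\log\int h_t^2\,\mathrm{d}\mu_t\le 2\int_0^t C_s\,\mathrm{d}s$.

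Combining the forward equation for $\phi_t$ with the product rule $\phi_t=h_t\psi_t\nu$, the evolution of $h_t$ comes out as $\partial_t h_t=L^*_{\mu_t}h_t+(W_t-\partial_t\log\psi_t)h_t$, where $L^*_{\mu_t}$ denotes the adjoint of $L$ in $L^2(\mu_t)$. This is where the $\partial_s\log\psi_s$ correction term originates: it encodes the drift of the reference measure itself. Differentiating $\int h_t^2\,\mathrm{d}\mu_t$ picks up $2\int h_t\,\partial_t h_t\,\mathrm{d}\mu_t+\int h_t^2\,\partial_t\log\psi_t\,\mathrm{d}\mu_t$, and substituting the evolution for $\partial_t h_t$ one obtains a term $2\int (Lh_t)\,h_t\,\mathrm{d}\mu_t$ plus the expected $W_t$ and $\partial_t\log\psi_t$ contributions.

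Next I would invoke the pointwise chain rule $Lh_t^2=2h_t\,Lh_t+\Gamma(h_t)$, valid for any Markov generator $L$, to rewrite $2\int (Lh_t)h_t\,\mathrm{d}\mu_t=\int Lh_t^2\,\mathrm{d}\mu_t-\int\Gamma(h_t)\,\mathrm{d}\mu_t$. Normalizing by defining the $\mu_t$-density $\widetilde g_t:=h_t^2/\!\int h_t^2\,\mathrm{d}\mu_t$, the resulting identity for $\partial_t\log\int h_t^2\,\mathrm{d}\mu_t$ becomes exactly twice the functional
\[
\int\Bigl[\bigl(W_t+\tfrac{1}{2}L-\tfrac{1}{2}\partial_t\log\psi_t\bigr)\widetilde g_t-\tfrac{1}{2}\Gamma(\sqrt{\widetilde g_t})\Bigr]\,\mathrm{d}\mu_t,
\]
which is bounded above by $2C_t$ by the very definition of $C_t$ as a supremum over $\mu_t$-densities. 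Integrating from $0$ to $t$ and using $\int h_0^2\,\mathrm{d}\mu_0=1$ closes the argument.

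The main obstacle is the bookkeeping: the factor $\tfrac12$ multiplying both $L$ and $\partial_s\log\psi_s$ in the statement, and the choice of normalization $\widetilde g_t=h_t^2/\!\int h_t^2\,\mathrm{d}\mu_t$ rather than $h_t/\!\int h_t\,\mathrm{d}\mu_t$, have to be orchestrated so that the two appearances of the chain rule (once in the form $2h_tLh_t=Lh_t^2-\Gamma(h_t)$, and once implicitly when interpreting $\tfrac12 Lg-\tfrac12\Gamma(\sqrt g)=\sqrt g\,L\sqrt g$ inside $C_s$) cancel the awkward $\int Lh_t^2\,\mathrm{d}\mu_t$ term, which is \emph{not} zero when $\mu_t$ is not invariant. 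A minor technical point is ensuring $h_t$ is well defined; since $\Omega$ is finite this can be arranged by restricting to $\{\nu>0\}$ or by an obvious perturbation argument.
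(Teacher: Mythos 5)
Your argument is correct, and it is the time-reversed mirror of the paper's proof. The paper evolves the \emph{backward} Feynman--Kac solution $h(s,x)$ with terminal condition $h(t,\cdot)=1$, identifies $h(0,x)=\EE_x[Z_t]$ via an exponential Dynkin martingale (which occupies a good part of its proof), and runs Gr\"onwall backwards on $\phi(s)=\int h_s^2\,\mathrm{d}\mu_s$ from $\phi(t)=1$; you instead evolve the \emph{forward} object $\phi_t(x)=\EE_{\mu_0}[Z_t\ind{}\{x_t=x\}]$, whose evolution equation follows by directly differentiating the expectation, and run Gr\"onwall forwards from $\int h_0^2\,\mathrm{d}\mu_0=1$. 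What your route buys is that the representation step is essentially free (no martingale argument needed to identify the solution of the backward equation), and the origin of the $\partial_s\log\psi_s$ correction is transparent as the drift of the reference measure; the cost is that you must invoke Cauchy--Schwarz explicitly at the terminal time ($\EE_{\mu_0}[Z_t]=\int h_t\,\mathrm{d}\mu_t\le(\int h_t^2\,\mathrm{d}\mu_t)^{1/2}$), whereas in the paper the same inequality is applied, tacitly, at time $0$. Two small remarks. First, your closing worry about "cancelling the awkward $\int Lh_t^2\,\mathrm{d}\mu_t$ term" resolves itself: after writing $2h_tLh_t=Lh_t^2-\Gamma(h_t)$, the pair $\int Lh_t^2\,\mathrm{d}\mu_t-\int\Gamma(h_t)\,\mathrm{d}\mu_t$ is \emph{identical} to $2\phi(t)$ times the $\tfrac12 Lg-\tfrac12\Gamma(\sqrt{g})$ part of the functional defining $C_t$ evaluated at $g=h_t^2/\phi(t)$ (using $\sqrt{g}=h_t/\sqrt{\phi(t)}$, legitimate since $\phi_t\ge 0$); nothing needs to cancel, the terms simply match, and the variational definition of $C_s$ is built to absorb the non-invariance of $\mu_s$. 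Second, you should note explicitly that $h_t\ge 0$, so that $h_t^2/\phi(t)$ is an admissible density and its square root is $h_t/\sqrt{\phi(t)}$; this is immediate from $Z_t>0$.
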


\begin{remark}
	This is an extension of Lemma 7.2 in Appendix A.1 of \cite{kl}. The version we stated above is useful when the approximating measures change with time.
\end{remark}

\begin{remark}
	The most delicate estimates we need to do involve temporal cancellations. The only robust tools we are aware of to deal with temporal cancellations are Feynman-Kac's Inequality and Kipnis-Varadhan's Inequality. Both methods are primarily analytical, and the mechanism by which they account for the temporal cancellations is not clear to us. 
\end{remark}

\begin{proof}
	We claim that there exists a function $ h:[0,t]\times \Omega \to \RR $ such that $ h(t,x) = 1 $, 
	\begin{equation}\label{fk_formula}
	h(0,x) = \EE_x \lrc{
		\exp\lrch{
			\int_0^t W(s,x_s)\,\mathrm{d}s
	}}.
	\end{equation}
	and that satisfies the equation 
	\begin{equation}\label{key}
	\partial_s h(s,x) = - (Lh)(s,x) - W(s,x)h(s,x).
	\end{equation}
	The reader familiar with the Feynman-Kac formula can already guess what the function should be. For the moment, let's just assume it exists and use it to prove our inequality. 
	Denote by $ f_s:\Omega \to \RR_+ $ the Radon-Nykodym density of the law of $ x_s $ 
	with respect to $ \mu_s $. We want to bound $ \int h_0 \,\mathrm{d}\mu_0 $. Define then
	\begin{equation}\label{key}
	\phi(s):=\int h_s^2  \,\mathrm{d}\mu_s = \int h_s^2  \psi_s \,\mathrm{d}\nu.
	\end{equation}
	By Gronwall's inequality, it is enough to prove
	\begin{equation}\label{fk_gronwall}
	-\phi '(s) \leq 2C_s \phi(s).
	\end{equation}
	Differentiating, we get 
	\begin{equation}\label{key}
	\phi '(s) = \int 2 h_s\partial_s h_s + h_s^2 \partial_s (\log \psi_s)\,\mathrm{d}\mu_s.
	\end{equation}
	Recall that, by assumption, $ \partial_s h_s = -(L + W_s)h_s $. Then, denoting $ \overline{h_s}:=h_s\lrp{\int h_s^2\,\mathrm{d}\mu_s}^{-1} $ we get 
	\begin{equation}\label{key}
	\begin{aligned}
	-\phi'(s) &= \phi(s)\cdot\int 2\overline{h_s} (L+W_s) \overline{h_s} - \overline{h_s}^2 \partial_s\log \psi_s \,\mathrm{d}\mu_s  \\
	& \leq 2\phi(s)\cdot 
	\sup_{h\geq 0, \int h^2\,\mathrm{d}\mu_s=1} \lrch{		
		\int h (L + W)h - \frac{1}{2}h^2 \partial_s \log\psi_s\,\mathrm{d}\mu_s
	}.
	\end{aligned}
	\end{equation}
	Using $ L(h^2) = 2 hLh + \Gamma(h) $, we see that the supremum above is equal to $ C_s $ and thus finish the proof of \eqref{fk_gronwall}.
	
	It remains to show that the function $ h:[0,t]\times \Omega \to \RR $ that solves the backward equation
	\begin{equation}
	\left\{
	\begin{array}{rlr}
	(\partial_s + L)h(s,x) &= -W(s,x)h(s,x) & x\in \Omega, s\in[0,t],\\
	h(t,x) &= 1 & x\in\Omega
	\end{array}\right.
	\end{equation}
	satisfies \eqref{fk_formula}.

	Recall the exponential Dynkin martingales: for any bounded $ g:\RR_+\times \Omega \to \RR $, the process
	
	\begin{equation}\label{key}
	M^g_t:= \exp\lrch{
		g(t,x_t) - g(0,x_0) - \int_0^t e^{-g(s,x_s)}(\partial_s + L)e^{g(s,x_s)}\,\mathrm{d}s
	}
	\end{equation}
	is a martingale with respect to the natural filtration. Taking $ g = \log h $, we find that the process
	\begin{equation}\label{key}
	\lrch{M^g_s:=
		\frac{h(s,x_s)}{h(0,x_0)}\exp \lrch{\int_0^s W(r,x_r)\,\mathrm{d}r}
		: s\in [0,t] }
	\end{equation}
	is a mean one martingale, so that 
	\begin{equation}\label{key}
	h(0,x) = \EE_x[M^g_t] = \EE_x \lrc{
		\exp \lrch{\int_0^t
			W(r,x_r)\,\mathrm{d}r
		} 
	},
	\end{equation}
	as we wanted to show.

\end{proof}

\subsection{Mass transport and flows}
We think of telescoping sums as mass transport. The trivial identity 

\begin{equation*}
\eta_0 - \eta_{\ell} = \sum_{j=1}^{\ell} \eta_{j-1}-\eta_{j}
\end{equation*}
describes the movement of a point mass from $0$  to  $\ell$ in $\ell$ steps: at step $j$, mass $1$ goes from $j-1$ to $j$. A less obvious identity (used in the proof of the Replacement Lemma) is 

\begin{equation}
\eta_0 - \frac{\eta_1+\cdots + \eta_\ell}{\ell} = \sum_{j=0}^{\ell-1}\frac{\ell - j }{\ell}(\eta_{j} - \eta_{j+1}). 
\end{equation} 
Here one spreads a unit mass at $0$ uniformly along the interval $\{1,\ldots, \ell\}$ by sending mass $1$ from $0$ to $1$, mass $\frac{\ell - 1}{\ell}$ from $1$ to $2$, mass $ \frac{\ell - 2}{\ell} $ from $ 2 $ to $ 3 $ and so on.  
In $d$ dimensions, we have a similar identity. Let $\ell\in \NN$ and $\Lambda_\ell:=\{y\in \ZZ^d: 0\leq y < \ell\}$.
In Lemma \ref{flux_lemma} below, we find a function $\phi: \Lambda_{\ell}\to \RR$ that satisfies

\begin{equation}
\eta_0 - \frac{1}{\ell^d}\sum_{y\in \Lambda_{\ell}}\eta_y = \sum_{j=1}^d\sum_{0\leq y < \ell}\phi_{y}(\eta_y - \eta_{y+e_j})
\end{equation}
and such that $\sum_y\phi_y^2$ is small. 

\begin{definition}\label{flowdef}
	Given two measures $\mu$ and $\nu$ on the finite set $\Omega$, we say that $\phi:\Omega\times \Omega\to \RR$ is a \emph{flow connecting $\mu$ and $\nu$}, and write $ \phi: \mu \mapsto \nu $, if
	\begin{enumerate}[(i)]
		\item $\phi(x,y)=-\phi(y,x)$ for all $x,y \in \Omega$;
		\item $\sum_{y\in \Omega} \phi(x,y)=\nu(x)-\mu(x)$.
	\end{enumerate}
	We call \emph{support} of $\phi$ the set of oriented edges $\{(x,y)\in \Omega\times \Omega: \phi(x,y)\neq 0 \}$, and refer to as \emph{cost} or \emph{norm} of $\phi$ the quantity $\norm{\phi}^2:=\sum_{x,y\in \Omega}\phi(x,y)^2$.
\end{definition}

Our goal is to construct a flow in a box of $\ZZ^d$ that connects the point mass to the uniform distribution at small cost.

\begin{theorem}[Flow Lemma]\label{flux_lemma}
	Let $d$ and $\ell$ be positive integers. Let $\Lambda_\ell:=\{1,\ldots, \ell\}^d$. 
	
	Then, there exists a flow $\phi^\ell:\Lambda_{\ell} \to \RR$ that connects the point mass at $(1,\ldots,1)$ to the uniform distribution in $\Lambda_\ell$ and is supported in nearest neighbour edges such that $ \norm{\phi^\ell}^2 = O\lrp{g_d(\ell)} $, with $ g_d $ as defined in \eqref{defgd}.
	\footnote{By $\norm{\phi}^2=O(\ell)$, we mean that $\norm{\phi^\ell}^2 \leq Cg_d(\ell)$ for some constant $C$ that does not depend on $\ell$. Similarly for the other two bounds.}
	In addition, there is a flow that connects the point mass at zero to the uniform distribution in $ \Lambda $ whose cost is of the same order.
\end{theorem}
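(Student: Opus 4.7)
The plan is to construct the flow as a discrete gradient and bound its cost by diagonalising the Laplacian on $\Lambda_\ell$. On the finite graph $(\Lambda_\ell, E_\ell)$ with nearest-neighbour edges, the Hodge decomposition writes every admissible flow as $\nabla u + \phi^\circ$, with $\nabla u(x,y):=u(y)-u(x)$ and $\phi^\circ$ divergence-free; these two summands are orthogonal in $\ell^2(E_\ell)$. Hence the minimum-cost admissible flow is a pure gradient $\phi^\ast$ built from the solution $u$ of the discrete Neumann Poisson equation $-\Delta u=\delta_{(1,\ldots,1)}-\nu$ on $\Lambda_\ell$, where $\nu$ is the uniform measure, and its squared norm satisfies
\[
\norm{\phi^\ast}^2 \;=\; 2\langle u,\,-\Delta u\rangle_{\ell^2(\Lambda_\ell)} \;=\; 2\bigl(u(1,\ldots,1)-\bar u\bigr).
\]
The problem thus reduces to estimating one value of the discrete Green's function.

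The case $d=1$ is handled directly by the explicit gradient $\phi(j,j+1)=(\ell-j)/\ell$ already displayed in the paragraph preceding the theorem, for which $\norm{\phi}^2 \leq \ell/3$, matching $g_1(\ell)=\ell$. For $d\geq 2$ I would diagonalise the Neumann Laplacian on $\Lambda_\ell$ in the cosine basis $\varphi_k(x)\propto \prod_j\cos\bigl(\pi k_j(x_j-\tfrac12)/\ell\bigr)$, $k\in\{0,\ldots,\ell-1\}^d$, whose eigenvalues are $\lambda_k=\sum_{j=1}^d 2\bigl(1-\cos(\pi k_j/\ell)\bigr)$. Since $|\varphi_k(1,\ldots,1)|\lesssim \ell^{-d/2}$ uniformly and $\lambda_k\asymp |k|^2/\ell^2$ for $|k|\lesssim \ell$, Parseval yields
\[
u(1,\ldots,1)-\bar u \;=\; \sum_{k\neq 0}\frac{\varphi_k(1,\ldots,1)^2}{\lambda_k} \;\lesssim\; \ell^{\,2-d}\sum_{0<|k|\leq \ell}\frac{1}{|k|^2}.
\]
Comparing the last sum with the radial integral $\int_1^\ell r^{d-3}\dd r$ produces $O(\log\ell)$ when $d=2$ and $O(\ell^{d-2})$ when $d\geq 3$; multiplying by $\ell^{2-d}$ gives exactly $g_d(\ell)$. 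The variant with the point mass at $0$ follows by translating the construction.

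The main obstacle is the high-frequency regime, where $|k|$ is comparable to $\ell$ and the continuum estimate $\lambda_k\asymp |k|^2/\ell^2$ degrades; one must check separately that the $O(\ell^d)$ modes with some $k_j$ near $\ell$ contribute only $O(1)$ to $\sum_k 1/\lambda_k$. A cleaner, more symmetric alternative is to reflect $\Lambda_\ell$ through each face to create $2^d$ image sources on the torus $\TT_{2\ell}^d$; solving the Poisson equation there using complex exponentials and restricting the resulting potential to $\Lambda_\ell$ automatically enforces the Neumann condition at the boundary of the box, and the Fourier estimate on the torus is then entirely routine.
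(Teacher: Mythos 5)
Your argument is correct, but it proceeds by a genuinely different route from the paper. The paper's proof is constructive and combinatorial: it decomposes $\Lambda_k\setminus\Lambda_{k-1}$ into the strata $A_j$ of points with exactly $j$ coordinates equal to $k$, spreads mass from each stratum to the next by gluing copies of the explicit one-dimensional flow \eqref{1dflux}, and sums the resulting shell-to-shell flows over $k$; the cost bound comes from the pointwise estimate $|\phi(x,x-e_i)|\lesssim k^{1-d}$ on the $k$-th shell together with a count of the $O(k^{d-1})$ edges there. You instead take the variational/spectral route: the minimal-cost admissible flow is the discrete gradient $\nabla u$ with $-\Delta u=\delta_{(1,\ldots,1)}-\nu$ under Neumann conditions, its cost is $2\bigl(u(1,\ldots,1)-\bar u\bigr)$ by summation by parts, and this Green's-function value is estimated in the cosine eigenbasis. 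Your computations check out: $\norm{\nabla u}^2=2\langle u,-\Delta u\rangle$ under the paper's ordered-pair convention for $\norm{\cdot}^2$, the eigenfunctions are uniformly of size $\ell^{-d/2}$, and the sum $\ell^{2-d}\sum_{0<|k|\le\ell}|k|^{-2}$ reproduces $g_d(\ell)$ exactly. One remark: the ``main obstacle'' you flag is vacuous, because the upper bound on the cost only requires the \emph{lower} bound $\lambda_k\ge 4|k|^2/\ell^2$, which holds uniformly over all $k\in\{0,\ldots,\ell-1\}^d$ since $1-\cos(\pi t)=2\sin^2(\pi t/2)\ge 2t^2$ on $[0,1]$; no separate high-frequency analysis is needed. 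What each approach buys: the paper's construction is elementary, produces an explicit flow, and requires no spectral input; yours identifies the optimal flow, shows the stated orders are sharp (the same Fourier sum gives a matching lower bound), and generalises painlessly to other source/target pairs. Since the rest of the paper only uses the $\ell^2$-norm of the flow (e.g.\ in Corollary \ref{fluxo_piramidal} and Lemma \ref{conv_fluxo}), your flow could be substituted for the paper's without further changes.
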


\begin{remark}
	The concept of mass flow on a graph is closely related to that of current flow in electric networks. Indeed, consider an electric network where every edge has resistance $1$. If $a$ and $z$ are distinct nodes of that network then a unit current flowing from $a$ to $z$ is also a mass flow connecting the point mass at $a$ to the point mass at $z$. 
\end{remark}

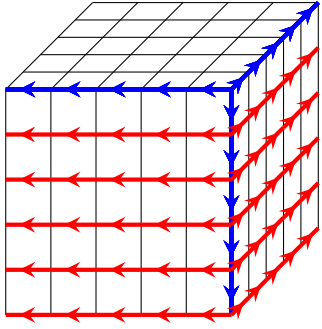
\begin{figure}
	\centering
\begin{tikzpicture}[scale=.6] 
	\clip (-3,-3,-3) rectangle (10.5, 10.5, 10.5);
	\foreach \x in{1,...,6}
	{   \draw (1,\x ,6) -- (6,\x ,6);
		\draw (\x ,1,6) -- (\x ,6,6);
		\draw (6,\x ,6) -- (6,\x ,1);
		\draw (\x ,6,6) -- (\x ,6,1);
		\draw (6,1,\x ) -- (6,6,\x );
		\draw (1,6,\x ) -- (6,6,\x );
	}
	\foreach \x in{1,...,5}
	\foreach \y in{1,...,5}
	{
		\draw [blue, ultra thick, directed] (\x+1,6,6) -- (\x,6,6);
		\draw [blue, ultra thick, directed] (6,\x+1,6) -- (6,\x,6);
		\draw [blue, ultra thick, directed] (6,6,\x+1) -- (6,6,\x);
		\draw [red, ultra thick, directed] (6,\y,\x+1) -- (6,\y,\x);
		\draw [red, ultra thick, directed] (\x+1,\y,6) -- (\x,\y,6);
}
\end{tikzpicture}
\caption{The blue flow spreads the mass from the corner of the cube along its edges. The red flow spreads the mass from one of the edges of across the faces of the cube adjacent to it.}
\end{figure}

In the remaining of the present subsection, we are going to prove Theorem \ref{flux_lemma}. Our proof is going to be constructive. In one dimension, one can take
\begin{equation}\label{1dflux}
\phi(k,k+1):=\frac{\ell - k}{\ell}\Ind{0\leq k < \ell}. 
\end{equation}

In higher dimensions, we will not give an explicit formula for the flow, but will build it instead by gluing together several copies of \eqref{1dflux}.

Consider then $d\geq 2$. We begin by introducing some notation. Let 
\begin{equation}\label{box}
\Lambda_k:=\{(x_1,\ldots,x_d)\in \ZZ^d: 1\leq x_j \leq k \mbox{ for all }j\leq d \},
\end{equation}
and denote by $U_A$ the uniform distribution on the finite set $A$, that is, the measure that assigns mass $|A|^{-1}$ to every point of $A$. Our goal is to connect $U_{\Lambda_{\ell}}$ to $ U_{\Lambda_1} $.

\begin{lemma}\label{threedflux}
	Let $k\in \{2,\ldots, \ell\}$. There exists a mass flow $\phi_k$ with support in the nearest-neighbour edges of $\Lambda_k$ such that 
	\begin{enumerate}
		\item $\phi_k: U_{\Lambda_k}\mapsto U_{\Lambda_{k-1}}$;
		\item $\phi_k\leq d \lrp{\frac{2}{k}}^d$. 
	\end{enumerate}
\end{lemma}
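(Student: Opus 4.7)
The plan is to build $\phi_k$ by composing $d$ axis-aligned flows. Define the interpolating sets $\Lambda_k^{(j)} := \{x \in \Lambda_k : x_i \leq k-1 \text{ for all } i \leq j\}$ for $j = 0, 1, \ldots, d$, so that $\Lambda_k^{(0)} = \Lambda_k$ and $\Lambda_k^{(d)} = \Lambda_{k-1}$. For each $j \in \{1,\ldots,d\}$ I would construct a flow $\phi_{k,j}$ from $U_{\Lambda_k^{(j-1)}}$ to $U_{\Lambda_k^{(j)}}$ supported only on nearest-neighbour edges in direction $e_j$, and then set $\phi_k := \sum_{j=1}^d \phi_{k,j}$. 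The divergences telescope, so $\phi_k$ transports $U_{\Lambda_k}$ to $U_{\Lambda_{k-1}}$, which is condition (1) of the lemma.

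To produce $\phi_{k,j}$ I would decompose the problem along the parallel lines
\[
L_{x'} := \{(x_1, \ldots, x_{j-1}, s, x_{j+1}, \ldots, x_d) : 1 \leq s \leq k\}
\]
indexed by the transverse coordinates $x_i$, $i \neq j$ (with $x_i \leq k-1$ forced for $i < j$). Restricted to any such line, both $U_{\Lambda_k^{(j-1)}}$ and $U_{\Lambda_k^{(j)}}$ are scalar multiples of the one-dimensional uniform measures on $\{1,\ldots,k\}$ and $\{1,\ldots,k-1\}$ carrying the same total mass $1/((k-1)^{j-1}k^{d-j})$. A rescaled copy of the one-dimensional flow \eqref{1dflux} on each line then gives $\phi_{k,j}$, and these line-wise pieces assemble consistently because lines corresponding to different $x'$ use disjoint sets of $e_j$-edges.

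For the pointwise bound, note that each edge of $\Lambda_k$ lies in a unique coordinate direction, so it receives a contribution from at most one $\phi_{k,j}$. From the explicit form of \eqref{1dflux} (or a direct cumulative-distribution computation), the maximum value of the scaled 1D flow on a single line $L_{x'}$ is
\[
\frac{1}{k}\cdot\frac{1}{(k-1)^{j-1} k^{d-j}} = \frac{1}{k^d}\left(\frac{k}{k-1}\right)^{j-1} \leq \frac{2^{j-1}}{k^d},
\]
using $k/(k-1) \leq 2$ for $k \geq 2$. Hence $|\phi_k(x,y)| \leq 2^{d-1}/k^d \leq d(2/k)^d$, which is condition (2).

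The hardest part will be pure bookkeeping: keeping the normalisations of the scaled 1D flow straight as $j$ varies and verifying that the slice-wise definition really produces a single coherent flow with the correct total divergence. The factor $d$ in the stated bound $d(2/k)^d$ appears only as slack; the construction in fact delivers the strictly better estimate $|\phi_k| \leq 2^{d-1}/k^d$.
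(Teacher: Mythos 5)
Your construction is correct, but it follows a genuinely different route from the paper's. You interpolate between $U_{\Lambda_k}$ and $U_{\Lambda_{k-1}}$ through the product measures on $\{1,\dots,k-1\}^{j}\times\{1,\dots,k\}^{d-j}$, shrinking the box one coordinate direction at a time, each step $\phi_{k,j}$ being a family of parallel one-dimensional flows on disjoint lines; the paper instead stratifies the shell $\Lambda_k\setminus\Lambda_{k-1}$ into the sets $A_j$ of points having exactly $j$ coordinates equal to $k$ and cascades the mass inward (corner to edges to faces, and so on), gluing together point-to-segment copies of \eqref{1dflux}. Your mass-balance check on each line, $k\cdot(k-1)^{-(j-1)}k^{-(d-j+1)}=(k-1)\cdot(k-1)^{-j}k^{-(d-j)}$, is exactly right, and since the $\phi_{k,j}$ live on edges in distinct coordinate directions the pointwise bound is the maximum over $j$, which gives your sharper constant $2^{d-1}k^{-d}$ in place of $d\,2^{d}k^{-d}$. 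One small point of care: the flow you need on each line is not literally \eqref{1dflux} (which spreads a point mass), but the cumulative-distribution flow from the uniform measure on $\{1,\dots,k\}$ to the uniform measure on $\{1,\dots,k-1\}$, namely $\phi(s,s+1)=-s/(k(k-1))$, whose maximum modulus $1/k$ per unit line mass is precisely what your computation uses; your parenthetical already covers this. Both constructions are supported on nearest-neighbour edges of $\Lambda_k$ and satisfy the two conditions of Lemma \ref{threedflux}, and either feeds into the summation over $k$ in the proof of Theorem \ref{flux_lemma} unchanged, since that argument uses only the sup bound on each $\phi_j$ and the fact that $\phi_j$ is supported inside $\Lambda_j$. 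What the paper's shell decomposition buys is that disjointness of the supports of its pieces is visually obvious; what yours buys is an explicit product-form formula for each $\phi_{k,j}$, which makes the bookkeeping you flag essentially trivial.
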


Before we prove the lemma, let us use it to prove Theorem \ref{flux_lemma}. Notice that the mass flow defined by

\begin{equation*}
\phi:=\sum_{k=2}^{\ell} \phi_k,
\end{equation*}
connects $ U_{\Lambda_{\ell}} $ to the point mass at $ (1,\ldots,1) $ (this can be checked directly from Definition \ref{flowdef}).

It remains to estimate the norm of $\phi$. Take a nearest-neighbour edge in $\Lambda_{\ell}$, say $(x,x-e_i)$, where $x\in \Lambda_k\setminus \Lambda_{k-1}$, $i\leq d$ and $k\leq \ell$. Notice that if  $j < k$ then $\phi_j(x,x-e_i)=0$ . Therefore

\begin{equation}
|\phi(x,x-e_i)|\leq \sum_{j=k}^{\ell}|\phi_j(x,x-e_i)|\leq \sum_{j=k}^{\ell}\frac{d 2^d}{j^d}\leq \frac{d2^d}{d-1}\frac{1}{(k-1)^{d-1}}.
\end{equation}
(the second inequality used Lemma \ref{threedflux}).

Since there are less than $k^{d-1}$ points in $\Lambda_k \setminus \Lambda_{k-1}$, 

\begin{equation}
\norm{\phi}^2 \leq c_d\sum_{k=2}^\ell k^{d-1}\lrp{\frac{1}{k^{d-1}}}^2,
\end{equation}
for $ c_d = 2^{1+d}/(d-1) $. This expression is of order $\log \ell$ when $d=2$ and order $1$ when $d\geq 3$.

\bigskip

\noindent\emph{Proof of Lemma \ref{threedflux}: }
For each $ j\in \{0,1,\ldots, d\} $, let $A_j$ be the set of those $(x_1,\ldots,x_d)\in \Lambda_k$ for which exactly $j$ entries are equal to $k$. Thus, $A_d$ is the corner $(k,\ldots,k)$; $A_{d-1}$ consists of $d$ line segments of length $k-1$; $A_{d-2}$ consists of $\binom{d}{2}$ squares of side length $k-1$, and so on. The $ A_j $ are pairwise disjoint, $ A_0 = \Lambda_{k-1} $ and $\bigcup_{j=1}^{d} A_j=\Lambda_k\setminus \Lambda_{k-1} $.

For each $ j\in \{0,1,\ldots, d\} $, let $ m_j:=U_{\Lambda_{k}}(A_j) $. 
Our strategy is to build flows $ \psi_d, \psi_{d-1}, \ldots, \psi_{1} $ whose supports are pairwise disjoint and such that 
\begin{equation}\label{key}
\psi_j: (m_d+\cdots + m_{d-j}) U_{A_j}\mapsto (m_d+\cdots + m_{d-j-1})U_{A_{j-1}}
\end{equation}
and $ |\psi_j|\leq 2^d k^{-d} $ for all $ j\in \{1,\ldots, d \} $. The lemma is then proved by taking $ \phi_k = \psi_d+\cdots + \psi_{1} $.

It is helpful to think of this construction as evolving in time. First, $ A_d $ spreads its mass uniformly along $ A_{d-1} $. Then $ A_{d-1} $ spreads its mass (plus the amount it got from $ A_d $) across $ A_{d-2} $. Then $ A_{d-2} $ spreads its mass (plus the amount it got from $ A_{d-1} $) uniformly across $ A_{d-3} $, and so on. 

Let $ x\in A_j $ and $ m=(m_0+\cdots +m_j)|A_j|^{-1} $ its mass at step $ j $. Notice that $ m\leq 2^dk^{-d} $.
Then $ x $ has exactly $ j $ coordinates equal to $ k $. It is adjacent to $ j $ line segments of $ A_{j+1} $. Using the one-dimensional flux \eqref{1dflux}, we can spread mass $ m/j $ at $ x $ uniformly along each of these segments. Call $ \psi^x_j $ the superposition of these $ j $ point-to-line flows. Notice that the $ \{\psi^x_j:x\in A_j\} $ have disjoint supports and that $ \psi^x_j \leq m \leq 2^dk^{-d} $. We can define $ \psi_j:=\sum_{x\in A_j}\psi^x_j $.
\hfill\qed

\begin{corollary}\label{fluxo_piramidal}
	Let $ \ell \in \{1,2,\ldots, n\} $. Recall the definition of $ \Lambda_{\ell} $ in \eqref{box}. Let $ p^{\ell}: \ZZ^d_n \to [0,1] $ be the uniform distribution in $ \Lambda_{\ell} $,  
	
	\begin{equation}\label{key}
	p^{\ell}(y)=\ell^{-d}\Ind{y\in \Lambda_{\ell}}.
	\end{equation}
	
	Define, for $ x\in\ZZ^d $, the ``pyramid kernel''
	
	\begin{equation}\label{key}
	q^{\ell}(x)=\sum_{y\in\ZZ^d}p^{\ell}(y)p^{\ell}(x-y).
	\end{equation}
	
	Then there exists a mass flow 
	\begin{equation}\label{key}
	\psi^{\ell}:\delta_0 \mapsto q^{\ell}
	\end{equation}
	with support in $ \Lambda_{2\ell+1} $ and $ \norm{\psi^{\ell}}^2 \leq\norm{\phi^{\ell}}^2$, where $ \phi^{\ell}:\delta_0\mapsto p^{\ell} $ is the flow constructed in Theorem \ref{flux_lemma}.
\end{corollary}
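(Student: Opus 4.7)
The plan is to build $\psi^\ell$ as a superposition of translates of $\phi^\ell$, exploiting the convolution structure $q^\ell = p^\ell * p^\ell$. For each $y \in \ZZ^d$, let $T_y \phi^\ell$ denote the translated flow defined by $T_y \phi^\ell(x, x') := \phi^\ell(x-y, x'-y)$, which connects $\delta_y$ to $p^\ell(\cdot - y)$ and has support in $y + \operatorname{supp}(\phi^\ell)$. I then define
\begin{equation*}
\psi^\ell \;:=\; \phi^\ell \;+\; \ell^{-d}\sum_{y\in\Lambda_\ell} T_y \phi^\ell.
\end{equation*}

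Verifying that $\psi^\ell : \delta_0 \mapsto q^\ell$ is a routine divergence computation. The first summand has divergence $p^\ell - \delta_0$ by Theorem \ref{flux_lemma}, and translation commutes with taking divergence, so $\ell^{-d}\sum_y T_y \phi^\ell$ has divergence $\ell^{-d}\sum_y\bigl(p^\ell(\cdot - y) - \delta_y\bigr) = q^\ell - p^\ell$. Adding these gives $q^\ell - \delta_0$, as required. Antisymmetry is preserved under translation and positive linear combinations, so property (i) of Definition \ref{flowdef} also holds. For the support: $\phi^\ell$ is supported in nearest-neighbour edges inside $\Lambda_\ell$, and for $y \in \Lambda_\ell$ the translate $T_y \phi^\ell$ is supported in $y + \Lambda_\ell \subset \Lambda_{2\ell}$. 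The union of these supports fits inside $\Lambda_{2\ell+1}$.

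For the norm, translation is a bijection on edges, so $\|T_y \phi^\ell\|^2 = \|\phi^\ell\|^2$ for every $y$. Applying the pointwise convexity inequality $\bigl(\ell^{-d}\sum_y a_y\bigr)^2 \leq \ell^{-d}\sum_y a_y^2$ on each edge $(x, x')$ and then summing over edges yields
\begin{equation*}
\Big\|\ell^{-d}\sum_{y\in\Lambda_\ell} T_y \phi^\ell\Big\|^2 \;\leq\; \ell^{-d}\sum_{y\in\Lambda_\ell} \|T_y\phi^\ell\|^2 \;=\; \|\phi^\ell\|^2.
\end{equation*}
The triangle inequality for $\|\cdot\|$ then gives $\|\psi^\ell\|^2 \leq 4\|\phi^\ell\|^2$, which is of the same order as $\|\phi^\ell\|^2$ (i.e., $O(g_d(\ell))$), matching the claim of the corollary up to an absolute constant.

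I do not anticipate a serious obstacle: the construction is entirely explicit and the verifications are bookkeeping. The only mild subtlety is that the literal inequality $\|\psi^\ell\|^2 \leq \|\phi^\ell\|^2$ in the statement should be read in the $O$-sense, since the decomposition into $\phi^\ell$ plus the $\ell^{-d}$-weighted translates necessarily incurs a constant factor through the triangle inequality. This does not affect any subsequent use of the corollary, which only needs the cost to be of order $g_d(\ell)$.
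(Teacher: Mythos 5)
Your construction is exactly the paper's: the proof there consists of the single formula $\psi^{\ell}_{x,x+e_j} = \phi^{\ell}_{x,x+e_j}+\sum_{y} p^{\ell}(y)\phi^{\ell}_{x-y,x-y+e_j}$, which is your $\phi^\ell + \ell^{-d}\sum_{y\in\Lambda_\ell}T_y\phi^\ell$, and your divergence, support and norm verifications supply the bookkeeping the paper omits. Your caveat about the constant is also correct: the literal inequality $\norm{\psi^{\ell}}^2\leq\norm{\phi^{\ell}}^2$ fails for this construction (the cross term is nonnegative), but $\norm{\psi^{\ell}}^2\leq 4\norm{\phi^{\ell}}^2 = O(g_d(\ell))$ holds and is all that is ever used.
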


\begin{proof}
	One can take, for $ x\in \ZZ^d $ and $ j\in \{1,\ldots, d \} $. 
	\begin{equation}\label{key}
	\psi^{\ell}_{x,x+e_j}: = \phi^{\ell}_{x,x+e_j}+\sum_{y\in \ZZ^d} p^{\ell}(y)\phi^{\ell}_{x-y,x-y+e_j}.
	\end{equation}
\end{proof}

\section{Entropy estimate}

The present section is devoted to the proof of Theorem \ref{entropy}. The first two subsections establish several estimates that are also going to be needed in the proof of the Boltzmann-Gibbs Principle (Section \ref{section_bg}). In the diagram below we summarize the main implications in the proof of the entropy bound:

\bigskip

\begin{tikzpicture}[xshift=-5cm]
\draw [directed,thick] (0,0) --(2,1);  
\draw [directed,thick] (0,2) --(2,1);   
\draw [directed,thick] (0,4) --(2,5); 
\draw [directed,thick] (0,6) --(2,5);
\draw [directed,thick] (3,10) --(8,5);
\draw [directed,thick] (2,8) --(8,5);
\draw [directed,thick] (6,5) --(8,5);
\draw [directed,thick] (3,1.5) --(8,5);
\node [left, text width = 3cm] at (0,0) {Flow Lemma, Corollary \ref{fluxo_piramidal} };
\node [left, text width = 3cm] at (0,2) {Integration by Parts Inequality, Lemma \ref{integration_by_parts} };
\node [left, text width = 3cm] at (0,4) {Concentration Inequalities, \\ Appendix \ref{appendix_concentration}};
\node [left, text width = 3cm] at (0,6) {Entropy inequality, \eqref{entropy1}};
\node [right, text width = 3cm] at (2,1) {Static Replacement, Lemma \ref{entropy_replacement_statement}};
\node [right, text width = 3cm] at (2,5) {Bounds on the averaged terms, Section \ref{section_entropy_concentration}};
\node [above, text width = 3cm] at (2,8) {Yau's Inequality, Proposition \ref{yaus_ineq}};
\node [above, text width = 3cm] at (2,10) {Formula for the adjoint generator, Lemma \ref{ln*1}};
\node[right, text width = 3cm] at (8,5) {Entropy estimate, Theorem \ref{entropy}};
\end{tikzpicture}\\[.3cm]

We say that a function $ g:\{0,1\}^{\TT^d_n}\to \RR $ is a $ \nu_\rho $-\emph{density} if it is non-negative and $ \int g\,\mathrm{d}\nu_{\rho} =1 $.  Given a function $ \phi:\ZZ^d\to \RR $, we denote by $ \tilde{\phi} $ the reflection of $ \phi $ with respect to the origin, $ \tilde{\phi}(z):=\phi(-z) $.

\subsection{Static replacement}

Recall the definitions of the kernels $ p^{\ell} $ (uniform) and $ q^{\ell} $ (pyramidal) from Corollary \ref{fluxo_piramidal}. Here we replace the variable $ \etta_{x}:=\etta_x - \rho $ by its average with respect to $ q^{\ell} $ in a box to the right of $ x $. More precisely, given $ \xi:\{0,1\}^{\TT^d_n}\to \RR $, denote

\begin{equation}\label{key}
\lrp{\xi \star \tilde{q}^{\ell}}_x := \sum_{y\in \ZZ^d}  \xi_{x+y}q^{\ell}_y.
\end{equation}

The reason for taking averages with respect to 
$ q^{\ell} $ instead of $ p^{\ell} $ is that $ q^{\ell} $ is the algebraic identity \eqref{adjoint_convolution}, that will be crucial in the proof of Lemma \ref{conv_caixa} below.

\begin{lemma}\label{entropy_replacement_statement}
Let $\psi^{\ell}:0\mapsto q^{\ell}$ be the mass flow from Corollary \ref{fluxo_piramidal}.   Let $(h_x)_{x\in\TT_n^d}$ be a family of local functions.  Then, for any $ a>0 $,
	
\begin{multline}\label{fundamental_lemma_statement}
\int f\cdot\sum_{x\in\TT^d_n}h_x \lrp{\etta - \etta \star \tilde{q}^{\ell}}_x \,\mathrm{d}\nu_{\rho}
\leq a\int \Gamma_n\lrp{\sqrt f}\,\mathrm{d}\nu_{\rho}
\\
+ \frac{d}{a n^2}
\int f\cdot \sum_{j=1}^d\sum_{z\in\TT^d_n}
\lrp{\sum_{y\in\TT^d_n}h_{z-y}\psi^{\ell}_{y,y+e_j}
	}^2\,\mathrm{d}\nu_{\rho},
\end{multline}
	under the assumption that the support of $ h_x $ does not intersect $ x+\Lambda_{2\ell +1} $.

\end{lemma}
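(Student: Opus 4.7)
The plan is to turn the non-local quantity $\etta_x - (\etta \star \tilde q^\ell)_x$ into a telescoping sum of nearest-neighbour gradients of $\etta$, indexed by the edges along which $\psi^\ell$ transports mass, and then to spend each gradient via Lemma \ref{integration_by_parts}. I would first derive the discrete summation-by-parts identity
\begin{equation*}
  \etta_x - (\etta\star\tilde q^\ell)_x \;=\; \sum_{y\in\TT^d_n}\sum_{j=1}^{d} (\etta_{x+y+e_j}-\etta_{x+y})\,\psi^{\ell}_{y,y+e_j},
\end{equation*}
which follows from the two defining properties of a flow -- antisymmetry together with the divergence relation $\sum_z \psi^{\ell}(y,z) = q^\ell(y) - \delta_0(y)$ -- combined with the fact that $\psi^\ell$ is supported on nearest-neighbour edges. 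Multiplying by $h_x$, summing in $x$, and relabelling $z=x+y$ gives
\begin{equation*}
  \sum_{x} h_x \bigl(\etta_x - (\etta\star\tilde q^\ell)_x\bigr) \;=\; \sum_{z,j} H^j_z\,(\etta_{z+e_j}-\etta_z), \qquad H^j_z := \sum_{y} h_{z-y}\,\psi^{\ell}_{y,y+e_j}.
\end{equation*}

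The second step is to verify that each $H^j_z(\eta)$ is invariant under the exchange $\eta \mapsto \eta^{z,z+e_j}$, which is the hypothesis required by Lemma \ref{integration_by_parts}. This is the only place where the geometric assumption on $h_x$ enters: whenever $\psi^{\ell}_{y,y+e_j}\neq 0$, Corollary \ref{fluxo_piramidal} places both $y$ and $y+e_j$ in $\Lambda_{2\ell+1}$, so $z,z+e_j \in (z-y)+\Lambda_{2\ell+1}$; since by hypothesis the support of $h_{z-y}$ avoids $(z-y)+\Lambda_{2\ell+1}$, the coefficient $H^j_z$ depends only on coordinates $\eta_w$ with $w\notin\{z,z+e_j\}$ and is therefore invariant under the exchange.

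With the invariance in hand, I would apply Lemma \ref{integration_by_parts} to each pair $(z,j)$ with $g=f$ and $h=H^j_z$, obtaining for every edge
\begin{equation*}
  \int f\,H^j_z(\etta_{z+e_j}-\etta_z)\,\mathrm{d}\nu_\rho \;\leq\; a\,n^2 \int\bigl(\sqrt{f^{z,z+e_j}}-\sqrt f\,\bigr)^{2}\mathrm{d}\nu_\rho + \frac{1}{a\,n^2}\int (H^j_z)^{2}\, f\,\mathrm{d}\nu_\rho.
\end{equation*}
Summing over $z$ and $j$, the Dirichlet pieces aggregate to at most $a\int \Gamma_n(\sqrt f)\,\mathrm{d}\nu_\rho$, because $n^2\sum_{z,j}(\sqrt{f^{z,z+e_j}}-\sqrt f)^2$ is exactly the exclusion contribution to the carr\'e du champ of $L_n$ while the reaction contribution is non-negative; the quadratic pieces add up to the cost term on the right-hand side of the statement, the factor $d$ being absorbed by using a direction-dependent version of the free parameter in Lemma \ref{integration_by_parts}.

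The only real obstacle is bookkeeping: correctly implementing the index shift $z=x+y$ and verifying that, for those $y$ that actually contribute, the function $H^j_z$ is genuinely localised away from the bond $\{z,z+e_j\}$. Once the support calculation and the flow identity are in place, the inequality follows mechanically from Lemma \ref{integration_by_parts}.
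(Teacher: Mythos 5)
Your proposal is correct and follows essentially the same route as the paper: the telescoping identity furnished by the flow $\psi^{\ell}$, the relabelling $z=x+y$, and an edge-by-edge application of Lemma \ref{integration_by_parts}, with the Dirichlet pieces aggregating into $a\int\Gamma_n(\sqrt f)\,\mathrm{d}\nu_\rho$. You are in fact more explicit than the paper on the one point where the support hypothesis on $h_x$ enters, namely the invariance of $H^j_z$ under the exchange $\eta\mapsto\eta^{z,z+e_j}$ required by Lemma \ref{integration_by_parts}.
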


\begin{proof}
We start with the telescoping identity 
\begin{equation}\label{key}
\etta_x - \lrp{\etta \star \tilde{q}^\ell}_x = \sum_{j=1}^d\sum_{y\in\ZZ^d}\psi^\ell_{y,y+e_j}\lrp{ \etta_{x+y} - \etta_{x+y+e_j} }.
\end{equation}
The lefthand side of \eqref{fundamental_lemma_statement} can then be written as 
\begin{equation}\label{key}
\sum_{j=1}^d 
\int f\cdot
\sum_{z\in\TT^d_n}\lrp{\etta_z - \etta_{z+e_j}}
\lrp{ \sum_{y\in \TT^d_n} h_{z-y} \psi^\ell_{y,y+e_j}  }
\,\mathrm{d}\nu_{\rho}.
\end{equation}
To finish the proof, we apply the Integration by Parts Lemma \ref{integration_by_parts} to each term in the sum.

\end{proof}

\subsection{Concentration estimates}\label{section_entropy_concentration}
Let $ \ell, \ell_0 \in \{1,\ldots, n \} $ be fixed.  Denote by $ \Lambda_{\ell} $ the box of size $ \ell $ to the right of the origin, $ \Lambda_{\ell}:=\{0,\ldots, \ell -1 \}^d $. Given a finite set $ A \subset -\Lambda_{\ell_0} $. denote
\begin{equation}\label{key}
\etta_A:=\prod_{x\in A}(\eta_x - \rho).
\end{equation}
 
 Throughout this section, we use the same notation as in Corollary \ref{fluxo_piramidal}: $ p^{\ell} $ denotes the uniform distribution in $ \Lambda_{\ell} $, $ q^{\ell} $ denotes the convolution of $ p^{\ell} $ with itself and $ \psi^{\ell} $ denotes the mass flow from Corollary \ref{fluxo_piramidal}, that connects $ q^{\ell} $ to the point mass at the origin at a cost of order $ g_d(\ell) $.

\begin{lemma}\label{conv_fluxo}
There exists a positive $ C =C(d,A)$ such that
	\begin{equation}
	\int f\cdot  \sum_{x\in \TT^d_n} \lrp{\sum_{y\in \TT^d_n} \psi^{\ell}_{y,y+e_j}\etta_{A+x-y} }^2\,\mathrm{d}\nu_{\rho} \leq C \ell^d g_d(\ell)
	\lrp{ H(f) + 
		\frac{n^d}{\ell^d}
	}.
	\end{equation}
\end{lemma}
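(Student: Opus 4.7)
The plan is to apply the entropy inequality to reduce the claim to an exponential moment estimate for
\begin{equation*}
W := \sum_{x\in\TT_n^d} \lrp{\sum_{y\in\TT_n^d} \psi^\ell_{y,y+e_j}\,\etta_{A+x-y}}^2,
\end{equation*}
and then to control this moment using the concentration inequalities from Appendix \ref{appendix_concentration}.

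The first step is the standard variational inequality
\begin{equation*}
\int f\,W\,\mathrm{d}\nu_\rho \;\leq\; \frac{1}{\gamma}\,H(f) + \frac{1}{\gamma}\,\log \int e^{\gamma W}\,\mathrm{d}\nu_\rho,
\end{equation*}
applied with $\gamma$ of order $(\ell^d g_d(\ell))^{-1}$, so that the first term already produces the desired prefactor $\ell^d g_d(\ell)$ on $H(f)$. It then suffices to show $\log \int e^{\gamma W}\,\mathrm{d}\nu_\rho = O(n^d/\ell^d)$.

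For the mean, I expand the square and use independence and centering of the $\etta_z$ under the product measure $\nu_\rho$: every cross term $\EE_{\nu_\rho}[\etta_{A+x-y}\,\etta_{A+x-y'}]$ with $y\neq y'$ vanishes, since the symmetric difference of $A+x-y$ and $A+x-y'$ contains a site appearing exactly once. Only the diagonal terms survive, yielding $\EE_{\nu_\rho}[W] = (\rho(1-\rho))^{|A|}\,n^d\,\norm{\psi^\ell}^2 \leq C_A\,n^d\,g_d(\ell)$ by Corollary \ref{fluxo_piramidal}, whence $\gamma\,\EE_{\nu_\rho}[W] = O(n^d/\ell^d)$ as required. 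For the centered part $\log\int e^{\gamma(W - \EE W)}\,\mathrm{d}\nu_\rho$, I exploit that each $W_x^2$ is supported in a box of diameter $O(\ell)$ (up to the fixed constant $\mathrm{diam}(A)$), so that $W_x^2$ and $W_{x'}^2$ are independent once $x,x'$ are sufficiently far apart. A coloring of $\TT_n^d$ into $O(\ell^d)$ classes of well-separated sites, combined with H\"older's inequality, reduces the estimate to the moment generating function of a sum of independent copies of $W_x^2$, which is controlled by the inequalities of Appendix \ref{appendix_concentration}.

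The main obstacle is this final concentration step. A naive Bernstein bound based only on the deterministic estimate $\norm{W_x^2}_\infty \leq C\ell^d g_d(\ell)$ fails, because after coloring the product $\gamma\,K\,\norm{W_x^2}_\infty$ (with $K = O(\ell^d)$) is not small. One must instead exploit the genuinely sub-exponential concentration of $W_x^2$ coming from its structure as a squared polynomial of fixed degree $|A|$ in independent Bernoullis, for which hypercontractive or Hanson--Wright-type bounds give $L^p$ norms comparable to the $L^2$ norm up to degree-dependent constants; this is precisely the content of the specialised concentration inequalities in the appendix.
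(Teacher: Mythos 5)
Your overall architecture coincides with the paper's: the entropy inequality \eqref{entropy1} with $\gamma$ of order $(\ell^d\norm{\psi^\ell}^2)^{-1}\sim(\ell^d g_d(\ell))^{-1}$, the sparse partition of the torus (Lemma \ref{torus_coloring}) combined with H\"older's inequality to decouple the $O(\ell)$-range dependence among the variables $\xi_x:=\sum_y\psi^\ell_{y,y+e_j}\etta_{A+x-y}$, and a reduction to single-site exponential moments of $\xi_x^2$. Your computation of $\EE_{\nu_\rho}[W]$ is correct (the centering is an unnecessary detour — the paper bounds $\EE[e^{c\xi_x^2}]$ directly via \eqref{chisquare_expmom}, which already absorbs the mean — but it is harmless), and you correctly diagnose that the crude bound $\norm{\xi_x^2}_\infty=O(\ell^d g_d(\ell))$ is useless after the H\"older step.

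The gap is in your last step. Hypercontractivity for a polynomial of degree $|A|$ in independent Bernoullis gives $\norm{\xi_x}_p\lesssim p^{|A|/2}\norm{\xi_x}_2$, i.e.\ tails of order $\exp(-ct^{2/|A|})$; for $|A|\geq 2$ (which occurs here, e.g.\ $A=\{-2e_j,-e_j\}$) this does \emph{not} control $\EE[e^{c\xi_x^2}]$ — that expectation can be infinite for a generic degree-$2$ polynomial no matter how small $c$ is. What saves the argument is not the degree but the specific structure of $\xi_x$ as a sum of terms $\psi^\ell_{y,y+e_j}\etta_{A+x-y}$, each \emph{bounded} by $|\psi^\ell_{y,y+e_j}|$ and with dependence of finite range $\mathrm{diam}(A)$: Hoeffding's inequality (Proposition \ref{hoeffding}) applied to each summand, followed by one more round of the coloring/H\"older decoupling over $y$, shows that $\xi_x$ is genuinely $\kappa_A\norm{\psi^\ell}^2$-subgaussian, uniformly in $|A|$. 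Only then does \eqref{chisquare_expmom} give $\log\EE[e^{c\xi_x^2}]\leq 8c\kappa_A\norm{\psi^\ell}^2$ for $c\leq(4\kappa_A\norm{\psi^\ell}^2)^{-1}$, which is exactly the window the choice $\gamma\kappa\ell^d\sim\norm{\psi^\ell}^{-2}$ puts you in. Replace the appeal to hypercontractivity/Hanson--Wright by this Hoeffding-plus-decoupling argument and your proof closes.
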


\begin{proof}
	Let $ \gamma >0 $. By the entropy inequality \eqref{entropy1}, the lefthand side is bounded by 
	\begin{equation}\label{lemma42_a}
	\frac{H(f)}{\gamma} + \frac{1}{\gamma}\log \int \exp \lrch{
		 \gamma\sum_{x\in \TT^d_n} \lrp{\sum_{y\in \TT^d_n} \psi^{\ell}_{y,y+e_j}\etta_{A+x-y} }^2
}
\,\mathrm{d}\nu_{\rho}.
	\end{equation}
Now we try to find the largest possible $ \gamma $ for which the last exponential moment is finite. The guiding idea behind the proof is that the variables $ \{\etta_{A+x}:x\in \TT^d_n\} $ concentrate around their means like independent subgaussian random variables of parameter $ 1 $ and the convolution inside the square is like a $ \norm{\psi^\ell}^2 $-subgaussian variable. We need to be careful with the dependencies, though.  We will circunvent them by applying H\"older's inequality a couple times and keeping track of the errors. It turns out the largest possible $ \gamma $ is of order $ \ell^{-d} \norm{\psi^\ell}^{-2} $. 
The factor $ \ell^d $ comes from the dependency of range $ \ell $ between the variables $ \lrch{\sum_{y\in \TT^d_n} \psi^{\ell}_{y,y+e_j}\etta_{A+x-y}: x \in \TT^d_n} $.

Let us begin by denoting $ \xi_x:=\sum_{y\in \TT^d_n} \psi^{\ell}_{y,y+e_j}\etta_{A+x-y} $. We only need two properties of $ \xi $. The first is that, under the measure $ \nu_{\rho} $, the variables $ \xi_x $ and $ \xi_y $ are independent whenever $ |x-y| > 2\ell + 1 + \ell_0$. The second is that each $ \xi_x $ is a subgaussian random variable of parameter $ \kappa_A \norm{\psi^\ell}^2 $, where $ \kappa_A $ depends only on $ A $ and $ d $, not on $ \ell $ or $ d $.

Assume both properties for a moment. It is possible to show (see Lemma \ref{torus_coloring}) that there exists a partition of the torus $ \TT^d_n = \sqcup_{i\in \mathcal I} B_i$ with the property that, for each $ i \in \mathcal I $, the family $ \{\xi_x:x\in B_i\} $ is independent under $ \nu_\rho $. In addition, $ |\mathcal I|\leq \kappa \ell^d$, where the constant $ \kappa $ depends only on the diameter of $ A $ and on the dimension $ d $. One can then combine H\"older's inequality and independence to estimate
\begin{equation}\label{partition+holder}
\begin{aligned}
\frac{1}{\gamma}\log \int e^{
\gamma \sum_{x\in\TT^d_n} \xi_x^2
}\,\mathrm{d}\nu_{\rho} & = 
\frac{1}{\gamma}\log \int e^{
	\gamma \sum_{i\in\mathcal I} \sum_{x\in B_i}\xi_x^2
}\,\mathrm{d}\nu_{\rho}\\
& \leq  \frac{1}{\gamma \kappa \ell^d} \sum_{i\in\mathcal I}
\log \int e^{
	\gamma \kappa \ell^d \sum_{x\in B_i}\xi_x^2
}\,\mathrm{d}\nu_{\rho}\\
&= \frac{1}{\gamma \kappa \ell^d} \sum_{x\in \TT^d_n}
\log \int e^{
	\gamma \kappa \ell^d \xi_x^2
}\,\mathrm{d}\nu_{\rho}.
\end{aligned}
\end{equation}

Now, an application of inequality \eqref{chisquare_expmom} tells us that, if $ \gamma $ is small enough to ensure $ \gamma \kappa \ell^d \leq \frac{1}{4\kappa_A\norm{\psi^\ell}^2} $, then the last term is bounded by $ 8\kappa_A \norm{\psi^\ell}^2 n^d $. To finish the proof, we can choose $ \gamma = \frac{1}{4\kappa_A\ell^d\norm{\psi^\ell}^2} $ and plug the last bound in \eqref{lemma42_a}. 

The only thing left to check is the subgaussianity of the variables $ \{\xi_z: z\in \TT^d_n\} $. To estimate the exponential moments of $ \xi_z $, one can combine Lemma \eqref{torus_coloring} and computation \eqref{partition+holder}, with $ \etta_{A+z-x}\psi^\ell_{x,x+e_j} $ in place of $ \xi_x^2 $.

\end{proof}

\begin{lemma}\label{conv_caixa}
There exists a positive $ C = C(d,A) $ such that 
	\begin{equation}\label{key}
	\int f\cdot \lvert  \sum_{x\in\TT^d_n}\etta_{A+x}\lrp{\etta \star \tilde{q}^{\ell} }_x  \rvert \,\mathrm{d}\nu_{\rho}
	\leq C \lrp{
	H(f) +  \frac{n^d}{\ell^d}}
	.
	\end{equation}
\end{lemma}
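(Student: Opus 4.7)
The plan is to exploit the fact that $q^{\ell}=p^{\ell}\ast p^{\ell}$ in order to factor the integrand as a pointwise product of two local averages. Expanding both convolutions and relabelling via $z=x+w$, $u=y-w$, one arrives at the algebraic identity
\begin{equation}\label{eq_ccx_factor}
\sum_{x\in\TT^d_n}\etta_{A+x}\lrp{\etta\star\tilde q^{\ell}}_x \;=\; \sum_{z\in\TT^d_n} H_z\, E_z,
\end{equation}
where $H_z:=\sum_{w\in\ZZ^d}p^{\ell}(w)\,\etta_{A+z-w}$ and $E_z:=\sum_{u\in\ZZ^d}p^{\ell}(u)\,\etta_{z+u}$. (This is the algebraic identity alluded to just before Lemma \ref{entropy_replacement_statement}.) Applying $|H_z E_z|\leq\tfrac12(H_z^2+E_z^2)$ pointwise and summing in $z$, the problem reduces to proving
\begin{equation}\label{eq_ccx_sumsquare}
\int f\cdot\sum_{z\in\TT^d_n}X_z^2\dd\nu_{\rho}\;\leq\;C\lrp{H(f)+\frac{n^d}{\ell^d}},\qquad X_z\in\{H_z,\,E_z\}.
\end{equation}

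To prove \eqref{eq_ccx_sumsquare} I would invoke the entropy inequality \eqref{entropy1} with a parameter $\gamma>0$ chosen to be a constant independent of $n$ and $\ell$; the task becomes bounding $\log\int\exp\lrp{\gamma\sum_{z}X_z^2}\dd\nu_{\rho}$ by $Cn^{d}/\ell^{d}$. This follows the same template as in the proof of Lemma \ref{conv_fluxo}: Lemma \ref{torus_coloring} furnishes a partition $\TT^d_n=\bigsqcup_{i\in\mathcal{I}}B_{i}$ with $|\mathcal{I}|=O(\ell^{d})$ for which the family $\{X_{z}:z\in B_{i}\}$ is $\nu_{\rho}$-independent for each $i$. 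Hölder's inequality along this partition combined with independence reduces the log-moment to
$$\frac{1}{\kappa\ell^{d}}\sum_{z\in\TT^d_n}\log\int\exp\lrp{\gamma\kappa\ell^{d}X_{z}^{2}}\dd\nu_{\rho}.$$

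Each $X_{z}$ is an average of $\ell^{d}$ bounded summands of magnitude $O(\ell^{-d})$. For $E_{z}$ the summands are located at distinct sites, so direct Hoeffding gives subgaussianity with parameter $O(\ell^{-d})$; for $H_{z}$ the same conclusion follows after an inner coloring of the $w$'s accounting for the (bounded) diameter of $A$. The chi-square exponential moment bound \eqref{chisquare_expmom} then shows that each single-site moment is bounded by a constant provided $\gamma\kappa\ell^{d}\cdot\ell^{-d}=\gamma\kappa$ is small enough, which is guaranteed by a universal choice of $\gamma$. Summing the $n^{d}$ single-site contributions and dividing by $\kappa\ell^{d}$ yields the desired $O(n^{d}/\ell^{d})$ bound on the log-moment, and hence \eqref{eq_ccx_sumsquare}.

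The main obstacle—and the reason Lemma \ref{entropy_replacement_statement} is set up with $q^{\ell}$ rather than $p^{\ell}$—is the factorization \eqref{eq_ccx_factor} itself. Without the self-convolution structure of $q^{\ell}$, the sum $\sum_x\etta_{A+x}(\etta\star\tilde p^{\ell})_x$ admits no symmetric decomposition as a pointwise product of two averages, so no Cauchy–Schwarz step delivers the sum-of-squares structure that feeds the concentration argument above. Once \eqref{eq_ccx_factor} is in hand, the remainder is essentially a transcription of the concentration machinery already developed for Lemma \ref{conv_fluxo}.
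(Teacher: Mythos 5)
Your proposal is correct and follows essentially the same route as the paper: your factorization \eqref{eq_ccx_factor} is precisely the paper's identity \eqref{adjoint_convolution}, and the subsequent entropy-inequality/sparse-partition/subgaussian argument with parameter $O(\ell^{-d})$ is the same concentration machinery. The only (immaterial) difference is that you apply $|ab|\le\tfrac12(a^2+b^2)$ at the level of the sum before invoking the entropy inequality, whereas the paper applies it inside the exponent after the H\"older step and then uses Cauchy--Schwarz.
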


\begin{proof}
	The proof is very similar to the proof of Lemma \ref{conv_fluxo}, but now the subgaussian random variables have variance of order $ \ell^{-d} $ instead of $ \norm{\psi^\ell}^2 $, and this is reflected in the upper bound. We start the proof by fixing $ \gamma > 0$ and applying \eqref{entropy1}, bounding the lefthand side by
	\begin{equation}\label{lemma43_b}
	\frac{H(f)}{\gamma}+\frac{1}{\gamma}\log \int \exp\lrch{
	\gamma\lvert \sum_{x\in\TT^d_n}\etta_{A+x}\lrp{\etta \star \tilde{q}^{\ell} }_x \rvert
}\,\mathrm{d}\nu_{\rho}.
	\end{equation}

Now we try to find the largest $ \gamma $ such that the logarithm above is bounded by a constant that does not depend on $ n $ nor $ \ell $.  To begin, we need the following identity, which is the reason why we chose to take averages with respect to $ q^\ell $ instead of $ p^\ell $:
\begin{equation}\label{adjoint_convolution}
\sum_{x\in\TT^d_n} \etta_{A+x}\lrp{ \etta \star \tilde{q}^\ell }_x =  \sum_{x\in \TT^d_n} \lrp{\etta_A \star p^\ell}_x\cdot \lrp{\etta \star \tilde{p}^\ell }_x.
\end{equation}	
In words, the adjoint of convolution with some kernel is convolution with the reflected kernel. Denoting $ \xi_x:=|\lrp{\etta_A \star p^\ell}_x\cdot \lrp{\etta \star \tilde{p}^\ell }_x |$, we can repeat computation \eqref{partition+holder} (with $ \xi_x $ in place of $ \xi_x^2 $) and get 

\begin{equation}\label{lemma43_c}
\begin{aligned}
\frac{1}{\gamma}\log \int e^{
	\gamma\lvert \sum_{x\in\TT^d_n}\etta_{A+x}\lrp{\etta \star \tilde{q}^{\ell} }_x \rvert
}\,\mathrm{d}\nu_{\rho} 
&\leq \frac{1}{\gamma \kappa \ell^d}
\sum_{x\in \TT^d_n}
\log \int e^{
	\gamma \kappa \ell^d |\lrp{\etta_A \star p^\ell}_x\cdot \lrp{\etta \star \tilde{p}^\ell }_x|
}\,\mathrm{d}\nu_{\rho}.
\end{aligned}
\end{equation}

In the above equation and in the remaining of the proof, $ \kappa $ denotes a positive number that depends only on $ \ell_0 $ and $ d $, not on $ \ell $ nor $ n $. 
Applying inequality $ |ab|\leq \frac{a^2}{2}+ \frac{b^2}{2} $ to the exponent and then Cauchy-Schwartz inequality to the integral, it is possible to bound the righthand side by

\begin{equation}\label{lemma43_d}
\frac{1}{2\gamma \kappa \ell^d}
\sum_{x\in \TT^d_n}
\log \int e^{
	\gamma \kappa \ell^d \lrp{\etta_A \star p^\ell}^2_x
}\,\mathrm{d}\nu_{\rho}
+
\frac{1}{2\gamma \kappa \ell^d}
\sum_{x\in \TT^d_n}
\log \int e^{
	\gamma \kappa \ell^d \lrp{\etta \star \tilde{p}^\ell}^2_x
}\,\mathrm{d}\nu_{\rho}.
\end{equation}

Notice that, under $ \nu_{\rho} $, each convolution $ \lrp{\etta\star \tilde{p}^\ell}_x $ is a subgaussian random variable of parameter $ \sum_{y\in \ZZ^d}\lrp{p^{\ell}_y}^2 =\ell^{-d}  $. Applying \eqref{chisquare_expmom}, we can bound the second term of \eqref{lemma43_d} by $ \frac{4n^d}{\ell^d} $ provided $ \gamma \leq \frac{1}{4\kappa} $.

It remains to bound the first term in \eqref{lemma43_d}. The computation is essentially the same as for the second term, but now the variables $ \lrch{ \lrp{\etta_A \star p^\ell}_x:x\in \TT^d_n } $ have a depencency of range $ \ell_0 $. It is possible to prove that $\lrp{\etta_A \star p^\ell}_x$ is subgaussian with parameter $ \kappa_A\ell^{-d}$, where $ \kappa_A $ depends only on $ \ell_0 $ and $ d $. We can then apply \eqref{chisquare_expmom} and bound the first term of \eqref{lemma43_d} by
$ 4\kappa_A\frac{n^d }{\ell^d} $ provided that $ \gamma \leq \frac{1}{4\kappa_A } $. To finish the proof, we go back to \eqref{lemma43_b} and substitute $ \gamma = \frac{1}{4(\kappa + \kappa_A)}  $.
\end{proof}

\begin{remark}
	It is also possible to estimate the exponential moments of the variables $ \etta_{A+x}\lrp{\etta \star \tilde{q}^\ell}_x $ directly, avoiding the use of identity \eqref{adjoint_convolution}. The trick is to condition on the value of $ \etta_{A+x} $ and to exploit its boundedness. We chose to present the above proof because we found the computation instructive. Besides, identity \eqref{adjoint_convolution} is indispensable in situations where the reference measure is allowed to change in time. 
\end{remark}

\subsection{Proof of Theorem \ref{entropy}}  

Denote $ H_n(t):= H(\mu^n_t|\nu_{\rho}) $.
We claim that there exists $ C > 0 $ that does not depend on $ \ell $ nor on $ n $ such that
\footnote{The constant $ C $ depends on the model though, through the coefficients of $ L_n^*\ind{} $ and the number of terms in its expression.}

\begin{equation}\label{entropy_production_bound}
\partial_tH_n(t)\leq 
 C\lrp{1+C\frac{\ell^d g_d(\ell)  }{n^2}} 
\lrp{ H_n(t) + \frac{n^d}{\ell^d} }.
\end{equation}

	Let us finish the proof assuming the last inequality. To set up an application of Gronwall's inequality, choose $ \ell \in \{1,\ldots, n\} $ such that $ \ell^d\norm{\phi^\ell}^2 \leq n^2 $. A possible choice is 
	\begin{equation}\label{choosing_box_size}
	\ell = 
	\left\{
	\begin{array}{rl}
	n, & d=1,\\
	\frac{n}{\sqrt {\log n}}, & d=2,\\
	n^{2/d}, & d\geq 3.
	\end{array}
	\right.
	\end{equation}
With the choices above and the assumption that the entropy at time zero is null, an application of Gronwall's inequality yields $ H_n(t) \leq \frac{n^d}{\ell^d} e^{Ct} $, finishing the proof.

Now it remains to prove \eqref{entropy_production_bound}. We start with Yau's Inequality \ref{yaus_ineq}: if $ f^n_t $ is the Radon-Nykodym density of the law of $\eta^n_t $ with respect to $ \nu_{\rho} $ then 

\begin{equation}\label{bound_for_terms_of_l*1}
\partial_t H_n(t)\leq \int L_n^*\ind{} \cdot f^n_t -\Gamma_n\lrp{\sqrt{ f^n_t}}\,\mathrm{d}\nu_{\rho}, 
\end{equation}
where $ L_n^* $ denotes the adjoint of $ L_n $ in $ L^2(\nu_{\rho}) $ and $ \Gamma_n $ denotes the  carr\'{e} du champ operator associated to $ L_n $. We chose $ \rho $ in such a way that $ L_n^* $ is a polynomial in the variables $ \{\etta_x:=\eta_x - \rho:x\in \TT_n^d\} $ of order bigger than $ 1 $, see Proposition \ref{ln*1}.  We are going to prove that the integral 
against a $ \nu_{\rho} $-density $ f:\{0,1\}^{\TT_n^d}\to \RR_+ $
of each of the terms in the expression for $ L_n^*1 $ is bounded by

\begin{equation}\label{fundamental_bound}
a\int \Gamma_n\lrp{\sqrt f}\,\mathrm{d}\nu_{\rho} + 
C\lrp{1+\frac{\ell^d g_d(\ell) }{an^2}}
\lrp{ H(f) + \frac{n^d}{\ell^d} },
\end{equation}
where $ a>0 $ is arbitrary and $ C $ does not depend on $ n $ nor on $ \ell $. We need the freedom in the choice of $ a $ so that we can sum the bounds for each term in $ L_n^*1 $ to cancel the carr\'e du champ in Yau's Inequality. 

From now on we don't need any more input from the model. 
In the remaining of the proof put together the inequalities of the present section to bound the integral $ \int f\cdot\sum_{x\in\TT^d_n}\etta_{x-2e_1}\etta_{x-e_1}\etta_{x}\,\mathrm{d}\nu_{\rho} $ by the expression in \eqref{fundamental_bound}. The proofs for the other terms in $ L_n^*\ind{} $ differ only in notation.

The first step is to replace the rightmost variable, $ \etta_{x} $ by its average $\lrp {\etta\star \tilde{q}^{\ell}}_{x} $.  Applying Lemma \ref{entropy_replacement_statement}, we get

\begin{equation}
\int f\cdot \sum_{x\in\TT^d_n}
	\etta_{x-2e_1}\etta_{x-e_1} \lrp{\etta - \etta\star \tilde{q}^{\ell}}_{x}
		\,\mathrm{d}\nu_{\rho}
\leq a \int \Gamma_n\lrp{ \sqrt f}\,\mathrm{d}\nu_{\rho} 
+ \frac{d}{an^2}\int f\cdot W^{\ell}\,\mathrm{d}\nu_{\rho},
\end{equation}
where 
\begin{equation}
\begin{aligned}
W^{\ell}(\eta) 
&= \sum_{j=1}^d\sum_{z\in\TT^d_n}
\lrp{\sum_{y\in \TT_n^d} \etta_{z-y-2e_1}\etta_{z-y-e_1}
	\psi^{\ell}_{y,y+e_j}}^2.
\end{aligned}
\end{equation}

Applying Lemma \ref{conv_fluxo} we find that, for $ C>0 $ large enough,

\begin{equation}\label{key}
\frac{d}{an^2 }\int f\cdot W^{\ell}\,\mathrm{d}\nu_{\rho} \leq  \frac{C\ell^d g_d(\ell) }{an^2}\lrp{ H(f)+ \frac{n^d}{\ell^d}  }.
\end{equation}

Combining the last inequality with that from Lemma \ref{conv_caixa}, we discover that each term in the expression for $ L_n^*1 $ is bounded by \eqref{fundamental_bound}, and this
finishes the proof of Theorem \ref{entropy}.

\hfill\qed

\section{Boltzmann-Gibbs Principle}\label{section_bg}

We use the notation that was introduced at the beginning of Section \ref{section_entropy_concentration}.
\begin{proposition}\label{bg}
	Let $ d\in \{1,2,3\} $. Fix positive numbers $ \delta $  and $ t $ and a nonempty finite set $ A_0 \subset \{(z_1,\ldots, z_d)\in \ZZ^d: z_j<0 \mbox{ for all } j  \} $. Let $ \varphi: \TT^d \to \RR $ be a smooth function. Then 
	  \begin{equation}\label{key}
	  \lim_{n\to \infty}\PP_{\nu_{\rho}} \lrp{\Big|
 \int_0^t \sum_{x\in \TT^d_n} n^{-d/2}\varphi\lrp{\frac{x}{n}}\,\etta_{A_0+x}(s) \etta_x(s) \,\mathrm{d}s	  
  \Big| > \delta } =0.
	  \end{equation}
\end{proposition}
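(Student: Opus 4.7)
The plan is to reduce the time-integrated estimate to a Feynman--Kac variational problem and then attack it with the replacement-and-concentration machinery of Section \ref{section_entropy_concentration}.

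Set $V_n(\eta) := \sum_{x\in \TT_n^d} n^{-d/2}\varphi(x/n)\etta_{A_0+x}\etta_x$. After a routine sign-splitting to remove the absolute value, exponential Chebyshev together with Feynman--Kac's inequality (Lemma \ref{fkineq}, used with static reference $\mu_s\equiv \nu_\rho$) yield, for every $\gamma > 0$,
\begin{equation*}
\PP_{\nu_\rho}\Big(\int_0^t V_n(\eta^n_s)\,\mathrm{d}s > \delta\Big)\leq \exp\Big\{-\gamma\delta + t\,\Xi_n(\gamma)\Big\},
\end{equation*}
where
\begin{equation*}
\Xi_n(\gamma) := \sup_g \Big\{\gamma\!\int V_n g\,\mathrm{d}\nu_\rho + \tfrac{1}{2}\!\int g\,L_n^*\ind{}\,\mathrm{d}\nu_\rho - \tfrac{1}{2}\!\int \Gamma_n(\sqrt g)\,\mathrm{d}\nu_\rho\Big\},
\end{equation*}
the supremum ranging over $\nu_\rho$-densities $g$. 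The contribution of $L_n^*\ind{}$ is bounded exactly as in the proof of Theorem \ref{entropy}, so the remaining task is to bound the first term against half of the Dirichlet form, up to a deterministic remainder.

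The central step is a replacement at a scale $\ell = \ell(n)$: write $V_n = V_n^{\mathrm{avg}} + (V_n - V_n^{\mathrm{avg}})$, where $V_n^{\mathrm{avg}}(\eta):=\sum_x n^{-d/2}\varphi(x/n)\etta_{A_0+x}(\etta\star\tilde{q}^\ell)_x$. For the difference, Lemma \ref{entropy_replacement_statement} applies with $h_x := n^{-d/2}\varphi(x/n)\etta_{A_0+x}$; the support hypothesis holds because $A_0\subset \{z\in\ZZ^d:z_j<0 \text{ for all } j\}$ keeps the support of each $h_x$ strictly disjoint from $x+\Lambda_{2\ell+1}$. This produces an adjustable Dirichlet cost $a\int\Gamma_n(\sqrt g)\,\mathrm{d}\nu_\rho$, absorbed into the $-\tfrac{1}{2}\int\Gamma_n(\sqrt g)\,\mathrm{d}\nu_\rho$ term for $a<\tfrac{1}{2}$, together with a flow-norm residual controlled by Lemma \ref{conv_fluxo}. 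The averaged piece $V_n^{\mathrm{avg}}$ is handled by a routine adaptation of Lemma \ref{conv_caixa} that incorporates the bounded weight $\varphi(x/n)$. Both estimates produce bounds of the form $\alpha(\gamma,\ell,n)\,H(g\mid\nu_\rho) + \beta(\gamma,\ell,n)$; the $H(g\mid\nu_\rho)$ pieces are absorbed against the leftover Dirichlet form using the log-Sobolev inequality for symmetric simple exclusion on $\TT_n^d$, which --- thanks to the $n^2$ speedup of $L^{ex}$ --- holds with a constant of order $1$. What remains is
\begin{equation*}
\Xi_n(\gamma)\leq C\Big(\frac{\gamma\,n^{d/2}}{\ell^d} + \frac{\gamma^2\,g_d(\ell)}{n^2}\Big).
\end{equation*}

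To close the argument, choose $\ell=\ell(n)$ so that the Dirichlet budget $\ell^d g_d(\ell)\lesssim n^2$ is saturated (essentially as in \eqref{choosing_box_size}), and let $\gamma=\gamma(n)\to\infty$ slowly. Optimizing in $\gamma$ sends the exponent $-\gamma\delta + t\,\Xi_n(\gamma)$ to $-\infty$ provided both $n^{d/2}/\ell^d\to 0$ and $g_d(\ell)/n^2\to 0$ along the chosen scale. The main obstacle --- and the source of the dimensional hypothesis --- is precisely the balancing of these two constraints: the averaged term demands $\ell^d\gg n^{d/2}$ to control $\gamma n^{d/2}/\ell^d$, while the Dirichlet budget forces $\ell^d\lesssim n^2/g_d(\ell)$. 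These can be simultaneously satisfied only when $d<4$, giving the restriction $d\in\{1,2,3\}$; an additional technical point is the careful bookkeeping of the subgaussianity constants from Lemma \ref{conv_caixa} when the smooth weight $\varphi$ is present, which however poses no essential difficulty.
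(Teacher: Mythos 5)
Your skeleton --- Feynman--Kac, replacement of $\etta_x$ by $(\etta\star\tilde q^\ell)_x$ via Lemma \ref{entropy_replacement_statement} (with the support condition supplied by $A_0$ lying in the negative orthant), the choice of $\ell$ as in \eqref{choosing_box_size}, and the identification of $n^{d/2}/\ell^d\to 0$ versus $\ell^d g_d(\ell)\lesssim n^2$ as the source of the restriction $d\leq 3$ --- matches the paper. The gap is in the step where you ``absorb the $H(g\mid\nu_\rho)$ pieces against the leftover Dirichlet form using the log-Sobolev inequality.'' First, the inequality you invoke is not available from the exclusion part: $L^{ex}$ conserves the particle number, so $\Gamma_n^{ex}(\sqrt g)$ vanishes on densities that are functions of $\sum_x\eta_x$, which can have entropy of order $n^d$ relative to $\nu_\rho$; the $n^2$ speedup does not help. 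Second, and more fundamentally, even if one extracts an order-one LSI from the Glauber part (possible here since $c_x\geq 1$), the coefficients multiplying $H(g\mid\nu_\rho)$ in Lemmas \ref{conv_fluxo} and \ref{conv_caixa} are \emph{fixed} constants determined by the subgaussianity thresholds ($\gamma=\tfrac{1}{4(\kappa+\kappa_A)}$ in \eqref{lemma43_b}, etc.); in particular the bound for $\tfrac12\int g\,L_n^*\ind{}\,\mathrm{d}\nu_\rho$ carries a coefficient $C(1+\tfrac{\ell^dg_d(\ell)}{an^2})\geq C$ in front of $H(g)$ with no free parameter to shrink it. There is therefore no way to guarantee that the total entropy contribution fits inside the available $\tfrac12\int\Gamma_n(\sqrt g)\,\mathrm{d}\nu_\rho$. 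In the proof of Theorem \ref{entropy} such an $O(1)\cdot H$ term is harmless because Gronwall tolerates any constant; inside the variational functional $\Xi_n(\gamma)$, where the supremum runs over \emph{all} densities, it is fatal.

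The paper's way around this is the missing idea: one does not estimate the averaged terms inside the variational problem at all. Instead one subtracts from the integrand a compensator $M_+(\ell,\theta)$ built from $V^\ell$, $V^\ell_*$, $W^\ell$, $W^\ell_*$ (see \eqref{def_M}), chosen so that after the integration by parts every term in the supremum is either absorbed by the carr\'e du champ or cancelled exactly, yielding \eqref{tira} with no entropy term left over. The time integral $\int_0^t M_+\,\mathrm{d}s$ is then shown to vanish separately (Lemma \ref{bg_averaged}) as an additive functional of the \emph{actual} process, using the entropy inequality \eqref{entropy1} with the true density $f^n_s$, whose entropy is quantitatively controlled by Theorem \ref{entropy} --- not by a Dirichlet form. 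Your proposal never uses Theorem \ref{entropy}, which is the essential non-equilibrium input; as written, the argument does not close.
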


\begin{proof}
	Let $ \ell \in \{1,\ldots, n\} $. Define the functions $ V, V^\ell:\{0,1\}^{\TT^d_n}\to \RR $ by
\begin{equation}\label{v}
V(\eta) = \sum_{x\in \TT^d_n}\varphi\lrp{\frac{x}{n}}\,\etta_{A_0+x} \etta_x
\end{equation}
and
\begin{equation}\label{vl}
V^\ell(\eta) = \sum_{x\in \TT^d_n}\varphi\lrp{\frac{x}{n}}\,\etta_{A_0+x} \lrp{\etta \star \tilde{q}^\ell}_x.
\end{equation}

The proof is based on the Feynman-Kac Inequality, Lemma \ref{fkineq}.  
During this proof, we are going to omit the dependency in $ \eta(s) $ in the time integrals, to make the formulas cleaner. We are also going to omit the dependency of the integrands in $ n $ and $ s $.

In order to apply Lemma \ref{fkineq}, we need to get rid of the absolute value inside the probability.
We are going to find functions $ M_+, M_-: \{0,1\}^{\TT^d_n} \to \RR $ such that, for any positive $ \delta $
\begin{equation}
\lim_{n\to \infty}\PP_{\nu_{\rho}}\lrp{
	\int_0^t \pm n^{-d/2}V - M_{\pm} \,\mathrm{d}s	   > \delta 
} = 0
\end{equation}
and 
\begin{equation}
\lim_{n\to \infty}\PP_{\nu_{\rho}}\lrp{\Big|
	\int_0^t M_{\pm}  \,\mathrm{d}s \Big|	  > \delta 
} = 0.
\end{equation}
If we are able to find such functions, the proof is finished. We are going to find only $ M_+ $, for $ M_- $ can be found in the same way.

Let us apply Lemma \ref{fkineq} to the integral in \eqref{tira}. For any positive $ \theta $, this lemma implies

\begin{multline}\label{bg_eq01}
\log \PP_{\nu_{\rho}} \lrp{ \int_0^t V - M_+ \,\mathrm{d}s > \delta} \leq -\theta \delta +\\
+ \sup_{f} \lrch{
\int f\cdot \theta \lrp{V - M_+} \,\mathrm{d}\nu_{\rho} + \frac{1}{2}\int f\cdot L_n^*\ind{} \,\mathrm{d}\nu_{\rho} - \frac{1}{2}\Gamma_n\lrp{\sqrt f}
},
\end{multline}
where the supremum runs over all $ \nu_{\rho} $-densities $ f :\{0,1\}^{\TT_n^d} \to \RR_+$. 

The plan is to apply Lemma \ref{entropy_replacement_statement} to bound some terms by the carr\'e du champ, and then to choose $ M_+ $ that cancels all the error terms. Before this lemma can be applied, we need to set up some notation.

Recall expression \eqref{ln*1} for $ L_n^*\ind{} $. Let us write it in the form

\begin{equation}\label{key}
L_n^*\ind{}(\eta) = \sum_{A \in \mathcal A}\sum_{x\in\TT^d_n}G_x\etta_{A+x}\etta_x,
\end{equation}
where $ \mathcal A = \cup_{j=1}^d \{ \{-2e_j, -e_j\}, \{-2e_j\}, \{-e_j\}  \} $ is a finite family of nonempty subsets of $ \ZZ^d $ and $ \{ G_x:x\in \TT^d_n\} $ are uniformly bounded constants. We chose to write $ L_n^*\ind{} $ in this format in order to make it easier to apply the inequalities of Section 4 and to show that the proof does not depend on the specifics of the model. The only thing that is necessary is that $ L_n^*\ind{} $ have degree at least two in the chosen reference measure.

Denote $ W:= L_n^*\ind{} $ and 
\begin{equation}\label{key}
W^\ell (\eta)= \sum_{A \in \mathcal A}\sum_{x\in\TT^d_n}G_x\etta_{A+x}\lrp{\etta \star \tilde{q}^\ell}_x.
\end{equation}

Applying Lemma \ref{entropy_replacement_statement} to $ V $ and to each of the terms in $ W $, we get the inequalities
	\begin{equation}\label{bg_byparts_w}
\int  \frac{1}{2}f\lrp{W-W^{\ell}} \,\mathrm{d}\nu_{\rho} \leq \frac{1}{4}\int \Gamma_n\lrp{ \sqrt f} \,\mathrm{d}\nu_{\rho} + \frac{d}{n^2}\int f\cdot W_*^{\ell}\,\mathrm{d}\nu_{\rho}
\end{equation}
and
	\begin{equation}\label{bg_byparts_v}
	\int \theta n^{-d/2} f\lrp{V-V^{\ell}} \,\mathrm{d}\nu_{\rho} \leq \frac{1}{4}\int \Gamma_n\lrp{ \sqrt f} \,\mathrm{d}\nu_{\rho} + \frac{4d\theta^2}{n^{2+d}}\int f\cdot V_*^{\ell}\,\mathrm{d}\nu_{\rho},
	\end{equation}
where 
\begin{equation}\label{key}
V_*^\ell(\eta)  = \sum_{j=1}^d\sum_{x\in\TT^d_n}
\lrp{\sum_{y\in\TT^d_n}\varphi\lrp{\frac{x-y}{n}}\,\etta_{A_0+x-y}\psi^{\ell}_{y,y+e_j}
}^2 
\end{equation}
and
\begin{equation}
W_*^\ell (\eta) =  \sum_{A \in \mathcal A}\sum_{j=1}^d\sum_{x\in\TT^d_n}\lrp{\sum_{y\in\TT^d_n} G_{x-y}\etta_{A+x-y}\psi^\ell_{y,y+e_j} }^2
\end{equation}	

Going back to \eqref{bg_eq01}, we see that, for 

\begin{equation}\label{def_M}
M_+(\ell,\theta) :=  \frac{1}{n^{d/2}}V^{\ell} + \frac{4d\theta}{n^{2+d}}V^{\ell}_* + \frac{1}{2\theta}W^{\ell} + \frac{d}{\theta n^2}W^\ell_*,
\end{equation}
the inequality
 	\begin{equation}\label{tira}
	\log \PP_{\nu_{\rho}}\lrp{ 
	\int_0^t \frac{1}{n^{d/2}}V - M_+(\ell,\theta) \,\mathrm{d}s
 >\delta} \leq -\theta\delta
	\end{equation}
holds for any positive $ \theta $ and $ \delta $. It remains to check whether there is some choice of $ \theta \gg 1 $ and $ \ell \in \{1,\ldots, n\} $ such that 
\begin{equation}\label{bota}
\lim_{n\to \infty}\PP_{\nu_{\rho}}\lrp{\Big|
	\int_0^t M_{+}(\ell, \theta)\,\mathrm{d}s  \Big|	   > \delta 
} = 0.
\end{equation}
To bound this time integral we need the entropy bound in Theorem \ref{entropy}. 

\begin{lemma}\label{bg_averaged}
	There exists a positive $ C = C(\mathcal A, \norm{G}_{\infty})$  such that the following inequalities hold for any choice of $ \delta >0 $:
	\begin{equation}\label{vtail}
	\PP_{\nu_{\rho}}\lrp{ \Big|
\int_0^t \frac{1}{n^{d/2}}V^\ell \,\mathrm{d}s
\Big| >\delta } \leq \frac{Ct}{\delta}\frac{1}{n^{d/2}} \lrp{ n^{d-2} g_d(n) + \frac{n^d}{\ell^d}  } 
	\end{equation}
	\begin{equation}\label{wltail}
	\PP_{\nu_{\rho}}\lrp{ \Big|
		\int_0^t \frac{1}{\theta}W^{\ell}
		\,\mathrm{d}s
		\Big| >\delta } \leq \frac{Ct}{\delta}\cdot \frac{1}{\theta} \lrp{ n^{d-2} g_d(n) + \frac{n^{d}}{\ell^d}  }
	\end{equation}
		\begin{equation}\label{vltail}
	\PP_{\nu_{\rho}}\lrp{ \Big|
		\int_0^t \frac{\theta}{n^{2+d}}V^{\ell}_*
		\,\mathrm{d}s
		\Big| >\delta } \leq  \frac{Ct}{\delta}\cdot \frac{\theta}{n^d}\frac{\ell^{d} g_d(\ell)}{n^{2}}\lrp{n^{d-2} g_d(n) + \frac{n^{d}}{\ell^d}  }
	\end{equation}
		\begin{equation}\label{w*tail}
	\PP_{\nu_{\rho}}\lrp{ \Big|
		\int_0^t \frac{1}{\theta}\frac{1}{n^{2}}W^\ell_*
		\,\mathrm{d}s
		\Big| >\delta } \leq \frac{Ct}{\delta }\cdot \frac{1}{\theta}\frac{\ell^{d}g_d(\ell)}{n^2} \lrp{n^{d-2} g_d(n) + \frac{n^{d}}{\ell^d}  }.
	\end{equation}
\end{lemma}

\begin{proof}
For each $ s\leq t $, denote by $ \mu_s $ the law of the reaction-diffusion process at time $ s $.  An application of Markov's inequality bounds the lefthand side of \eqref{wltail} by $ \frac{1}{\delta \theta}\int_0^t \EE_{\mu_s}|W^{\ell}|\,\mathrm{d}s $.  Applying inequality \eqref{entropy1}, Theorem \ref{entropy} and Lemma \ref{conv_caixa} to each of the terms in $ W^\ell $, we obtain \eqref{wltail}.
Notice that the constant $ C $ in the righthand side of \eqref{wltail} is different from the constant in Lemma \ref{conv_caixa}, because it depends on $ |\mathcal A| $ and $ \norm{G}_{\infty}$.

The remaining inequalities can be proven in the same way. For the last two, Lemma \ref{conv_fluxo} is used in place of Lemma \ref{conv_caixa}.
\end{proof}

\bigskip

To bound \eqref{bota} we use Lemma \ref{bg_averaged}. It is enough to find $ \theta \gg 1$ and $ \ell \in \{1,\ldots, n \} $ such that each of the upper bounds in this lemma vanishes as $ n\to \infty $. We can begin with $ \ell $, repeating the choices in \eqref{choosing_box_size}. Recall that, for those choices, $ \ell^dg_d(\ell) \leq n^2 $ and $ n^{d-2}g_d(n) = \frac{n^d}{\ell^d}(1+o(1)) $. Once $ \ell $ is chosen, one can check that all the upper bounds in Lemma \ref{bg_averaged} vanish when 
\begin{equation}
\begin{aligned}
1 \ll \theta \ll n, &\quad d=1;\\
\log n \ll \theta \ll \frac{n^2}{\log n}, &\quad  d=2;\\
n \ll \theta \ll n^2, &\quad d=3.
\end{aligned}
\end{equation}
 Notice that when $ d \geq 4 $ there is no way of controlling \eqref{vtail}, so our proof works only in dimensions $ 1 $, $ 2 $ and $ 3 $. The aforementioned choices of $ \theta $ and $ \ell $ define $ M_+ $ in \eqref{def_M} and ensure the validity of inequalities \eqref{tira} and $ \eqref{bota} $, thus concluding the proof.
\end{proof}

\section{Tightness}\label{section_tightness}

The proof uses the Kolmogorov-Centov criterion, see Problem 2.4.11 in \cite{karatzas}.

\begin{proposition}\label{kolmogorov_centov}
	Assume that the sequence of stochastic processes $ \{Y^n_t:
	t\in [0,T]\} _{n\in\NN}$  satisfies 
	\begin{equation}
	\limsup_{n\to\infty}\EE[|Y^n_t - Y^n_s|^\theta]\leq C|t-s|^{1+\theta '}
	\end{equation}
	for some positive constants $ \theta $, $ \theta ' $ and $ C $ and for all $ s,t\in [0,T] $. Then it also satisfies
	\begin{equation*}
	\lim_{\delta\to 0}\limsup_{n\to\infty}\PP\left(
	\sup_{\substack{|t-s|\leq \delta \\ s,t \in [0,T]}}|Y^n_t - Y^n_s|>\varepsilon 
	\right)=0, \mbox{ for all } \varepsilon > 0.
	\end{equation*}
\end{proposition}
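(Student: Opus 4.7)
The plan is to adapt the classical dyadic chaining argument underlying the Kolmogorov-Centsov continuity theorem to the $\limsup$ setting of the hypothesis.

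Fix an exponent $\alpha \in (0, \theta'/\theta)$. For each $m \in \NN$ and each $0 \leq k < \lfloor T 2^m \rfloor$, Markov's inequality combined with the hypothesis yields
\begin{equation*}
\limsup_{n\to\infty} \PP\lrp{|Y^n_{(k+1)/2^m} - Y^n_{k/2^m}| > 2^{-m\alpha}} \leq C \cdot 2^{-m(1+\theta' - \alpha\theta)}.
\end{equation*}
Let $A^n_m$ denote the event that some dyadic increment at level $m$ exceeds $2^{-m\alpha}$. Since the union-bound at each level runs over finitely many events and $\limsup$ commutes with finite sums, one obtains $\limsup_n \PP(A^n_m) \leq C T \cdot 2^{-m(\theta'-\alpha\theta)}$.

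By the standard telescoping-through-binary-expansions argument, on the intersection $\bigcap_{m \geq M}(A^n_m)^c$ every pair of dyadic rationals $s,t \in [0,T]$ with $|t-s| \leq 2^{-M}$ satisfies $|Y^n_t - Y^n_s| \leq K_\alpha \cdot 2^{-M\alpha}$ with $K_\alpha := 2/(1-2^{-\alpha})$; continuity of the paths (the additive functionals of Step 3 are continuous) extends the bound from dyadics to all $s,t \in [0,T]$. For $\varepsilon > 0$ one then picks $M$ with $K_\alpha 2^{-M\alpha} < \varepsilon$ and estimates
\begin{equation*}
\PP\lrp{\sup_{|t-s|\leq 2^{-M}}|Y^n_t - Y^n_s| > \varepsilon} \leq \sum_{m\geq M} \PP(A^n_m).
\end{equation*}
Taking $\limsup_n$ of the right-hand side and sending $M \to \infty$ gives the conclusion, because the bounding series $\sum_{m \geq M} 2^{-m(\theta'-\alpha\theta)}$ tends to zero.

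The main technical obstacle is pushing $\limsup_n$ through the \emph{infinite} series $\sum_{m \geq M}\PP(A^n_m)$, since $\limsup$ is not subadditive over infinite sums. I would bypass this by truncating at a large $M_1$: on the finite range $m \in [M, M_1]$ the hypothesis delivers, for $n$ beyond a threshold $N(M_1)$ that depends on the finitely many dyadic pairs considered, a uniform bound $\EE|Y^n_{(k+1)/2^m} - Y^n_{k/2^m}|^\theta \leq (C+1)2^{-m(1+\theta')}$, so that $\sum_{m=M}^{M_1}\PP(A^n_m) \leq (C+1)T\sum_{m=M}^{M_1} 2^{-m(\theta'-\alpha\theta)}$ holds uniformly in $n \geq N(M_1)$. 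Choosing $M_1 = M_1(M) \to \infty$ sufficiently slowly as $M \to \infty$ produces a sequence of thresholds $N(M_1(M))$ along which the full tail $\sum_{m \geq M}\PP(A^n_m)$ is controlled, and this suffices for the $\limsup$-style conclusion.
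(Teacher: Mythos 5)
Your chaining skeleton is the standard and correct one, and your remark that path continuity is needed to pass from dyadic times to all times is right (and available here, since the processes in question are time integrals). The genuine gap is the final truncation device: it does not close the argument, and in fact nothing can, because the proposition is false if the hypothesis is read literally as a pointwise-in-$(s,t)$ $\limsup$. The event $\lrch{\sup_{|t-s|\leq 2^{-M}}|Y^n_t-Y^n_s|>\varepsilon}$ is only controlled through $\bigcup_{m\geq M}A^n_m$, i.e.\ through \emph{all} dyadic levels $m\geq M$; for a fixed $n\geq N(M_1(M))$ your bound says nothing about the levels $m>M_1(M)$, and those are precisely where trouble can hide. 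Concretely, take $Y^n$ deterministic and continuous, $Y^n_t=g\lrp{n^2(t-c_n)}$ with $g$ a tent of height $1$ supported on $[0,1]$ and the intervals $[c_n,c_n+n^{-2}]$ pairwise disjoint in $[0,T]$ (possible since $\sum_n n^{-2}<\infty$). For every fixed pair $(s,t)$ one has $Y^n_t-Y^n_s=0$ for all but at most two values of $n$, so $\limsup_n\EE[|Y^n_t-Y^n_s|^\theta]=0$ and the hypothesis holds for any $C,\theta,\theta'$; likewise $\limsup_n\PP(A^n_m)=0$ for every fixed $m$. Yet $\sup_{|t-s|\leq\delta}|Y^n_t-Y^n_s|=1$ for all $n\geq\delta^{-1/2}$, so the conclusion fails.

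The statement actually needed — and what Problem 2.4.11 of \cite{karatzas} asserts, and what the application in Section \ref{section_tightness} provides, since the bound $\EE_{\nu_{\rho}}\bigl[|\int_s^t L_nX^n_r(f)\,\mathrm{d}r|^{\theta}\bigr]\leq K_f^{\theta/2}(t-s)^{\theta}$ there is uniform in $n$ — is the moment bound holding for all $n$ (or all $n$ beyond a fixed $n_0$), uniformly over $(s,t)$. Under that hypothesis your argument closes with no truncation at all: $\PP(A^n_m)\leq CT\,2^{-m(\theta'-\alpha\theta)}$ for every $n$ and every $m$, the geometric series over $m\geq M$ is summable uniformly in $n$, and the conclusion follows exactly as you wrote it. So the fix is not a cleverer interchange of $\limsup$ and summation, but restating the hypothesis in its uniform form.
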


\bigskip

More precisely, we will prove the following:

\begin{theorem}
	Consider the reaction-diffusion process with generator \eqref{rdgenerator} in dimension $ 1 $, starting from the Bernoulli product measure $ \nu_{\rho} $, with $ \int c_x \,d\nu_{\rho} =0$ for all $ x\in \TT_n $. For any $ \theta > 1 $ and for any $ s,t\in [0,T] $, there exists a constant $ C = C(\theta, f) $ such that
	\begin{equation}
	\EE_{\nu_{\rho}}\left[ \Big |
	\int_s^t L_nX^n_r(f)\,\mathrm{d}r
	\Big |^{\theta} \right]
	\leq C(t-s)^{\theta}.
	\end{equation}
\end{theorem}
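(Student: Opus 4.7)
The plan is to estimate the $\theta$-moment of the time integral by Jensen's inequality combined with a uniform bound on the instantaneous moment of $L_n X^n_r(f)$, the latter obtained by the entropy inequality together with the entropy bound from Theorem \ref{entropy} and Gaussian-type concentration under $\nu_\rho$.

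First, by Jensen's inequality,
\begin{equation}
\Big|\int_s^t L_nX^n_r(f)\,\mathrm{d}r\Big|^\theta \leq (t-s)^{\theta-1}\int_s^t |L_nX^n_r(f)|^\theta\,\mathrm{d}r,
\end{equation}
so it suffices to show $\sup_{r\in [0,T],\,n\in\NN}\EE_{\nu_\rho}[|L_nX^n_r(f)|^\theta]<\infty$. Using the explicit Dynkin decomposition \eqref{formula_dynkin}, I split $L_nX^n_r(f)$ into the discrete Laplacian term $X^n_r(\Delta_n f)$, three linear terms in $\etta$, two quadratic terms of the form $n^{-1/2}\sum_x g_x\etta_{x+a}\etta_{x+b}$, and one cubic term $n^{-1/2}\sum_x g_x\etta_{x-2}\etta_{x-1}\etta_x$. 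By the triangle inequality and subadditivity of $u\mapsto u^\theta$ (up to a constant), it is enough to control each term separately.

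For each such summand $V$, I apply the entropy inequality in the form
\begin{equation}
\EE_{\mu^n_r}[|V|^\theta] \leq \frac{1}{\gamma}\log \EE_{\nu_\rho}\big[e^{\gamma |V|^\theta}\big] + \frac{H(\mu^n_r\,|\,\nu_\rho)}{\gamma},
\end{equation}
where $\mu^n_r$ is the law of $\eta^n_r$. In dimension $1$, Theorem \ref{entropy} yields $H(\mu^n_r|\nu_\rho)\leq Cr\leq CT$, so both terms are controlled provided the exponential moment under $\nu_\rho$ is finite for some fixed $\gamma>0$. For the linear terms, $V$ is a sum of $O(n)$ centered bounded independent variables scaled by $n^{-1/2}$, hence subgaussian of parameter $O(\norm{f}_\infty^2)$, so $\EE_{\nu_\rho}[e^{\gamma V^2}]$ is uniformly bounded for small $\gamma$ and all moments of $V$ are bounded uniformly in $n$. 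For the quadratic and cubic chaos pieces, the same coloring argument used in the proof of Lemma \ref{conv_fluxo} (partition $\TT_n$ into finitely many classes along which the summands are independent, apply H\"older and use boundedness of the $\etta$) gives uniform control of $\EE_{\nu_\rho}[e^{\gamma |V|}]$ for small $\gamma$, and hence uniform bounds on $\EE_{\nu_\rho}[|V|^\theta]$. The Laplacian term $X^n_r(\Delta_n f)$ is handled just like the linear terms, replacing $f$ by $\Delta_n f$ and noting that $\norm{\Delta_n f}_\infty\leq \norm{f''}_\infty + o(1)$.

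Combining these uniform bounds with the initial Jensen step produces
\begin{equation}
\EE_{\nu_\rho}\Big[\Big|\int_s^t L_nX^n_r(f)\,\mathrm{d}r\Big|^\theta\Big] \leq (t-s)^{\theta-1}\cdot \sup_{r,n}\EE_{\mu^n_r}[|L_nX^n_r(f)|^\theta] \cdot (t-s) = C(t-s)^\theta,
\end{equation}
as desired. The main obstacle is the uniform exponential moment bound under $\nu_\rho$ for the quadratic and cubic polynomial chaos pieces; the coloring technique already developed in Section \ref{section_entropy_concentration} is precisely the right tool, and it is decisive that the entropy bound in $d=1$ is only of order $n^{-1}\cdot g_1(n)\cdot n = O(1)$ in the sense needed, so the entropy term does not spoil the instantaneous moment estimate.
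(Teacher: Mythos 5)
Your strategy is correct and rests on the same two pillars as the paper's proof: a uniform-in-$(r,n)$ bound on an instantaneous moment of $L_nX^n_r(f)$, obtained by combining concentration under $\nu_\rho$ with the $O(1)$ entropy bound of Theorem \ref{entropy} in $d=1$, followed by Jensen's inequality in the time variable. The implementation differs, though. The paper does not decompose \eqref{formula_dynkin} into chaos pieces: it applies the Bounded Differences Inequality (Proposition \ref{bounded_differences}) to the whole functional $L_nX^n(f)$ at once, using the explicit formula only to check that a single spin flip changes its value by at most $3n^{-1/2}\norm{f}_{\infty}$, which yields a Gaussian tail under $\nu_\rho$ in one line; it then transfers this tail to $\mu^n_r$ via the set version of the entropy inequality, \eqref{entropy2}, obtaining $\mu^n_r(|L_nX^n_r(f)|>\delta)\le K_f/\delta^2$, and converts tails into moments with Lemma \ref{tail_lemma}. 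Your route --- term-by-term subgaussianity via the coloring argument of Section \ref{section_entropy_concentration}, fed into the functional entropy inequality \eqref{entropy1} --- is heavier but sound; the one-shot McDiarmid bound is the cheaper tool here because $L_nX^n(f)$ is a bounded-differences functional irrespective of its polynomial structure. One caveat applies equally to both arguments: they really deliver the moment bound only for $\theta\in(1,2)$, since your exponential moment $\EE_{\nu_\rho}[e^{\gamma|V|^\theta}]$ is not uniformly bounded for $\theta>2$ when $V$ is merely subgaussian (so you should work with $e^{\gamma V^2}$ and then interpolate, which caps $\theta$ at $2$), and the paper's Lemma \ref{tail_lemma} likewise requires $\theta<2$. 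This is harmless, as tightness via Proposition \ref{kolmogorov_centov} only needs a single exponent $\theta>1$.
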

Tightness follows by choosing $ \theta > 1 $ and applying Proposition \ref{kolmogorov_centov}.

\begin{remark}
The main reason for our sticking to the one-dimensional case is that the tightness proof is much shorter. In dimensions $ 2 $ and $ 3 $ the proof we provide in the present section does not work. 
The only strategy we know to estimate the time integrals is to mimic the technique used in the proof of the Boltzmann-Gibbs principle.
\end{remark}

\begin{proof}
	
We start by estimating 
$\nu_{\rho}(L_nX^n(f)>\delta)$, and for that we 
use the Bounded Differences Inequality, 
Proposition \ref{bounded_differences}. Recall 
expression \eqref{formula_dynkin} for 
$L_nX^n_s(f)$. One can check 
\begin{equation}\label{key}
|L_nX^n(f)(\eta^x)-L_nX^n(f)(\eta)| \leq 3n^{-1/2}\norm{f}_{\infty}.
\end{equation}
Applying the Bounded Differences Inequality, we get
\begin{equation}
\log \nu_{\rho}(L_nX^n(f)>\delta) \leq -\frac{2\delta^2}{3\norm{f}_{\infty}^2}.
\end{equation} 
Recall from Theorem \ref{entropy} that the entropy is of order $1$. Plugging the last bound into the entropy inequality \eqref{entropy2} we find $K_f>0$ that depends only on $T$ and on $f$ such that, for all $t\in[0,T]$,
	
	\begin{equation}\label{ivysaur}
	\mathbb \mu^n_t\left(  |L_nX^n_r(f)|
	> \delta  \right) \leq \frac{K_f}{\delta^2}.
	\end{equation}
	
	Let $ \theta >1 $. Applying Lemma \ref{tail_lemma}, we get
	
	\begin{equation}
	\mathbb E_{\nu_{\rho}} [|L_nX^n_t|^{\theta}]\leq K_f^{\theta/2}\mbox{ for all }t\in [0,T].
	\end{equation}
	
	We finish the proof with an application of Jensen's inequality:
	
	\begin{equation}
	\begin{split}
	\mathbb E_{\nu_{\rho}} \left[\Big|\int_s^t L_n X^n_r(f)|\,\mathrm{d}r \Big|^{\theta}\right] &\leq 
	(t-s)^{\theta} \cdot \frac{1}{t-s}\int_s^t \mathbb E_{\nu_{\rho}}[|L_n X^n_r(f)|^{\theta}]\,\mathrm{d}s\\
	&\leq K_f^{\theta/2}\cdot (t-s)^{\theta}.
	\end{split}
	\end{equation}

\end{proof}

\appendix

\section{Computations involving the generator}\label{section_computations_generator}

\begin{lemma}\label{ln*1}
	Let $ L_n^* $ denote the adjoint of the generator $ L_n $ (defined in \eqref{rdgenerator}) in $ L^2(\nu_{\rho}) $. Then 
\begin{equation}\label{formula_ln*1}
	\begin{aligned}
		L_n^*\ind{}(\eta) = & 2\lambda \sum_{j=1}^d\sum_{x\in \TT^d_n} 
		\etta_{x-e_j}\etta_{x}\\
		&+  \frac{\lambda}{\rho}
		\sum_{j=1}^d\sum_{x\in \TT^d_n} 
		\etta_{x-2e_j}\,\etta_{x-e_j}\,\etta_{x}. 
	\end{aligned}
\end{equation}

\begin{proof}
	By reversibility, the exclusion part of the generator is self-adjoint in $ L^2(\nu_{\rho}) $. Therefore, we only need to deal with the birth-and-death part of the generator.
	Applying the explicit formula for the adjoint generator given in Proposition \ref{adjointformula}, we obtain
	\begin{equation}\label{key}
	\begin{aligned}
	L^*_n\ind{}(\eta) & = \sum_{x\in\TT^d_n} \eta_x \lrch{
	c_x^+(\eta)\frac{1-\rho}{\rho}  - c_x^-(\eta)
 }\\
&+ \sum_{x\in\TT^d_n} (1-\eta_x)\lrch{ c_x^-(\eta)\frac{\rho}{1-\rho} - c_x^+(\eta)
}\\
&= \sum_{x\in\TT^d_n} \lrp{ \frac{\eta_x}{\rho} - \frac{1-\eta_x}{1-\rho} } \lrch{ 
c_x^+(\eta) (1-\rho)  - c_x^-(\eta)\rho
}.
	\end{aligned}
	\end{equation}

We would like to write the above expression as a polynomial in the variables $\{ \etta_x := \eta_x - \rho: x\in \TT^d_n \} $. It can be seen from the above expression that the coefficient of the independent term is null. Besides, the assumption $ \int c_x\,\mathrm{d}\nu_\rho =0 $ implies that all terms of degree $ 1 $ vanish. Substituting expressions \eqref{bd_rates} for the birth and death rates, we get 

\begin{equation}\label{key} 
\begin{aligned}
L^*_n\ind{} &=  
\sum_{j=1}^d\sum_{x\in\TT^d_n} \frac{\etta_x}{\rho(1-\rho)}\lrch{ 
	(1 + \lambda \eta_{x - e_j}\eta_{x + e_j}) (1-\rho) - \rho}\\
&=\sum_{j=1}^d\sum_{x\in\TT^d_n} \frac{\etta_x}{\rho}\lrp{ 
	 \lambda\rho\,\etta_{x - e_j} + \lambda\rho\, \etta_{x + e_j} + \lambda\etta_{x-1}\etta_{x + e_j} }.
\end{aligned}
\end{equation}
Using the translation invariance of $ \TT^d_n $, it is straightforward to go from the expression above to the asserted formula \eqref{formula_ln*1}.

\end{proof}

\end{lemma}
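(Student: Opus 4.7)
The plan is first to reduce $L_n^*\ind{} = (L^r)^*\ind{}$. The exclusion generator $L^{ex}$ is self-adjoint in $L^2(\nu_\rho)$ by reversibility of SSEP under any Bernoulli product measure, and it annihilates constants, so $n^2(L^{ex})^*\ind{} = n^2 L^{ex}\ind{} = 0$. The remaining work is then to apply the explicit adjoint formula of Proposition \ref{adjointformula} to the birth-and-death generator $L^r$.

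Since the only transitions of $L^r$ out of $\eta$ are the spin flips $\eta\mapsto\eta^x$ at rate $c_x(\eta)$, Proposition \ref{adjointformula} yields
\begin{equation*}
(L^r)^*\ind{}(\eta) = \sum_{x\in\TT^d_n}\lrch{\frac{\nu_\rho(\eta^x)}{\nu_\rho(\eta)}\,c_x(\eta^x) - c_x(\eta)}.
\end{equation*}
The Radon--Nikodym ratio equals $(1-\rho)/\rho$ or $\rho/(1-\rho)$ according to whether $\eta_x$ is $1$ or $0$, and $c_x^\pm(\eta^x) = c_x^\pm(\eta)$ since these rates depend on $\eta$ only through neighbours of $x$. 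Handling the two cases $\eta_x\in\{0,1\}$ and recombining puts each summand into the form $\frac{\etta_x}{\rho(1-\rho)}\lrch{(1-\rho)c_x^+(\eta) - \rho c_x^-(\eta)}$.

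From here the proof is purely an expansion in the centred variables $\etta_y$. Substituting $c_x^-=1$ and $c_x^+ = 1+\lambda\sum_j\eta_{x-e_j}\eta_{x+e_j}$, the scalar piece of the bracket equals $F(\rho)$ and hence vanishes by hypothesis; this is precisely what kills the would-be degree-one contribution to $(L^r)^*\ind{}$. Expanding the products $\eta_{x\pm e_j} = \etta_{x\pm e_j} + \rho$ and multiplying by the prefactor $\etta_x/(\rho(1-\rho))$ leaves a quadratic piece $\lambda\,\etta_x\sum_j(\etta_{x-e_j}+\etta_{x+e_j})$ and a cubic piece $\frac{\lambda}{\rho}\,\etta_x\sum_j\etta_{x-e_j}\etta_{x+e_j}$. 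Summing over $x\in\TT_n^d$ and invoking torus translation invariance (the shift $x\mapsto x+e_j$ identifies $\sum_x\etta_x\etta_{x+e_j}$ with $\sum_x\etta_{x-e_j}\etta_x$, and the shift $x\mapsto x-e_j$ reindexes the cubic sum into $\sum_x\etta_{x-2e_j}\etta_{x-e_j}\etta_x$) collapses the expression to the announced formula with coefficients $2\lambda$ and $\lambda/\rho$.

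The only obstacle is clerical bookkeeping: one must organise the expansion around $\rho$ so that the stationarity identity $F(\rho)=0$ cleanly removes both the constant and the linear contributions, and then track the index shifts consistently under translation invariance. No further model-specific input is required once $F(\rho)=0$ has been used.
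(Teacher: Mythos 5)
Your proof is correct and follows essentially the same route as the paper: discard the exclusion part by self-adjointness, apply Proposition \ref{adjointformula} to the spin-flip part, recombine the two cases $\eta_x\in\{0,1\}$ into the prefactor $\etta_x/(\rho(1-\rho))$, use $F(\rho)=0$ to remove the constant and degree-one contributions, and reindex by translation invariance to obtain the coefficients $2\lambda$ and $\lambda/\rho$. No gaps.
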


	Recall that the birth and death rates defined by \eqref{birthdeathrate} and  \eqref{bd_rates} satisfy, by assumption, $ \int c_x\,\mathrm{d}\nu_{\rho} =0 $.
\begin{lemma}
 Let $ G:\{0,1\}^{\TT_n} \to \RR $ be a bounded function.  Then the following inequality holds:
	\begin{equation}\label{lemma_quadvar}
	\EE_{\nu_{\rho}}\lrc{\Big|
	\int_0^t \frac{1}{n}\sum_{x\in\TT_n} G_x \,c_x(\eta^n(s)) 
	\,\mathrm{d}s
	\Big|}\leq
	\frac{C\norm{G}_{\infty}}{\sqrt n},
	\end{equation}
	for some constant $ C = C(t) $. The same inequality holds with the mean-zero local function $ (\eta_{x+1}-\eta_x )^2 - \rho(1-\rho) $ in place of $ c_x(\eta) $.
\end{lemma}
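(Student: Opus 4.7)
\emph{Sketch of proof.} The plan is to bound $\EE_{\mu_s^n}[|V_n|]$ pointwise in $s$, where $V_n(\eta) := \frac{1}{n}\sum_{x\in\TT_n} G_x\, c_x(\eta)$ and $\mu_s^n$ denotes the law of $\eta^n(s)$, and then to integrate in $s$. By hypothesis $\int c_x\,d\nu_\rho = 0$, so $V_n$ is centred under $\nu_\rho$. The whole strategy rests on combining a sub-Gaussian deviation bound for $V_n$ under $\nu_\rho$ with the one-dimensional entropy estimate of Theorem~\ref{entropy}.

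For the deviation bound I will use the Bounded Differences Inequality (Proposition~\ref{bounded_differences}). Since every $c_x$ is a bounded finite-range local function and the prefactor is $1/n$, flipping a single coordinate $\eta_y$ affects only $O(1)$ terms of the sum and therefore changes $V_n$ by at most $C\|G\|_\infty/n$. Applying the inequality to $V_n$ and to $-V_n$ under the Bernoulli product measure $\nu_\rho$ will yield
\[
\log \EE_{\nu_\rho}\!\bigl[e^{\gamma |V_n|}\bigr] \;\leq\; \log 2 + \frac{c\,\gamma^2 \|G\|_\infty^2}{n}
\]
for every $\gamma>0$, with $c$ depending only on the range of $c_x$.

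The transfer to the nonequilibrium law is by the entropy inequality~\eqref{entropy1}: for every $\gamma>0$,
\[
\EE_{\mu_s^n}[|V_n|] \;\leq\; \frac{\log 2 + H(\mu_s^n|\nu_\rho)}{\gamma} + \frac{c\,\gamma \|G\|_\infty^2}{n}.
\]
Theorem~\ref{entropy} specialised to $d=1$ with initial law $\nu_\rho$ gives $H(\mu_s^n|\nu_\rho)\leq C_0 s$. Optimising in $\gamma$ produces $\EE_{\mu_s^n}[|V_n|] \leq C\|G\|_\infty\sqrt{1+s}/\sqrt n$. Integrating over $s\in[0,t]$ and invoking Fubini together with the triangle inequality then yields \eqref{lemma_quadvar}. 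The second assertion of the lemma follows by the identical argument applied to the bounded local function $(\eta_{x+1}-\eta_x)^2 - 2\rho(1-\rho)$, which is centred under $\nu_\rho$ (the subtracted constant being this function's mean).

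The main obstacle is really the availability of the entropy bound: without the dimension-one estimate $H(\mu_s^n|\nu_\rho)=O(s)$ provided by Theorem~\ref{entropy}, the transfer term $H/\gamma$ would destroy the $\sqrt n$ gain coming from concentration. The sub-Gaussian step itself is routine once one observes that $\nu_\rho$ is a product measure and that $V_n$ is a $1/n$-Lipschitz function of the Bernoulli coordinates; everything afterwards is standard juggling between entropy and exponential moments.
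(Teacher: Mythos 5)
Your argument is correct and follows the same skeleton as the paper's proof: entropy inequality \eqref{entropy1} against $\nu_\rho$, the $d=1$ entropy estimate of Theorem \ref{entropy} (which makes $H(\mu^n_s|\nu_\rho)$ of order one in $n$), an exponential-moment bound under the product measure with variance proxy $O(\norm{G}_\infty^2/n)$, and the choice $\gamma \sim \sqrt{n}/\norm{G}_\infty$. The one genuine difference is the concentration step: the paper bounds $\log\int \exp\{\pm\frac{\gamma}{n}\sum_x G_x c_x\}\,\mathrm{d}\nu_\rho$ by applying Hoeffding's inequality to each local summand $G_x c_x$ and then handling the finite-range dependence via H\"older's inequality over a sparse partition of the torus (the same device as in Lemmas \ref{conv_fluxo} and \ref{conv_caixa}), whereas you apply the Bounded Differences Inequality once to the global function $V_n$, using that a single spin flip changes $V_n$ by $O(\norm{G}_\infty/n)$. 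Your route is slightly more economical here; the only technical point to flag is that Proposition \ref{bounded_differences} as stated gives a tail bound rather than the moment-generating-function bound you invoke, so you should either cite the MGF form of McDiarmid's inequality (which is how \cite{lugosi}, Theorem 6.2 is proved) or pass from the subgaussian tail to the exponential moment at the cost of a constant. The paper's version, while heavier, is the one that generalizes to the settings of Lemmas \ref{conv_fluxo} and \ref{conv_caixa}, where the summands are weighted by flows and a global Lipschitz bound would be too crude. Two further correct observations on your part: applying the entropy inequality pointwise in $s$ and integrating is equivalent to the paper's application to the time integral, and the centering constant for $(\eta_{x+1}-\eta_x)^2$ is indeed $2\rho(1-\rho)$, not the $\rho(1-\rho)$ appearing in the statement.
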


\begin{proof}
	Denote by $ H_n(s) $ the relative entropy between the law of $ \eta^n_s $ and the product measure $ \nu_\rho $. Let $ \gamma >0 $. Applying the entropy inequality \eqref{entropy1} we can bound the lefthand side of \eqref{lemma_quadvar} by 
	\begin{equation}\label{quadvar_01}
	\frac{1}{\gamma}\int_0^t H_n(s)\,\mathrm{d}s + \frac{t}{\gamma} \log \int e^{ |\frac{\gamma}{n}\sum_{x\in\TT_n} G_x c_x |}\,\mathrm{d}\nu_{\rho}.
	\end{equation}

By Theorem \ref{entropy}, the first term in \eqref{quadvar_01} is bounded by $ Cte^{Ct} $ for some constant $ C $ that does not depend on $ n $.  The second term in \eqref{quadvar_01} is bounded from above by
\begin{equation}\label{key}
	\frac{t}{\gamma} \log \int e^{ \frac{\gamma}{n}\sum_{x\in\TT_n} G_x c_x } + e^{ -\frac{\gamma}{n}\sum_{x\in\TT_n} G_x c_x }\,\mathrm{d}\nu_{\rho}.
\end{equation}

By Hoeffding's inequality, each of the variables $ \tau_x \psi $ is $ \norm{\psi}_{\infty}-$subgaussian. In addition, the family $ \{ \tau_x \psi \}_{x\in \TT_n} $ has a finite-range dependency, say of range $ R$. It is possible to deduce that the previous expression is bounded by $ \frac{t}{\gamma} \lrp{ \log 2 + \frac{\gamma^2 (2R-1)\norm{G}^2_{\infty}}{n} }$. We omit the details because the same argument is used in the proofs of lemmas  \ref{conv_fluxo} and \ref{conv_caixa}. To finish the proof, one can choose $ \gamma = \sqrt n/\norm{G}_\infty $.

\end{proof}

\section{Concentration and entropy inequalities}\label{appendix_concentration}

Let $ \sigma >0 $. We say that a random variable $ X $ is $ \sigma^2 $-subgaussian if $ \EE\lrc{e^{\theta X}}\leq e^{\frac{\theta^2\sigma^2}{2}} $ for all $ \theta \in \RR $.

\begin{proposition}[Properties of subgaussian random variables]
	If $ X $ is a $ \sigma^2 $-subgaussian random variable, then the following inequalities hold:
	\begin{equation}\label{subgaussian_tail}
	\PP\lrp{|X| > \lambda} \leq 2e^{-\frac{\lambda^2}{2\sigma^2}}\mbox{ for all }\lambda >0
	\end{equation}
	and
\begin{equation}\label{chisquare_expmom}
\EE\lrc{ e^{cX^2} } \leq e^{8c\sigma^2}\mbox{ for all $ c \in (0, (4\sigma^2)^{-1} ] $.}
\end{equation}
\end{proposition}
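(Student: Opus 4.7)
The plan is to prove both inequalities by classical Chernoff/Markov arguments, starting from the subgaussian moment generating function bound $\EE[e^{\theta X}] \leq e^{\theta^2 \sigma^2/2}$.

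For the tail bound \eqref{subgaussian_tail}, I would apply the exponential Markov inequality for each $\theta > 0$:
\begin{equation}
\PP(X > \lambda) \leq e^{-\theta\lambda}\,\EE[e^{\theta X}] \leq \exp\!\lrp{-\theta\lambda + \tfrac{\theta^2\sigma^2}{2}},
\end{equation}
and then optimize the right-hand side by taking $\theta = \lambda/\sigma^2$, which yields $\PP(X > \lambda) \leq e^{-\lambda^2/(2\sigma^2)}$. The same argument with $-X$ (which is also $\sigma^2$-subgaussian, since the defining inequality is symmetric in $\theta$) gives the matching bound for $\PP(X < -\lambda)$. Adding the two estimates produces the factor $2$ in \eqref{subgaussian_tail}.

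For the Gaussian chi-square-type moment bound \eqref{chisquare_expmom}, the strategy is the layer-cake (tail integration) representation followed by the bound just obtained. Writing
\begin{equation}
\EE\lrc{e^{cX^2}} = 1 + \int_0^{\infty} 2ct\, e^{ct^2}\, \PP(|X| > t)\,\mathrm{d}t
\end{equation}
and plugging in \eqref{subgaussian_tail}, the integral is dominated by
\begin{equation}
4c\int_0^\infty t \exp\!\lrp{-t^2\lrp{\tfrac{1}{2\sigma^2} - c}}\,\mathrm{d}t = \frac{2c}{\tfrac{1}{2\sigma^2}-c},
\end{equation}
provided $c < 1/(2\sigma^2)$. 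For $c \leq 1/(4\sigma^2)$ the denominator is at least $1/(4\sigma^2)$, so the integral is bounded by $8c\sigma^2$, and one concludes
\begin{equation}
\EE[e^{cX^2}] \leq 1 + 8c\sigma^2 \leq e^{8c\sigma^2},
\end{equation}
using $1+x\leq e^x$.

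Neither step has a real obstacle; both are textbook. The only thing requiring care is bookkeeping of constants so that the final bound matches \eqref{chisquare_expmom} exactly: specifically, the restriction $c \leq 1/(4\sigma^2)$ is precisely what is needed to keep $1/(2\sigma^2) - c$ bounded below by $1/(4\sigma^2)$, which in turn is what produces the coefficient $8$ in the exponent. A slightly sharper threshold $c < 1/(2\sigma^2)$ would still give finite integral, but would not produce the clean constant stated here.
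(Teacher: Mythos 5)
Your proposal is correct and follows essentially the same route as the paper: a Chernoff bound optimized at $\theta=\lambda/\sigma^2$ applied to both $X$ and $-X$ for \eqref{subgaussian_tail}, then the layer-cake representation combined with that tail bound, the restriction $c\leq (4\sigma^2)^{-1}$ to control the denominator, and $1+x\leq e^x$ for \eqref{chisquare_expmom}. The constant bookkeeping matches the paper's as well, so there is nothing to add.
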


\begin{proof}
	The first assertion uses Markov's inequality. For any $\theta > 0$,
	\begin{equation}
	\log P(X>\lambda)\leq \frac{\theta^2\sigma^2}{2}-\theta \lambda.
	\end{equation}
	
The expression on the righthand side attains its minimum at $\theta = \frac{\lambda}{\sigma^2}$, where it takes the value $-\lambda^2/2\sigma^2$. This computation shows that $ P(X>\lambda) \leq e^{-\frac{\lambda^2}{2\sigma^2} } $. In the same manner, one can show that $ P(-X>\lambda) \leq e^{-\frac{\lambda^2}{2\sigma^2} } $, thus obtaining \eqref{subgaussian_tail}.
	
For the second inequality, let $ c>0 $. Then
	
	\begin{equation}
	\begin{split}
	\EE[e^{c X^2}] & = 1 + \int_0^\infty 2c \, u \, e^{c u^2}\PP(|X|\geq u)\,du\\
	& \leq 1 + \int_0^\infty 4  c\, u \,  e^{-u^2 \lrp{\frac{1}{2\sigma^2} -c }  }\,du.
	\end{split}
	\end{equation}
When $ c \geq(2\sigma^2)^{-1} $, the integral above is infinite. When $ c < (2\sigma^2)^{-1} $, the integral can be computed explicitly. Assuming, as in the hypothesis, that $ c \leq (4\sigma^2)^{-1} $, we arrive at
\begin{equation}\label{key}
\begin{aligned}
\EE \lrc{ e^{c X^2} } & \leq 1 + \ \frac{2c}{\frac{1}{2\sigma^2} - c }\\
& \leq 1 + 8c\sigma^2 .
\end{aligned}
\end{equation}
An application of the inequality $ 1+ x \leq e^x $ then leads to \eqref{chisquare_expmom}.
	
\end{proof}

\begin{proposition}[Hoeffding's Inequality, \cite{lugosi}, Lemma 2.2]\label{hoeffding}
	Let $X$ be a mean zero random variable taking values in the interval $[a,b]$. Then $ X $ is $ \frac{(b-a)^2}{4} $-subgaussian.
%
%
\end{proposition}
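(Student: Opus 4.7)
The plan is to follow the classical convexity-plus-Taylor argument for Hoeffding's lemma. Since the exponential is convex, for any $x\in[a,b]$ one may write $x=\tfrac{b-x}{b-a}\,a+\tfrac{x-a}{b-a}\,b$, which gives the pointwise bound
\begin{equation}
e^{\theta x}\;\leq\;\frac{b-x}{b-a}\,e^{\theta a}+\frac{x-a}{b-a}\,e^{\theta b}.
\end{equation}
Taking expectations and using $\mathbb{E}[X]=0$, the linear coefficients collapse and I obtain
\begin{equation}
\mathbb{E}\lrc{e^{\theta X}}\;\leq\;\frac{b}{b-a}\,e^{\theta a}-\frac{a}{b-a}\,e^{\theta b}.
\end{equation}
Note that this already forces $a\leq 0\leq b$, consistent with $\mathbb{E}[X]=0$.

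Next I would set $p:=-a/(b-a)\in[0,1]$ and $u:=\theta(b-a)$, so that the upper bound factors as $e^{-pu}\lrp{1-p+p\,e^{u}}$, and define $\phi(u):=-pu+\log(1-p+p\,e^{u})$. A direct computation gives $\phi(0)=0$ and $\phi'(0)=0$; furthermore
\begin{equation}
\phi''(u)\;=\;\frac{p(1-p)\,e^{u}}{(1-p+p\,e^{u})^2}\;=\;q(1-q),
\end{equation}
where $q:=p e^{u}/(1-p+p\,e^{u})\in[0,1]$. Since $q(1-q)\leq 1/4$, a second-order Taylor expansion with remainder yields $\phi(u)\leq u^2/8$, hence
\begin{equation}
\mathbb{E}\lrc{e^{\theta X}}\;\leq\;e^{\theta^2(b-a)^2/8},
\end{equation}
which is exactly the $(b-a)^2/4$-subgaussian bound.

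There is essentially no obstacle; the only thing to be careful about is the sharp constant $1/4$ in the bound $q(1-q)\leq 1/4$, which is what produces the $(b-a)^2/4$ parameter (rather than a looser constant). The convexity step and the Taylor-with-remainder step are both routine, and the cases $\theta=0$, $a=b$ are trivial and can be disposed of separately.
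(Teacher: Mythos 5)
Your argument is correct and is precisely the classical convexity-plus-Taylor proof of Hoeffding's lemma; the paper itself gives no proof and simply cites \cite{lugosi}, Lemma 2.2, where this same argument appears. All steps check out, including the translation into the paper's convention that $\sigma^2$-subgaussian means $\EE[e^{\theta X}]\leq e^{\theta^2\sigma^2/2}$, so the bound $e^{\theta^2(b-a)^2/8}$ gives exactly the parameter $(b-a)^2/4$.
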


%
%
%
%
%
%

\begin{proposition}[Bounded Differences Inequality, \cite{lugosi}, Theorem 6.2]\label{bounded_differences}
	Assume the function $f:\{0,1\}^{\TT_n}\to \RR$ satisfies
	\begin{equation}
	|f(\eta^x)-f(\eta)|\leq c_x
	\end{equation}
	for a family of constants $\{c_x: x\in \TT^d_n\}$. Then 
	\begin{equation}
	\log \nu_\rho \left(f(\eta)-\int f \,\mathrm{d}\nu_{\rho} > \delta \right) \leq -\frac{2 \delta^2}{\sum_{x\in \TT^d_n}c_x^2}.
	\end{equation}
\end{proposition}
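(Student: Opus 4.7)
The plan is to establish this by the classical Doob-martingale argument due to McDiarmid. Enumerate the sites of the torus as $x_1,\ldots,x_N$ with $N=n^d$, and let $\mathcal{F}_k:=\sigma(\eta_{x_1},\ldots,\eta_{x_k})$, with $\mathcal{F}_0$ the trivial $\sigma$-algebra. Define the Doob martingale $Y_k:=\EE_{\nu_\rho}[f(\eta)\mid \mathcal{F}_k]$, so that $Y_0=\int f\,\mathrm{d}\nu_\rho$ and $Y_N=f(\eta)$. Then
\begin{equation}
f(\eta)-\int f\,\mathrm{d}\nu_\rho \;=\; \sum_{k=1}^{N}(Y_k-Y_{k-1}),
\end{equation}
so it suffices to control the moment generating function of this martingale sum.

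The key step is to show that the differences $D_k:=Y_k-Y_{k-1}$ take values in an interval of length at most $c_{x_k}$, conditionally on $\mathcal{F}_{k-1}$. Here the product structure of $\nu_\rho$ enters decisively. Setting $g_k(a_1,\ldots,a_k):=\EE_{\nu_\rho}[f(\eta)\mid \eta_{x_j}=a_j,\, j\le k]$, independence gives $Y_k=g_k(\eta_{x_1},\ldots,\eta_{x_k})$ and
\begin{equation}
Y_{k-1}\;=\;\rho\,g_k(\eta_{x_1},\ldots,\eta_{x_{k-1}},1)+(1-\rho)\,g_k(\eta_{x_1},\ldots,\eta_{x_{k-1}},0).
\end{equation}
Averaging the hypothesis $|f(\eta^{x_k})-f(\eta)|\le c_{x_k}$ over the remaining coordinates yields $|g_k(\cdot,1)-g_k(\cdot,0)|\le c_{x_k}$, so $D_k$ takes at most two values (depending on $\eta_{x_k}\in\{0,1\}$) lying in an interval of length $c_{x_k}$, with conditional mean zero.

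Hoeffding's lemma (Proposition \ref{hoeffding}), applied conditionally on $\mathcal{F}_{k-1}$, then gives $\EE_{\nu_\rho}[e^{\theta D_k}\mid \mathcal{F}_{k-1}]\le \exp(\theta^2 c_{x_k}^2/8)$ for every $\theta\in\RR$. Iterating the tower property from $k=N$ down to $k=1$, I obtain
\begin{equation}
\EE_{\nu_\rho}\!\lrc{\exp\!\lrp{\theta\lrp{f-\int f\,\mathrm{d}\nu_\rho}}}\;\le\;\exp\!\lrp{\frac{\theta^2}{8}\sum_{x\in\TT_n^d}c_x^2},
\end{equation}
which shows that $f-\int f\,\mathrm{d}\nu_\rho$ is $\bigl(\tfrac14\sum c_x^2\bigr)$-subgaussian. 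The exponential Markov inequality followed by optimization at $\theta=4\delta/\sum c_x^2$ produces the stated bound.

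The only nontrivial point is the propagation of the bounded-differences hypothesis through the conditional averaging, which crucially uses that $\nu_\rho$ is a product measure; without this, the martingale differences would not remain bounded by $c_{x_k}$ and one would have to resort to logarithmic Sobolev or transport-cost techniques. Everything else is a routine application of Hoeffding's lemma and Markov's inequality.
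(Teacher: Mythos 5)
Your proof is correct. The paper does not prove this proposition at all --- it imports it directly from Boucheron--Lugosi--Massart (Theorem 6.2) --- so there is nothing internal to compare against; what you supply is the standard Doob-martingale (McDiarmid/Azuma--Hoeffding) argument, and it is complete: the product structure of $\nu_\rho$ correctly yields $|g_k(\cdot,1)-g_k(\cdot,0)|\le c_{x_k}$, the conditional increments $(1-\rho)\Delta$ and $-\rho\Delta$ indeed form a mean-zero two-point law in an interval of length at most $c_{x_k}$, so the conditional application of Hoeffding's lemma (the paper's Proposition \ref{hoeffding}) and the optimization at $\theta=4\delta/\sum_x c_x^2$ give exactly the stated exponent $-2\delta^2/\sum_x c_x^2$.
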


\begin{lemma}[Partitioning the torus into $ k $-sparse sets]\label{torus_coloring}
	Let $ k\in \{1,\ldots, n\} $. There exists a partition $ \TT^d_n =\sqcup_{i\in \mathcal I} B_i $ such that each $ B_i $ is $ k $-sparse, meaning that if $ x,y \in B_i $ and $ x \neq y $ then $ \max_{j=1}^d |x_j-y_j| \geq k $. In addition, $ |\mathcal I| \leq (2k-1)^d $.
\end{lemma}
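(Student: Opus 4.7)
The plan is to view the problem as a graph coloring. Introduce the conflict graph $G$ whose vertex set is $\TT_n^d$ and where $x$ and $y$ are joined by an edge if and only if $x \neq y$ and $\max_{j} |x_j - y_j| < k$. A partition of $\TT_n^d$ into $k$-sparse subsets is exactly the same data as a proper vertex coloring of $G$: each color class becomes one of the $B_i$, and conversely. So it suffices to exhibit a proper coloring of $G$ using at most $(2k-1)^d$ colors.

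The main step is a trivial degree bound on $G$. For any fixed vertex $x$, its neighborhood in $G$ is contained in the $L^\infty$ closed ball of radius $k-1$ around $x$, minus $x$ itself. When $n \geq 2k-1$, this ball has at most $(2k-1)^d$ points of $\TT_n^d$; when $n < 2k-1$ the ball is contained in the whole torus, which has $n^d \leq (2k-1)^d$ points. In either case $\Delta(G) \leq (2k-1)^d - 1$.

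It now suffices to invoke the standard greedy coloring algorithm: enumerate the vertices of $\TT_n^d$ in any order and assign each the smallest positive integer not already used by any previously colored neighbor. Since every vertex has at most $(2k-1)^d - 1$ neighbors, this procedure never needs more than $(2k-1)^d$ colors. Take $\mathcal I$ to be the set of colors actually used and let $B_i$ be the $i$-th color class; the resulting partition is $k$-sparse by construction and satisfies $|\mathcal I| \leq (2k-1)^d$.

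I expect no substantive obstacle. The only minor bookkeeping concern is the convention for $|x_j - y_j|$ on the torus (torus distance, or integer distance after choosing a representative in $\{0,\ldots,n-1\}$); but in either interpretation the $L^\infty$-ball of radius $k-1$ has at most $(2k-1)^d$ points, so the degree bound and hence the argument are unchanged.
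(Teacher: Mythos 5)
Your proof is correct, but it takes a different route from the paper's. You recast the statement as a proper coloring of the conflict graph in which $x\sim y$ when $\max_j|x_j-y_j|<k$, bound the maximum degree by $(2k-1)^d-1$ via the size of the closed $L^\infty$-ball of radius $k-1$, and conclude with the standard greedy bound $\chi(G)\le \Delta(G)+1$. The paper instead constructs the partition explicitly by induction on the dimension: in $d=1$ it writes $n=mk+r$ and takes the $k$ arithmetic progressions $\{j,k+j,\ldots,(m-1)k+j\}$ together with $r$ singletons (hence at most $2k-1$ classes, the wrap-around gap being $k+r\ge k$), and for $d>1$ it takes the product of lower-dimensional partitions, giving at most $(2k-1)^d$ classes. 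Both arguments yield exactly the bound $(2k-1)^d$, which is all that is used downstream (only the cardinality of $\mathcal I$ enters the H\"older step in Lemma \ref{conv_fluxo}). Your greedy argument is shorter and avoids both the explicit $1$-dimensional bookkeeping and the induction, at the price of producing an unstructured, ordering-dependent partition; the paper's construction is fully explicit. Your closing remark on the distance convention is apt: the relevant notion for the application is the torus ($L^\infty$) distance, since that is what governs independence of $\xi_x$ and $\xi_y$, and your degree bound holds verbatim in that metric.
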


\begin{remark}
	The necessity to prove this technical lemma is the only complication brought about by our working with periodic boundary conditions. In all other parts of the proof, the assumption that the particles move on a torus helps to simplify the computations. 
\end{remark}

\begin{proof}
The proof is by induction on the dimension $ d $. When $ d=1 $ it is easy to write down the sets of the partition. Let $ n = mk + r $ where $ r\in \{0,1,\ldots, k-1 \} $ and $ m $ is a positive integer. Then $ \TT_n $ is the disjoint union of the $ k $ sets $ B_0, B_1, \ldots, B_{k-1} $, where $ B_j:= \{j, k+j, 2k+j, \ldots, (m-1)k + j \} $, and the $ r $ singletons $  \{mk + i \} \}  $ with $ 0\leq i \leq r-1$. By construction, each of the sets is $ k-$sparse. The number of sets in the partition is $ k + r \leq 2k-1$.

Now let $ d>1 $. Assume there exist partitions $ \TT^{d-1}_{n} = \sqcup_{i\in \mathcal I}B_i $ and $ \TT_n = \sqcup_{i'\in \mathcal I'}B_{i'} $, with $ |\mathcal I| \leq (2k-1)^{d-1} $ and $ |\mathcal I' | \leq 2k-1  $ of the torus into $ k-$sparse sets. Then the product partition $ \TT^{d}_n = \sqcup_{(i,i')\in \mathcal I \times \mathcal I'} B_i \times B_{i'} $ has at most $ (2k-1)^d $ sets and each of these sets is $ k-$sparse.
\end{proof}

\begin{lemma}\label{tail_lemma}
	Let $ X $ be a nonnegative random variable. Assume that 
	\begin{equation}
	\PP(|X|>\delta) \leq C/\delta^2
	\end{equation}
	for any $ \delta > 0 $. Then, for any $ \theta \in (0,2) $, there exists an universal constant $ C(\theta) $ such that $ \EE[|X|^\theta] \leq C(\theta)\cdot C^{\theta/2} $.
\end{lemma}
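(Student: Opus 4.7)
The plan is to use the layer-cake representation
\begin{equation}
\EE[|X|^\theta] = \int_0^\infty \theta t^{\theta-1}\,\PP(|X| > t)\,\mathrm{d}t,
\end{equation}
which is valid for any nonnegative random variable and $\theta > 0$, and then to split the integral at a threshold chosen so that the two bounds we have available on $\PP(|X|>t)$ are balanced.

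More precisely, I would pick the threshold $t_0 := \sqrt{C}$. For $t \leq t_0$, I would use only the trivial bound $\PP(|X|>t)\leq 1$, giving a contribution
$\int_0^{\sqrt C} \theta t^{\theta-1}\,\mathrm{d}t = C^{\theta/2}$.
For $t > t_0$, I would use the hypothesis $\PP(|X|>t)\leq C/t^2$, giving
$\int_{\sqrt C}^{\infty} \theta t^{\theta-1} \cdot \frac{C}{t^2}\,\mathrm{d}t
= \theta C \int_{\sqrt C}^{\infty} t^{\theta - 3}\,\mathrm{d}t
= \frac{\theta}{2-\theta}\, C^{\theta/2}$,
where convergence of the integral at infinity uses precisely the assumption $\theta < 2$.

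Adding the two pieces gives $\EE[|X|^\theta] \leq \frac{2}{2-\theta}\,C^{\theta/2}$, so one can take $C(\theta) := \frac{2}{2-\theta}$. There is no real obstacle here: the only place where the hypothesis $\theta \in (0,2)$ is used is to ensure the second integral converges at $+\infty$, and if $\theta$ were allowed to reach $2$ the bound would blow up, which is consistent with the fact that an inverse-square tail is exactly on the borderline of having a finite second moment.
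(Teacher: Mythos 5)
Your proposal is correct and is essentially identical to the paper's proof: both use the layer-cake formula, bound the tail by $1$ below the threshold $\sqrt{C}$ and by $C/t^2$ above it, and arrive at the same constant $1+\tfrac{\theta}{2-\theta}=\tfrac{2}{2-\theta}$. No issues.
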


\begin{proof}
	Fix $\varepsilon >0$. Then
	
	\begin{equation*}
	\begin{split}
	\EE[X^{\theta}]
	& = \int_0^\infty \theta \delta^{\theta -
		1}\PP(X >\delta)\,\mathrm{d}\delta \\
	& \leq \varepsilon^\theta + \int_\varepsilon^\infty \theta
	C\delta^{\theta - 3}\,\mathrm{d}\delta \\
	& = \varepsilon^\theta +
	C\frac{\theta}{2-\theta}\varepsilon^{\theta - 2}.
	\end{split}
	\end{equation*}
	
	Choosing $\varepsilon = C^{1/2}$ we get $\EE[X^\theta]\leq (1+\theta/(2-\theta))C^{\theta/2}$.
\end{proof}

\begin{proposition}[\cite{kl}, Proposition A.8.2] Let $\mu$ and $\nu$ be probability measures on some finite set $\Omega$. Let $f:\Omega\to\RR$ be a function and $H(\mu|\nu)$ the relative entropy between $\mu$ and $\nu$. Then, for all $\gamma >0$,
	\begin{equation}\label{entropy1}
	\int f\,\mathrm{d}\mu \leq \frac{1}{\gamma}H(\mu|\nu) + \frac{1}{\gamma}\log \int e^{\gamma f}\,\mathrm{d}\nu
	\end{equation}
	and
	\begin{equation}\label{entropy2}
	\mu(A)\leq \frac{H(\mu|\nu)+\log 2}{\log\lrp{1+ \frac{1}{\nu(A)}}}.
	\end{equation}
\end{proposition}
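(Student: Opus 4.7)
The plan is to derive \eqref{entropy1} as a specialization of the Donsker–Varadhan variational formula and then obtain \eqref{entropy2} by applying \eqref{entropy1} to an indicator function and optimizing the free parameter $\gamma$.

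For \eqref{entropy1}, I would first establish the Gibbs/variational inequality
\begin{equation*}
\int f\,\mathrm{d}\mu \leq H(\mu|\nu) + \log \int e^{f}\,\mathrm{d}\nu
\end{equation*}
for an arbitrary $f:\Omega\to\RR$. The cleanest route is to introduce the tilted probability measure $\tilde\nu$ with density $\frac{d\tilde\nu}{d\nu} = \frac{e^f}{Z}$, where $Z:=\int e^{f}\,\mathrm{d}\nu$. A direct manipulation of logarithms gives the identity
\begin{equation*}
\int f\,\mathrm{d}\mu - \log Z = \int \log\frac{d\tilde\nu}{d\nu}\,\mathrm{d}\mu = H(\mu|\nu) - H(\mu|\tilde\nu),
\end{equation*}
and since relative entropy is nonnegative, $H(\mu|\tilde\nu)\geq 0$, the variational inequality follows. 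Replacing $f$ by $\gamma f$ and dividing by $\gamma>0$ yields \eqref{entropy1}.

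For \eqref{entropy2}, I would apply \eqref{entropy1} with $f = \mathbf{1}_A$ and an arbitrary $\gamma>0$ to obtain
\begin{equation*}
\mu(A) \leq \frac{1}{\gamma}H(\mu|\nu) + \frac{1}{\gamma}\log\bigl(1+(e^\gamma-1)\nu(A)\bigr).
\end{equation*}
The natural choice is $\gamma = \log\bigl(1 + 1/\nu(A)\bigr)$, which makes $(e^\gamma-1)\nu(A)=1$, so the logarithmic term simplifies to $\log 2$; this delivers \eqref{entropy2}.

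There is no real obstacle here: both statements are textbook consequences of the nonnegativity of the relative entropy. The only subtle point to get right is the (trivial) algebraic identity linking $H(\mu|\nu)$, $H(\mu|\tilde\nu)$ and $\log Z$, and afterwards the optimization in the choice of $\gamma$ in the second inequality, which is mechanical.
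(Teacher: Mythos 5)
Your proof is correct and follows the standard route: the paper itself does not prove this proposition but cites \cite{kl}, Proposition A.8.2, whose argument is exactly the one you give — the Gibbs/Donsker--Varadhan variational inequality (nonnegativity of $H(\mu|\tilde\nu)$ for the tilted measure) for \eqref{entropy1}, followed by the choice $f=\mathbf{1}_A$ and $\gamma=\log(1+1/\nu(A))$ for \eqref{entropy2}. The only degenerate case, $\nu(A)=0$, is immediate since then $\mu(A)=0$ whenever the entropy is finite.
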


\bibliographystyle{alpha}

\end{document}